\renewcommand\emptyset{\varnothing}
\newtheorem{theorem}{Theorem}[section]
\newtheorem{lemma}[theorem]{Lemma}
\newtheorem{corollary}[theorem]{Corollary}
\newtheorem{cor}[theorem]{Corollary}
\newtheorem{proposition}[theorem]{Proposition}
\newtheorem{prop}[theorem]{Proposition}
\theoremstyle{definition}
\theoremstyle{remark}
\newtheorem{example}[theorem]{Example}
\newtheorem{examples}[theorem]{Examples}
\newtheorem{remarks}[theorem]{Remarks}
\newtheorem{remark}[theorem]{Remark}
\newtheorem*{example-unnumbered}{Example}
\newtheorem*{claim}{Claim}
  \def\Q{\mathbb{Q}} \def\F{\mathbb{F}} \def\Z{\mathbb{Z}}  
\def\N{\mathbb{N}}  \def\l{\lambda} \def\ll{\langle} \def\rr{\rangle}
 \def\a{\alpha}   \def\bp{\begin{pmatrix}}
\def\sm{\setminus} \def\ep{\end{pmatrix}} \def\bn{\begin{enumerate}} 
   \def\en{\end{enumerate}}
\def\ba{\begin{array}} \def\ea{\end{array}}  
   \def\a{\alpha} \def\b{\beta} \def\ti{\widetilde}
\def\id{\operatorname{id}}  \def\im{\operatorname{Im}} 
\def\ker{\operatorname{Ker}}\def\be{\begin{equation}} \def\ee{\end{equation}} 
 \def\hom{\operatorname{Hom}}  
\def\aut{\operatorname{Aut}}  
 \def\dim{\operatorname{dim}}
    \def\fr12{\frac{1}{2}} \def\z12{\Z[\fr12]}
\def\ol{\overline}
\def\tipi{\ti{\pi}}
\renewcommand\epsilon{\varepsilon}
\title{Residual properties of graph manifold groups}
\dedicatory{To the memory of Bernard Perron \textup{(}1944--2008\textup{)}.}
\author{Matthias Aschenbrenner}
\address{University of California, Los Angeles, California, USA}
\email{matthias@math.ucla.edu}
\thanks{The first author was partially supported by National Science Foundation grant DMS-0556197.}
\author{Stefan Friedl}
\address{University of Warwick, Coventry, UK}
\email{s.k.friedl@warwick.ac.uk}
\numberwithin{equation}{section}
\date{April 20, 2010}
\begin{document}

\begin{abstract}
Let $f\colon M\to N$ be a continuous map between closed irreducible graph manifolds with infinite fundamental group. Perron and Shalen \cite{PS99} showed that
if $f$ induces a homology equivalence on all finite covers, then $f$ is in fact homotopic to a homeomorphism. Their proof used the statement that
 every graph manifold is finitely covered by a $3$-manifold  whose fundamental group is residually $p$ for every prime $p$.
We will show that this statement regarding  graph manifold groups is not true in general, but we will show how to modify the argument
of Perron and Shalen to recover their main result.
As a by-product we will determine all semidirect products $\Z \ltimes \Z^d$ which are residually $p$ for every prime $p$. 
\end{abstract}

\maketitle

\section{Introduction}


\noindent
We say that a group is a $p$-group if it is finite of order a power of $p$. (Here and in the rest of the paper, $p$ will denote a prime number.)
We say that a group $G$ is \emph{residually $p$} if for any non-trivial $g\in G$ there exists
a morphism $\a\colon G\to P$ to a $p$-group $P$ such that $\a(g)$ is non-trivial.
We say that  $G$ is \emph{virtually residually $p$} if there exists a finite index subgroup of $G$ which is residually $p$.
It is not restrictive to demand that this finite index subgroup is normal in $G$. (See Section~\ref{section:resp} below.)

Given an automorphism $\varphi\colon\Z^d\to \Z^d$ consider
the semidirect product $\Z\ltimes_\varphi \Z^d$ where $1\in \Z$ acts on $\Z^d$ by $\varphi$. (This is sometimes called the mapping torus of $\varphi$.) Recall that the underlying set of  $\Z\ltimes_\varphi \Z^d$ is the cartesian product $\Z\times \Z^d$ and the group operation is given by $$(i,x)\cdot (j,y)=(i+j,x+\varphi^i(y)) \qquad (i,j\in\Z,\ x,y\in\Z^d).$$
We first give a criterion for  $\Z \ltimes_\varphi \Z^d$ to be residually $p$.
Recall that $\varphi$ is  said to be \emph{unipotent} if $(\varphi-\id)^d=0$.
By considering the Jordan normal form of $\varphi$ one can easily show that  $\varphi$ is unipotent if and only if $\varphi$ has the single eigenvalue $1$. Finally recall that $\varphi$ is \emph{quasi-unipotent}
if some power $\varphi^k$ ($k>0$) of $\varphi$ is unipotent; equivalently, if all eigenvalues of $\varphi$ are roots of unity.

\medskip

The following theorem (shown in Section~\ref{sec:residual properties} of this paper) gives the complete picture of residually $p$ properties of mapping tori of automorphisms of $\Z^d$:

\begin{theorem}\label{thm:residual properties of mapping tori}\label{mainthm}
Let $\varphi$ be an automorphism of $\Z^d$, and let $G=\Z\ltimes_\varphi \Z^d$. 
Denote by $P_\varphi(t)\in \Z[t]$ the characteristic polynomial of $\varphi$. 
Then the following hold:
\begin{enumerate}
\item $G$
is residually $p$ if and only if for every irreducible factor $P(t)\in \Z[t]$ of $P_\varphi(t)$ we have $P(1) \equiv 0 \bmod p$;
\item  $G$
is residually $p$ for all $p$ if and only if $G$ is nilpotent, if and only if
$\varphi$ is unipotent;
\item  $G$
has a finite index subgroup which is residually $p$ for all $p$ if and only if
$G$ is virtually nilpotent, if and only if
$\varphi$ is quasi-unipotent.
\end{enumerate}
\end{theorem}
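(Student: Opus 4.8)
The plan is to establish (1) in full and then read off (2) and (3) with short arguments. Write $N=\{0\}\times\Z^d\trianglelefteq G$, so $G/N\cong\Z$. Every element $(i,x)$ with $i\neq 0$ is separated from $1$ by the composite $G\to G/N\cong\Z\to\Z/p^K\Z$ with $p^K\nmid i$, so $G$ is residually $p$ iff every nonzero $x\in\Z^d$ is separated from $1$ by some homomorphism to a finite $p$-group. To analyse those homomorphisms I would note: if $\alpha\colon G\to P$ has $p$-power image then $\alpha(N)$ is a finite abelian $p$-group on which $\varphi$ acts (by conjugation by $\alpha(t)$) with order a power of $p$, since $\alpha(t)$ does; and conversely, for any $\varphi$-invariant $K\leq\Z^d$ with $\Z^d/K$ a finite abelian $p$-group on which $\varphi^{p^m}=1$, the group $\Z/p^m\Z\ltimes_{\bar\varphi}(\Z^d/K)$ is a finite $p$-group onto which $G$ maps with $N\mapsto\Z^d/K$. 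Such $K$ are exactly the subgroups containing $K_{m,k}:=(\varphi^{p^m}-1)\Z^d+p^k\Z^d$ for some $m,k\ge 0$, and each $K_{m,k}$ is one of them; hence $G$ is residually $p$ iff $\bigcap_{m,k}K_{m,k}=0$, and performing the intersection over $k$ first (which yields the $p$-adic closure of $(\varphi^{p^m}-1)\Z^d$ in $\Z^d$, equal to $\Z^d\cap(\varphi^{p^m}-1)\Z_p^d$) this becomes
\[ \Z^d\cap\bigcap_{m\ge 0}(\varphi^{p^m}-1)\Z_p^d=0 \]
inside $\Z_p^d$.

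Next I would compute the $\Z_p$-side intersection. Factor $P_\varphi\equiv\prod_i\bar f_i^{\,e_i}\bmod p$ into powers of distinct monic irreducibles $\bar f_i\in\F_p[t]$, lift by Hensel to pairwise coprime $g_i\in\Z_p[t]$ with $g_i\equiv\bar f_i^{\,e_i}$, and split $\Z_p^d=\bigoplus_iM_i$ with $M_i=\ker g_i(\varphi)$, each $\varphi$-invariant. On the component $M_{i_0}$ with $\bar f_{i_0}=t-1$ the operator $\varphi$ is unipotent modulo $p$, and since $p\mid\binom{p^m}{j}$ for $0<j<p^m$ one gets that $(\varphi^{p^m}-1)M_{i_0}$ sits inside ever higher powers of $p$ times $M_{i_0}$ as $m\to\infty$, so $\bigcap_m(\varphi^{p^m}-1)M_{i_0}=0$; on every other component $M_i$ the matrix $\varphi^{p^m}-1$ is invertible over $\Z_p$ for all $m$, because modulo $p$ its eigenvalues are $\mu^{p^m}-1$ with $\mu$ a root of $\bar f_i$ and $\mu^{p^m}$ again a root of $\bar f_i\neq t-1$, hence $\neq 1$; so $(\varphi^{p^m}-1)M_i=M_i$. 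Thus $\bigcap_m(\varphi^{p^m}-1)\Z_p^d=W:=\bigoplus_{\bar f_i\neq t-1}M_i$, and what remains is to decide when $\Z^d\cap W=0$.

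This last point is the technical heart and, I expect, the main obstacle. After tensoring with $\Q$ it suffices to decide when $\Q^d\cap(W\otimes\Q_p)=0$, and I would use two inputs: (a) the reduction mod $p$ of a monic irreducible of $\Z_p[t]$ is a power of a single irreducible of $\F_p[t]$ — otherwise Hensel's lemma would lift a coprime factorisation of the reduction — so that, writing $P_\varphi=\prod_hh^{c(h)}$ over $\Q_p$ with $\Q_p^d=\bigoplus_hV_h$ the associated $\varphi$-primary decomposition, one has $W\otimes\Q_p=\bigoplus_{h:\bar h(1)\neq 0}V_h$; and (b) as a $\Q[\varphi]$-module $\Q^d=\bigoplus_\ell U_\ell$, where $U_\ell$ is the primary component attached to the $\Q$-irreducible factor $P_\ell$ of $P_\varphi$, and any nonzero $\Q[\varphi]$-submodule of $U_\ell$ has nonzero image in every $V_h$ with $h\mid P_\ell$. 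Since a submodule of $\bigoplus_\ell U_\ell$ is the direct sum of its components in the $U_\ell$, one gets $\Q^d\cap(W\otimes\Q_p)=\bigoplus_\ell\bigl(U_\ell\cap(W\otimes\Q_p)\bigr)$, with $U_\ell\cap(W\otimes\Q_p)=0$ exactly when some $h\mid P_\ell$ has $\bar h(1)=0$ — equivalently, the $\Q_p$-factors of the separable $P_\ell$ being distinct, exactly when $\bar P_\ell(1)=0$, i.e. $P_\ell(1)\equiv 0\bmod p$ — whereas if every $h\mid P_\ell$ has $\bar h(1)\neq 0$, i.e. $P_\ell(1)\not\equiv 0\bmod p$, then all of $U_\ell$ lies in $W\otimes\Q_p$. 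This gives (1). The ``not residually $p$'' half also admits a direct proof avoiding the $\Q_p$-bookkeeping: if $P(1)\not\equiv 0\bmod p$ for a monic irreducible factor $P$ of $P_\varphi$, put $V=\ker P(\varphi)^{c}\cap\Z^d\neq 0$ ($c$ the multiplicity of $P$ in $P_\varphi$); then $\varphi^{p^m}-1$ is invertible on $V$ modulo $p$ for every $m$ by the same eigenvalue computation (now with $\bar P$), so $V\subseteq K_{m,k}$ for all $m,k$ and no nonzero element of $V$ is detected by a finite $p$-quotient.

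Finally (2) and (3) follow quickly. From $[t,x]=(\varphi-1)x$ in $G$ one gets $\gamma_{k+1}(G)\subseteq(\varphi-1)^k\Z^d$, so $\varphi$ unipotent implies $G$ nilpotent, while nilpotency of class $c$ forces $(\varphi-1)^{c}=0$; and by (1), $G$ is residually $p$ for all $p$ iff every irreducible factor $P$ of $P_\varphi$ satisfies $P(1)\equiv 0\bmod p$ for all $p$, i.e. $P(1)=0$, i.e. $P=t-1$, i.e. $P_\varphi=(t-1)^d$, i.e. $\varphi$ unipotent — this is (2). For (3), if $\varphi^k$ is unipotent then $\Z\ltimes_{\varphi^k}\Z^d$ has index $k$ in $G$ and is nilpotent by (2), hence residually $p$ for all $p$; conversely a finite-index subgroup that is residually $p$ for all $p$ contains (take the normal core; subgroups of residually $p$ groups are residually $p$) a finite-index normal such subgroup $H$, which splits as $\Z\ltimes_{\varphi^k|_L}L$ with $L=H\cap\Z^d$ of finite index in $\Z^d$, so (2) forces $\varphi^k|_L$, hence $\varphi^k$, unipotent; running the same normal-core argument with ``nilpotent'' in place of ``residually $p$ for all $p$'' shows likewise that $G$ is virtually nilpotent iff $\varphi$ is quasi-unipotent, completing (3).
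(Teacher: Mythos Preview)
Your proof is correct and takes a genuinely different route from the paper's.

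The paper works through the lower central series: it observes that ``residually $p$'' coincides with ``residually $\pi_p$-torsion-free nilpotent'' (where $\pi_p$ is the set of primes $\neq p$), shows that $\gamma_{n+1}^{\pi}(G)$ is sandwiched between powers of the augmentation ideal $\omega=(1-t)$ acting on $A=\Z^d$, and then applies a Krull Intersection argument over $\Z[\pi^{-1}][t^{\Z}]$ to translate $\bigcap_n(\omega^nA)[\pi^{-1}]=0$ into an $S_\pi$-torsion-freeness condition on $A$. The final step reduces, via the invariant-factor decomposition of $A\otimes\Q$ over the PID $\Q[t^{\Z}]$, to cyclic modules $R/P_iR$, from which the criterion on irreducible factors of $P_\varphi$ is read off. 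Parts (2) and (3) then drop out exactly as you derive them.

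Your approach instead classifies the finite $p$-quotients of $G$ directly through the explicit cofinal family $K_{m,k}=(\varphi^{p^m}-1)\Z^d+p^k\Z^d$, passes to $\Z_p^d$, and uses Hensel lifting together with the $\F_p$-eigenvalue calculation to compute $\bigcap_m(\varphi^{p^m}-1)\Z_p^d$ as the sum $W$ of the primary components away from $t-1$; the last step, matching the $\Q$- and $\Q_p$-primary decompositions to decide when $\Q^d\cap(W\otimes\Q_p)=0$, is the analogue of the paper's invariant-factor reduction. What the paper's framework buys is uniformity: the same argument handles residual nilpotence and residual torsion-free nilpotence simultaneously (their Corollary covering all three at once), and avoids any $p$-adic input. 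What your approach buys is concreteness---you actually exhibit the separating $p$-quotients and, in your alternative argument for the failure direction, give a very short self-contained proof that bypasses both Krull Intersection and the $\Q_p$-bookkeeping. One small remark: your containment $\gamma_{k+1}(G)\subseteq(\varphi-1)^k\Z^d$ is off by one (the paper has $\omega^{k-1}A\geq\gamma_{k+1}(G)\geq\omega^kA$), but this does not affect the equivalence ``$G$ nilpotent $\Leftrightarrow$ $\varphi$ unipotent''.
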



Our main motivation for proving this theorem is to study fundamental groups of graph manifolds.
In \cite{AF10} we  show that if $N$ is a Seifert fibered $3$-manifold, then $\pi_1(N)$ has a finite index subgroup which is residually $p$ for every $p$.  In \cite{AF10} we furthermore prove the following weaker statement regarding the fundamental groups of graph manifolds.

\begin{theorem}\label{thm:gmresp}
Let $N$ be a graph manifold. Then for every $p$ the group $\pi_1(N)$ is virtually residually $p$,
i.e., for every $p$ there exists a finite index subgroup of $\pi_1(N)$ which is residually $p$.
\end{theorem}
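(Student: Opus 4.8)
The plan is to reduce the statement about an arbitrary graph manifold $N$ to the corresponding statement for its Seifert-fibered pieces, which is already available from \cite{AF10}. First I would recall the geometric decomposition: a graph manifold $N$ is (up to taking a finite cover, or by definition) a union of Seifert-fibered pieces $N_1,\dots,N_r$ glued along incompressible tori, so that $\pi_1(N)$ is the fundamental group of a finite graph of groups $\mathcal{G}$ whose vertex groups are the $\pi_1(N_i)$ and whose edge groups are copies of $\Z^2$ corresponding to the gluing tori. We may assume $N$ is closed and irreducible with infinite fundamental group, and after passing to a finite cover we may assume that each gluing torus is two-sided and that the pieces are genuine Seifert pieces with orientable base.

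The key step is to fix a prime $p$ and produce, for each vertex piece $N_i$, a normal finite-index subgroup of $\pi_1(N_i)$ that is residually $p$; by the result of \cite{AF10} on Seifert-fibered manifolds such a subgroup exists. The obstacle is that these subgroups must be chosen \emph{compatibly} along the gluing tori: to run the standard ``effective'' graph-of-groups argument (in the style of the residual finiteness proofs for graph manifolds, and of Perron--Shalen) one needs the finite quotients of the vertex groups to agree on the edge groups, i.e.\ one needs the images of the edge group $\Z^2$ in the two adjacent vertex quotients to be matched up. I would handle this by first passing to a common finite-index subgroup: for each edge torus $T_j$ adjacent to pieces $N_i$ and $N_{i'}$, the inclusions $\pi_1(T_j)\hookrightarrow \pi_1(N_i)$ and $\pi_1(T_j)\hookrightarrow\pi_1(N_{i'})$ identify $\Z^2$ with finite-index subgroups of rank-two peripheral subgroups; one chooses a single characteristic finite-$p$-power-index subgroup of each edge group and pulls everything back, then replaces each $\pi_1(N_i)$ by a finite-index subgroup containing the prescribed edge subgroups and admitting a residually-$p$ finite-index refinement.

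With compatible data in hand, the argument proceeds as in the standard proof that graph manifold groups are residually finite, but keeping track of $p$-power orders. One builds the residually-$p$ finite-index subgroup $\Gamma\le\pi_1(N)$ as (a finite-index subgroup of) the fundamental group of a finite graph of groups obtained by taking, on each vertex, the chosen residually-$p$ finite-index subgroup, and on each edge the chosen common finite-index subgroup; one then verifies that this new graph of groups has fundamental group of finite index in $\pi_1(N)$ (counting indices of vertex and edge groups) and that it is residually $p$. For the residual-$p$ conclusion one uses the fact that a graph of groups with residually-$p$ vertex groups whose edge groups inject as \emph{isolated} (or at least $p$-separable) subgroups is again residually $p$ — this is where one must be careful, using that in each vertex group the relevant edge subgroup is separated by $p$-quotients, a property one can arrange by further shrinking because $\Z^2$ is a $p$-separable subgroup of the vertex group once the vertex group itself is residually $p$ and the quotients by the fiber behave well. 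Finally, since $\Gamma$ has finite index in $\pi_1(N)$, this establishes that $\pi_1(N)$ is virtually residually $p$ for the given $p$, and since $p$ was arbitrary, the theorem follows.

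I expect the main obstacle to be precisely the compatibility-and-separability bookkeeping at the gluing tori: ensuring that after all the finite-index reductions the edge groups still inject into the vertex groups as subgroups separable by $p$-groups, so that the graph-of-groups combination of residually-$p$ groups remains residually $p$. The Seifert-piece input from \cite{AF10} and the general machinery of graph of groups are black boxes; the real work is choosing the cover of $N$ carefully enough that these pieces fit together $p$-adically.
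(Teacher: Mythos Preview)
The present paper does not prove this theorem: it is quoted from the companion paper \cite{AF10} and used here only as input to the proof of Theorem~\ref{thm:ps}. So there is no proof in this paper to compare your proposal against.

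That said, your outline is in the right spirit and is broadly the strategy of \cite{AF10}, but as written it has a genuine gap. The step you rely on---that a graph of groups with residually $p$ vertex groups in which the edge groups are ``$p$-separable'' is again residually $p$---is not a standard black box, and in that generality it is not clear it is true. Separability of the edge subgroup in each adjacent vertex group by $p$-quotients is not enough: to glue, one needs a compatible system of normal $p$-power-index subgroups (equivalently, a $p$-filtration) on each vertex group that restricts to the \emph{same} filtration on each edge $\Z^2$ from both sides, and one must also control the boundary-fiber data so that the induced filtrations on the tori actually match under the gluing homeomorphisms. Knowing only that each Seifert piece is \emph{virtually} residually $p$ does not by itself produce such matched filtrations; some uniform choice of cover and explicit construction of the filtrations is required. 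You correctly flag this as ``the real work'' in your last paragraph, but the proposal gives no indication of how to carry it out, and this is precisely the content of \cite{AF10}.
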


It is now a natural question  whether the fundamental groups of graph manifolds always admit a finite index subgroup which is residually $p$ for every $p$. 
However, we will see that the `first non-trivial' example of a graph manifold already gives a counterexample.
More precisely, recall that a closed orientable \emph{Sol-manifold} is either the union of two twisted $I$-bundles over the Klein bottle, or it is a torus bundle over $S^1$ such that the eigenvalues $\l_1,\l_2$ of the monodromy $H_1(T;\Z)\to H_1(T;\Z)$
are of the form $\{e^{-t},e^t\}$ for some $t>0$ (see \cite[Section~9]{BRW05} and \cite[p.~470]{Sc83b} for details).
In the latter case we have an isomorphism   $\pi_1(N)\cong \Z\ltimes_\varphi \Z^2$
where $\varphi$ has eigenvalues $\{e^{-t},e^t\}$.
 Note that a Sol-manifold is not Seifert fibered, but it is a graph manifold.
The following proposition is now an immediate corollary to Theorem~\ref{mainthm},~(2).

\begin{proposition} \label{prop:sol}
Let $N$ be a  Sol-manifold which is a torus bundle over $S^1$. Then $\pi_1(N)$ does not have a finite index subgroup which is residually $p$ for all $p$.
\end{proposition}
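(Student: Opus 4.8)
The plan is to derive Proposition~\ref{prop:sol} directly from part~(3) of Theorem~\ref{mainthm}. First I would recall the setup: by hypothesis $N$ is a Sol-manifold which is a torus bundle over $S^1$, so as noted in the paragraph preceding the proposition we have an isomorphism $\pi_1(N)\cong\Z\ltimes_\varphi\Z^2$, where $\varphi\colon\Z^2\to\Z^2$ is the monodromy of the bundle and has eigenvalues $\{e^{-t},e^t\}$ for some $t>0$. The entire content of the proof is then to observe that this particular $\varphi$ fails the criterion in Theorem~\ref{mainthm}~(3), i.e.\ that $\varphi$ is not quasi-unipotent, and to invoke the theorem.

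The key step is therefore: $\varphi$ is \emph{not} quasi-unipotent. By definition $\varphi$ is quasi-unipotent if and only if all its eigenvalues are roots of unity (equivalently, some positive power $\varphi^k$ is unipotent). But the eigenvalues of $\varphi$ are $e^{-t}$ and $e^{t}$ with $t>0$, hence both are real and positive, and $e^{t}>1$. A root of unity has absolute value $1$, so $e^{t}$ is not a root of unity; thus $\varphi$ is not quasi-unipotent. (One could equivalently note that the eigenvalues $\lambda_1,\lambda_2$ are real with $\lambda_1\lambda_2=\det\varphi=\pm1$ and $\lambda_1\neq\pm1$, so no power of $\varphi$ can have all eigenvalues equal to $1$.)

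It remains to assemble these observations. Applying Theorem~\ref{mainthm}~(3) to $G=\Z\ltimes_\varphi\Z^2$: since $\varphi$ is not quasi-unipotent, $G$ does \emph{not} have a finite index subgroup which is residually $p$ for all $p$. Transporting this along the isomorphism $\pi_1(N)\cong G$ gives exactly the assertion of the proposition.

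There is essentially no obstacle here — this is a routine corollary — but if I had to name the one point that requires a word of care, it is making sure the reduction to $\Z\ltimes_\varphi\Z^2$ with $\varphi$ having the stated eigenvalues is correctly in force; that is, one should cite the preceding discussion (and the references \cite{BRW05}, \cite{Sc83b}) for the structure of torus-bundle Sol-manifolds rather than re-deriving it, and then the only genuine mathematical input is the one-line observation that $e^{t}$ with $t>0$ is not a root of unity.
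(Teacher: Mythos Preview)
Your proposal is correct and matches the paper's approach: the paper simply states that Proposition~\ref{prop:sol} is an immediate corollary to Theorem~\ref{mainthm} (the paper cites part~(2), though part~(3) as you use is the more directly applicable formulation, and indeed part~(3) is itself deduced from part~(2) later in the paper). The only mathematical content is the observation that the eigenvalues $e^{\pm t}$ with $t>0$ are not roots of unity, so $\varphi$ is not quasi-unipotent, exactly as you argue.
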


In \cite{PS99}, Perron and Shalen introduce the notion of a `weakly residually $p$-nilpotent' group.
We show in Section~\ref{section:resp} that in fact, a group is weakly residually $p$-nilpotent if and only if it is residually $p$.
With this observation  \cite[Proposition~0.3]{PS99} states that every graph manifold group has a finite index subgroup which is
residually $p$ for every $p$. The proof for that claim, however, has a gap (cf. \cite[p.~36]{PS99}).
In fact, the combination of Propositions~\ref{prop:sol} and \ref{prop:weakstrong}   shows that Sol-manifolds  are counterexamples to 
\cite[Proposition~0.3]{PS99}.

\medskip

Let $f\colon M\to N$ be a continuous map of $3$-manifolds. (In this paper, all manifolds are assumed to be compact and connected.) We say that $f$ is a \emph{covering homology equivalence} if  for any finite covering $\ti{N}\to N$
(not necessarily regular) the induced map $\ti{f}\colon\ti{M}\to \ti{N}$ is a $\Z$-homology equivalence.
Perron and Shalen \cite{PS99} proved the following theorem under the (as we saw, erroneous) assumption that  all graph manifold groups have a finite index subgroup which is
residually $p$ for every $p$.

\begin{theorem}\label{thm:ps}
Let $M$ and $N$ be 
closed irreducible orientable graph manifolds with infinite fundamental group. Then every  covering homology equivalence $M\to N$ is homotopic to a homeomorphism.
\end{theorem}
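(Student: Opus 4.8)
The plan is to follow the strategy of Perron and Shalen as closely as possible, replacing the single prime $p$ that works for the whole manifold (which, as we now know, need not exist) by a prime chosen \emph{after} one identifies the relevant pieces. First I would reduce to the case in which both $M$ and $N$ are aspherical and their fundamental groups are infinite and torsion-free; since $M$ and $N$ are closed irreducible graph manifolds, this is standard, and a covering homology equivalence $f\colon M\to N$ induces an isomorphism on $H_*$ of every finite cover. The classical consequence (as in \cite{PS99}) is that $f_*\colon\pi_1(M)\to\pi_1(N)$ is \emph{injective} with image a subgroup of finite index, and in fact of index $1$: indeed, if the image had index $n>1$, passing to the corresponding cover $\ti N\to N$ would give $\ti M$ homotopy equivalent to $M$ but with $H_*(\ti N)$ differing from $H_*(M)$ in a way incompatible with the homology-equivalence hypothesis (a Euler-characteristic/transfer argument). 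Hence $f_*$ is an isomorphism and $f$ is a homotopy equivalence; by Waldhausen's theorem every homotopy equivalence between closed irreducible $3$-manifolds with infinite $\pi_1$ that are Haken (graph manifolds are Haken) is homotopic to a homeomorphism. So the real content is the index-one step, which is where residual $p$-ness enters.

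The point where Perron--Shalen invoked ``residually $p$ for all $p$'' is in showing that no finite cover can have ``extra'' homology, i.e. in controlling $b_1$ and torsion in the homology of covers. The fix is: work one prime at a time, and exploit Theorem~\ref{thm:gmresp}, which guarantees that $\pi_1(N)$ has a finite-index subgroup residually $p$ \emph{for each fixed $p$}. Concretely, for a fixed prime $p$ I would pass to a finite-index characteristic subgroup $\G\le\pi_1(N)$ that is residually $p$; its $p$-lower-central-series quotients then detect all of $\G$, and pro-$p$ completion techniques (exactly as in \cite{PS99}, where the key homological input is that a residually-$p$ group has enough $p$-group quotients to make the relevant Betti numbers over $\F_p$ behave) apply verbatim to the cover of $N$ corresponding to $\G$. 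Running the Perron--Shalen argument over $\F_p$ on this cover shows that the map on that cover is degree $\pm1$ and a mod-$p$ homology equivalence. Doing this for two distinct primes $p\neq q$, or alternatively combining the mod-$p$ conclusion for all $p$ with the rational statement, pins down the degree of $f$ to be $\pm1$ and forces $f_*\colon\pi_1(M)\to\pi_1(N)$ to be onto, hence (with injectivity above) an isomorphism.

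The main obstacle will be verifying that the Perron--Shalen machinery genuinely needs only ``residually $p$ for one $p$ at a time'' rather than ``residually $p$ for all $p$ simultaneously.'' One must check that in their argument the prime is never required to be varied \emph{within a single cover}: the homological estimates bounding the first Betti number of finite covers, and the vanishing/nonvanishing of the relevant twisted homology groups, are each carried out at a single prime, and the ``for all $p$'' was used only to have a \emph{uniform} cover on which everything works at once. Replacing that uniform cover by a $p$-dependent cover $N_p\to N$ (legitimate by Theorem~\ref{thm:gmresp}) and correspondingly $M_p\to M$, and then re-assembling the per-prime conclusions, is the crux. Once one confirms that the degree-$\pm1$ and surjectivity conclusions are insensitive to the cover used — which they are, since they are statements about $f$ itself, read off from each $N_p$ via transfer — the rest of the proof is the classical Waldhausen rigidity argument and goes through unchanged.
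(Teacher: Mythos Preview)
Your proposal reverses the roles of injectivity and surjectivity and thereby misidentifies where the residual-$p$ input actually enters. For a degree-$\pm 1$ map between closed orientable $3$-manifolds it is \emph{surjectivity} of $f_*\colon\pi_1(M)\to\pi_1(N)$ that is classical (cf.~\cite[Lemma~15.12]{He76}); your ``index-one step'' is then automatic, and no residual property is needed for it. Injectivity of $f_*$, by contrast, is not a ``classical consequence'' of the covering-homology-equivalence hypothesis---it is precisely the crux, and your proposal offers no argument for it.

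In \cite{PS99} the (erroneous) assumption ``virtually residually $p$ for all $p$ simultaneously'' is used in exactly one place: the proof that every JSJ torus $T\subset M$ maps $\pi_1$-injectively into $\pi_1(N)$ (Proposition~\ref{prop:tinj} here, \cite[Lemma~4.1.1]{PS99}). The paper's repair is local and concrete: Lemma~\ref{lem:order} shows that ``virtually residually $p$ for each $p$ separately'' (Theorem~\ref{thm:gmresp}) already suffices to find, for any infinite-order element $g\in\pi_1(N)$ and any $m>0$, a finite quotient in which the image of $g$ has order divisible by $m$. That is exactly what the final step of the proof of Proposition~\ref{prop:tinj} requires; once that proposition stands, \cite[\S\S4.2--4.3, 5.3]{PS99} go through unchanged. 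Your sketch never mentions the JSJ tori and so does not touch the actual gap.

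The alternative route you outline---show $f_*$ is an isomorphism outright and then invoke Waldhausen---is the strategy of Theorem~\ref{thm:ps2}, and there it relies on $\pi_1(M)$ being residually finite \emph{solvable} (Lemma~\ref{lem:completion}): one shows every homomorphism from $\pi_1(M)$ to a finite solvable group factors through $\pi_1(N)$, and then residual finite solvability of the source kills $\ker f_*$. The paper explicitly notes it is unknown whether graph-manifold groups are residually finite solvable. ``Virtually residually $p$ for each $p$'' does not obviously substitute: passing to a finite-index residually-$p$ subgroup $\Gamma\le\pi_1(N)$ produces $p$-group quotients of $\Gamma$, but to detect a nontrivial element of $\ker f_*$ you need finite quotients of $\pi_1(M)$ that factor through $\pi_1(N)$, and the hypothesis only controls covers of $M$ pulled back from $N$.
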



In Section~\ref{section:proofps} we will show how to modify the original proof of Theorem~\ref{thm:ps} to accommodate for the weaker information
(coming from Theorem \ref{thm:gmresp}) on the virtual properties of the fundamental groups of a graph manifold.
We refer to \cite{PS99} for an application of Theorem~\ref{thm:ps} to singularity theory (the `$\mu$-constant problem' in complex dimension $3$).

\begin{remark}
The conditions in Theorem \ref{thm:ps} cannot be removed, as the following two examples show:
\bn
\item The lens spaces $L(5,1)$ and $L(5,2)$ are well-known to be homotopy equivalent but not homeomorphic: every homotopy equivalence $L(5,1) \to L(5,2)$ is a covering homology equivalence but not homotopic to a homeomorphism.
    This shows that it is necessary to demand that $M$ and $N$ have infinite fundamental groups.
\item Let $K\subseteq S^3$ be a non-trivial knot with Alexander polynomial $1$. Let $M$ be the $0$-framed surgery along $K$ and let $N=S^1\times S^2$.
Gabai showed that $M$ and $N$ are not homeomorphic \cite[Corollary~5]{Ga86}.
The abelianization $\pi_1(S^3\sm \nu K)\to \Z=\pi_1(S^1\times D^2)$ gives rise to a degree one map $S^3\sm \nu K\to S^1\times D^2$ which is a homeomorphism on the boundary. (Here $\nu K$ denotes a tubular neighborhood of $K$ in $S^3$.)
By capping off the manifolds we obtain a degree one map $f\colon M\to N$  which can be seen to be  a covering homology equivalence,
but $f$ is not homotopic to a homeomorphism. This shows that we cannot drop the condition that  $M$ is a graph manifold.
\en
\end{remark}

We mention that \cite{De03} contains a generalization of Theorem~\ref{thm:ps} to covering homology equivalences between closed Haken manifolds of the same Gromov simplicial volume, with the case of simplicial volume $0$ corresponding exactly to Theorem~\ref{thm:ps}.
The proof in \cite{De03} focusses on the case of non-zero simplicial volume and does not use residual properties of graph manifold groups.


\medskip

We conclude the paper with a proof of  the following  variation on Theorem \ref{thm:ps}.

\begin{theorem}\label{thm:ps2}
Let $M$ and $N$ be 
closed and  irreducible $3$-manifolds with infinite fundamental groups.
Assume that $\pi_1(M)$  is residually finite solvable. Then every covering homology equivalence $M\to N$  is homotopic to a homeomorphism.
\end{theorem}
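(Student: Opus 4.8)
The plan is to reduce Theorem~\ref{thm:ps2} to the statement that the covering homology equivalence $f\colon M\to N$ induces an \emph{isomorphism} $f_*\colon\pi_1(M)\to\pi_1(N)$. Once this is established one concludes as follows: a closed irreducible $3$-manifold with infinite fundamental group is aspherical (by the Sphere Theorem its universal cover has trivial $\pi_2$, and being a noncompact $3$-manifold it has vanishing $H_3$, so a Hurewicz argument shows it is contractible), so $M$ and $N$ are $K(\pi,1)$'s and $f$ is a homotopy equivalence; by topological rigidity of closed aspherical $3$-manifolds --- Waldhausen in the Haken case, and in general a consequence of Geometrization --- $f$ is then homotopic to a homeomorphism. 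Thus everything comes down to showing $f_*$ is an isomorphism.

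That $f_*$ is surjective is standard. Being covered by the identity, $f$ is itself a $\Z$-homology equivalence, hence also one with $\Z/2$-coefficients (the algebraic mapping cone of a $\Z$-homology equivalence of finite complexes is an acyclic bounded complex of free abelian groups, hence contractible and stays so after $\otimes\,\Z/2$); so $f$ has $\Z/2$-degree $1$, and in particular $f_*(\pi_1(M))$ has finite index in $\pi_1(N)$ (otherwise the associated cover of $N$ would be noncompact and $f$ would act as $0$ on top $\Z/2$-homology). Passing to the corresponding finite cover of $N$ and lifting $f$, multiplicativity of the degree forces this cover to be trivial in the orientable case; the somewhat exceptional non-orientable case reduces to this one by passing to orientation double covers, which are among the finite covers controlled by the hypothesis. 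In all cases $f_*$ is onto. Put $K:=\ker f_*\trianglelefteq\pi_1(M)$; it remains to prove $K=1$.

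The key observation is: \emph{for every finite-index subgroup $\Gamma\le\pi_1(M)$ with $K\subseteq\Gamma$ one has $K\subseteq[\Gamma,\Gamma]$.} Indeed, by surjectivity of $f_*$ such a $\Gamma$ equals $f_*^{-1}(H)$, where $H:=f_*(\Gamma)$ has the same finite index in $\pi_1(N)$, and the hypothesis that $f$ is a covering homology equivalence says precisely that the induced map $H_1(\widetilde M_\Gamma;\Z)\to H_1(\widetilde N_H;\Z)$ on the corresponding finite covers is an isomorphism. Identifying these groups with $\Gamma/[\Gamma,\Gamma]$ and $H/[H,H]$ turns this into the map induced by $f_*|_\Gamma$, which is onto with kernel the image of $K$; an isomorphism forces that image to be trivial, i.e. $K\subseteq[\Gamma,\Gamma]$.

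Now I claim $q(K)=1$ for every homomorphism $q$ of $\pi_1(M)$ onto a finite solvable group $Q$; since $\pi_1(M)$ is residually finite solvable, this gives $K\subseteq\bigcap_q\ker q=1$, finishing the proof. The argument is an induction on the derived length $d$ of $Q$. If $Q$ is abelian, then $\ker q\supseteq[\pi_1(M),\pi_1(M)]\supseteq K$ by the key observation applied to $\Gamma=\pi_1(M)$. If $d\ge2$, let $A:=Q^{(d-1)}$, a nontrivial abelian normal subgroup of $Q$, and let $q'\colon\pi_1(M)\to Q/A$ be the composite of $q$ with $Q\to Q/A$; since $Q/A$ has derived length $d-1$, the inductive hypothesis gives $q'(K)=1$, i.e. $K\subseteq\Gamma:=\ker q'=q^{-1}(A)$, a finite-index normal subgroup of $\pi_1(M)$ with $q(\Gamma)=A$. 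By the key observation $K\subseteq[\Gamma,\Gamma]$, and since $A$ is abelian $q([\Gamma,\Gamma])=[A,A]=1$, so $q(K)=1$. The main obstacle is not really the group theory above --- which is a clean interplay between the covering homology equivalence (controlling the abelianizations of \emph{all} the subgroups $f_*^{-1}(H)$) and residual finite solvability (which lets one climb the derived series of finite quotients) --- but the final geometric input, topological rigidity of closed aspherical $3$-manifolds, which rests on Geometrization; one must also, routinely, dispose of the non-orientable case via orientation double covers as indicated above.
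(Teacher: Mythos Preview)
Your proof is correct and follows essentially the same approach as the paper: reduce to showing $f_*$ is an isomorphism, obtain surjectivity from degree considerations (the paper cites Hempel; you argue directly via $\Z/2$-degree and orientation covers), then run an induction on the derived length of finite solvable quotients using that the covering homology hypothesis controls $H_1$ of every relevant finite cover, and conclude via Geometrization/rigidity. Your ``key observation'' $K\subseteq[\Gamma,\Gamma]$ and the ensuing induction is a slightly more concrete packaging of the paper's Lemma~\ref{lem:completion}, which phrases the same induction as surjectivity of $\Hom(\pi_1(N),S)\to\Hom(\pi_1(M),S)$ and invokes Shapiro's Lemma where you simply identify $H_1$ of a cover with the abelianization of the corresponding subgroup; the content is the same.
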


Note that the fundamental groups of fibered 3-manifolds are residually finite solvable. We do not know whether  fundamental groups of graph manifolds with infinite fundamental group
are residually finite solvable.


\section{Residual Properties of Mapping Tori of Automorphisms of $\Z^d$}
\label{sec:residual properties}

\noindent
In this section we will give a detailed discussion of residual properties of mapping tori of automorphisms of $\Z^d$.
Along the way we will in particular prove Theorem \ref{thm:residual properties of mapping tori}.
%
We begin with some remarks on variants of residual torsion-freeness and on modules over group rings.

\subsection{Residual $\pi$-torsion-freeness}
Let $\pi$ be a set of prime numbers. A non-zero integer will be called a {\em $\pi$-number}\/ if its set of prime divisors is contained in $\pi$. We say that a group $G$ is {\em $\pi$-torsion-free}\/ if for each $\pi$-number $k$ the equation $x^k=1$ only has the trivial solution $x=1$ in $G$.

\begin{examples} \label{ex:pi-numbers} \mbox{}

\begin{enumerate}
\item If $\pi=\emptyset$, then the only $\pi$-numbers are $\pm 1$, and every group is $\pi$-torsion-free.
\item If $\pi=\text{the set of all primes}$, then every non-zero integer is a $\pi$-number, and $\pi$-torsion-free simply means torsion-free.
\item If $p$ is a prime and $\pi_p$ is the set of all primes different from $p$, then the set of $\pi_p$-numbers is $\Z\setminus p\Z$, and the $\pi_p$-torsion-free groups are the groups in which each element either has infinite order or $p$-power order. A finitely generated group is residually $p$ if and only if it is residually $\pi_p$-torsion-free nilpotent (cf.~\cite[Window~6, Proposition~1.2]{LS03}).
\end{enumerate}
\end{examples}

The elements of a nilpotent group $H$ whose order is a $\pi$-number form a normal subgroup $\operatorname{tor}_\pi(H)$ of $H$, the $\pi$-torsion subgroup of $H$. So given a group $G$ and $n\geq 1$, we may define
$\gamma^\pi_n(G)$ as the inverse image of $\operatorname{tor}_\pi(G/\gamma_n(G))$ under the natural morphism $G\to G/\gamma_n(G)$, that is,
$$\gamma^\pi_n(G) = \{g\in G: \text{$g^k\in\gamma_n(G)$ for some $\pi$-number $k$}\}.$$
One sees easily that $\gamma^\pi_n(G)/\gamma^\pi_{n+1}(G)$ is a $\pi$-torsion-free abelian group. The group $G$ is residually $\pi$-torsion-free nilpotent if and only if $\bigcap_{n\geq 1}\gamma^\pi_n(G)=1$.

\subsection{Modules over group rings}
Let $k$ be a commutative Noetherian ring. We denote the $k$-algebra of Laurent polynomials in the indeterminate $t$ over $k$ by $k[t^{\Z}]=k[t,t^{-1}]$. The ring $R=k[t^\Z]$ is also Noetherian.
The augmentation morphism $\epsilon\colon R\to k$ is the $k$-algebra morphism given by $t\mapsto 1$. The kernel of $\epsilon$ is the augmentation ideal
$\omega=\omega(R):= (1-t)R$.

Given a multiplicative subset $S$ of $R$, we say that an $R$-module $M$ is $S$-torsion-free if $sx\neq 0$ for all $s\in S$ and $x\in M$ with $x\neq 0$.
The next lemma is a simple case of the Krull Intersection Theorem:

\begin{lemma}\label{lem:KIT}
Let $M$ be a finitely generated $R$-module, and set $S=\{1-r:r\in \omega\}$ and $N=\bigcap_{n\geq 1} \omega^nM$. Then
$$N = \{x\in M: \text{$sx=0$ for some $s\in S$}\}.$$
In particular, $M$ is $S$-torsion-free if and only if
$N=0$.
\end{lemma}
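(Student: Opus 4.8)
The plan is to deduce Lemma~\ref{lem:KIT} from the Krull Intersection Theorem in the following form: if $M$ is a finitely generated module over a Noetherian ring $R$ and $\aa\subseteq R$ is an ideal, then $N:=\bigcap_{n\geq 1}\aa^n M$ satisfies $\aa N = N$, and moreover there exists $a\in\aa$ with $(1-a)N=0$. (This last refinement is the standard Nakayama-type consequence: applying the determinant trick to the equality $N=\aa N$ for the finitely generated module $N$ produces an element $a\in\aa$ with $(1-a)$ annihilating $N$.) I will apply this with $\aa=\omega=(1-t)R$.

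First I would prove the inclusion $N\subseteq\{x\in M:sx=0\text{ for some }s\in S\}$. By the refined Krull Intersection Theorem there is $r\in\omega$ with $(1-r)N=0$; since $1-r\in S$ by definition of $S$, every $x\in N$ is killed by the element $1-r\in S$, giving the inclusion. Conversely, suppose $x\in M$ and $sx=0$ for some $s=1-r\in S$ with $r\in\omega$. I claim $x\in\omega^n M$ for every $n$. From $(1-r)x=0$ we get $x=rx$, hence by induction $x=r^n x$ for all $n\geq 1$; since $r\in\omega$ we have $r^n\in\omega^n$, so $x=r^n x\in\omega^n M$. As this holds for all $n$, $x\in N$. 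This establishes the displayed equality. The final sentence is then immediate: $M$ is $S$-torsion-free exactly when the right-hand set is $\{0\}$, i.e.\ when $N=0$.

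The only genuine point requiring care — and the step I expect to be the main obstacle, though it is standard — is producing the single element $s\in S$ that annihilates all of $N$ simultaneously, rather than just knowing each element of $N$ is individually $S$-torsion. This is precisely where one needs $N$ to be finitely generated (which follows from $R$ being Noetherian and $M$ finitely generated) so that the Cayley--Hamilton/determinant trick applies to the surjection $\omega\otimes N\to N$. I would either cite the standard statement of the Krull Intersection Theorem in this strong form (e.g.\ from Atiyah--Macdonald or Matsumura) or include the one-line determinant-trick argument: choosing generators $x_1,\dots,x_m$ of $N$ and writing $x_i=\sum_j a_{ij}x_j$ with $a_{ij}\in\omega$, the element $s=\det(I-(a_{ij}))$ lies in $1-\omega=S$ and annihilates $N$. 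Everything else is the elementary back-and-forth above, with no subtleties beyond the observation that $r\in\omega\Rightarrow r^n\in\omega^n$.
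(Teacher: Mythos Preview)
Your proof is correct. The easy inclusion $\{x:sx=0\text{ for some }s\in S\}\subseteq N$ is handled identically in the paper. For the other inclusion the paper takes a slightly different, more hands-on route: rather than invoking the Krull Intersection Theorem and Nakayama on $N$ to obtain a single annihilator for all of $N$, it works element by element. Given $x\in N$ and generators $y_1,\dots,y_m$ of $M$, one writes $x=(t-1)^n\sum_i r_{in}y_i$ for each $n$, then uses Noetherianity of $R^m$ to find a linear dependence $r_{in}=\sum_{j<n} a_j r_{ij}$ among the coefficient vectors, which unwinds to $(1-r)x=0$ with $r=\sum_j a_j(t-1)^{n-j}\in\omega$. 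Your approach has the advantage of producing a uniform $s\in S$ annihilating all of $N$ and of working verbatim for an arbitrary ideal $\omega$; the paper's argument is entirely self-contained and exploits that $\omega$ is principal, avoiding any appeal to a named theorem.
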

\begin{proof}
The inclusion ``$\supseteq$'' is trivial: if $r\in \omega$, $x\in M$, with $(1-r)x=0$, then
$x=rx=r^2x=\cdots=r^nx$ for every $n$, hence $x\in \bigcap_n \omega^nM=N$.
For the reverse inclusion, suppose $M=Ry_1+\cdots+Ry_m$, and let $x\in N$. So for each $n\geq 1$ we can take $r_{in}\in R$ with $x=(t-1)^n (r_{1n}y_1+\cdots+r_{mn}y_m)$. The $R$-module $R^m$ being Noetherian, there is some $n\geq 1$ and $a_1,\dots,a_{n-1}\in R$ with $r_{in}=\sum_{j=1}^{n-1} a_j r_{ij}$ for $i=1,\dots,m$.
Hence
$$x=(t-1)^n \sum_{i=1}^m r_{in}y_i = \sum_{j=1}^{n-1} a_j(t-1)^{n-j}\cdot x.$$
So $(1-r)x=0$ where $r=\sum_{j=1}^{n-1} a_j(t-1)^{n-j}\in\omega$.
\end{proof}

In the rest of this subsection we assume $k=\Z$. Given a set $\pi$ of prime numbers, we let
$\Z[\pi^{-1}]$ be the localization of $\Z$ at the (multiplicative) subset of $\pi$-numbers subring, i.e., the subring of $\Q$ consisting of all rational numbers whose denominators are $\pi$-numbers. We put $R[\pi^{-1}]:=(\Z[\pi^{-1}])[t^\Z]$. (So $R_\emptyset=R$.) The ring $R[\pi^{-1}]$ is a Noetherian UFD.
We also let
$$S_\pi := \{r\in R: \text{$\epsilon(r)$ is a $\pi$-number}\}.$$
Let $M$ be a $\pi$-torsion-free abelian group.
Then the natural morphism $M\to M[\pi^{-1}]:=M\otimes_\Z \Z[\pi^{-1}]$ is injective, and we identify $M$ with a subgroup of $M[\pi^{-1}]$ in this way.
Clearly if $M$ is an $R$-module, then $M[\pi^{-1}]$ can naturally be given the structure of an $R[\pi^{-1}]$-module making $M$ an $R$-submodule of $M[\pi^{-1}]$. Moreover, if $N$ is an $R$-submodule of the $R$-module $M$, then $N[\pi^{-1}]$ is an $R[\pi^{-1}]$-submodule of $M[\pi^{-1}]$ in a natural way. Also,
$(\omega(R)^n M)[\pi^{-1}] = \omega(R[\pi^{-1}])^n M[\pi^{-1}]$ for every $n\geq 1$.
Thus from Lemma~\ref{lem:KIT} we obtain the following criterion for finitely generated $R$-modules to be $S_\pi$-torsion-free:

\begin{corollary}\label{cor:KIT}
Suppose $M$ is a finitely generated $\pi$-torsion-free $R$-module. Then the following are equivalent:
\begin{enumerate}
\item $M$ is $S_\pi$-torsion-free;
\item $M[\pi^{-1}]$ is $S_\pi^*$-torsion-free, where
$$S^*_\pi := \{r\in R[\pi^{-1}]: \epsilon(r)=1\}=\{1-r:r\in\omega(R[\pi^{-1}])\};$$
\item $\bigcap_{n\geq 1} (\omega(R)^n M)[\pi^{-1}]=0$.
\end{enumerate}
\end{corollary}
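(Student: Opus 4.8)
The plan is to deduce everything from Lemma~\ref{lem:KIT} by passing back and forth between $M$ and its localization $M[\pi^{-1}]$, using the fact that $M$ is $\pi$-torsion-free so that $M\hookrightarrow M[\pi^{-1}]$. First I would dispose of the equivalence (2)$\Leftrightarrow$(3): this is immediate by applying Lemma~\ref{lem:KIT} to the finitely generated $R[\pi^{-1}]$-module $M[\pi^{-1}]$ (note $R[\pi^{-1}]$ is Noetherian, as stated), whose associated multiplicative set $S$ is precisely $S^*_\pi=\{1-r:r\in\omega(R[\pi^{-1}])\}$, together with the identity $(\omega(R)^nM)[\pi^{-1}]=\omega(R[\pi^{-1}])^nM[\pi^{-1}]$ recorded just before the statement. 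So $M[\pi^{-1}]$ is $S^*_\pi$-torsion-free iff $\bigcap_n(\omega(R)^nM)[\pi^{-1}]=0$.

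Next I would handle (1)$\Leftrightarrow$(3). Write $N=\bigcap_{n\geq1}\omega(R)^nM$, an $R$-submodule of $M$. Since localization at the $\pi$-numbers is exact on finitely generated modules over the Noetherian ring $R$, we have $\bigcap_n(\omega(R)^nM)[\pi^{-1}]=\big(\bigcap_n\omega(R)^nM\big)[\pi^{-1}]=N[\pi^{-1}]$; here I should be slightly careful and justify that localization commutes with this particular countable intersection, which follows because the chain $\omega(R)^nM$ is descending and $R$ is Noetherian (so the intersection stabilizes after finitely many steps after localizing, or one argues via the Artin--Rees/Krull description in Lemma~\ref{lem:KIT} directly). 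Thus (3) says $N[\pi^{-1}]=0$. Since $N$ is a submodule of the $\pi$-torsion-free group $M$, it too is $\pi$-torsion-free, so $N\hookrightarrow N[\pi^{-1}]$; hence $N[\pi^{-1}]=0$ iff $N=0$. By Lemma~\ref{lem:KIT}, $N=0$ iff $M$ is $S$-torsion-free with $S=\{1-r:r\in\omega(R)\}$. It remains to reconcile this $S$ with $S_\pi=\{r\in R:\epsilon(r)\text{ is a }\pi\text{-number}\}$: an element $s\in S_\pi$ differs from a $\pi$-number by an element of $\omega(R)$, and since $M$ is $\pi$-torsion-free, multiplication by a $\pi$-number is injective on $M$; conversely $\{1-r:r\in\omega(R)\}\subseteq S_\pi$. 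A short argument then shows $M$ is $S_\pi$-torsion-free iff it is $\{1-r:r\in\omega(R)\}$-torsion-free, giving (1)$\Leftrightarrow$(3).

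I expect the main obstacle to be the bookkeeping in the last reconciliation step — showing that $S_\pi$-torsion-freeness is equivalent to torsion-freeness with respect to the smaller set $\{1-r:r\in\omega(R)\}$, using $\pi$-torsion-freeness of $M$ to absorb the "unit part" $\epsilon(r)$. Everything else is formal: exactness of localization, the injection $M\hookrightarrow M[\pi^{-1}]$, and a direct appeal to Lemma~\ref{lem:KIT} in two rings. No deep new idea is needed; the content is entirely in organizing these observations so that the three conditions line up.
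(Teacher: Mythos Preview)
Your argument for (2)$\Leftrightarrow$(3) is fine and is exactly what the paper has in mind. The difficulty is in your route to (1). Both of the two ``bookkeeping'' steps you flag as routine are in fact false, and a single example kills them simultaneously.

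Take $\pi=\{2\}$ and $M=R/(t+1)R$. As an abelian group $M\cong\Z$ (with $t$ acting as $-1$), so $M$ is $\pi$-torsion-free and finitely generated over $R$. Since $t-1$ acts as $-2$, we get $\omega^n M=2^nM$, hence $N:=\bigcap_n\omega^nM=0$ and, by Lemma~\ref{lem:KIT}, $M$ is $S$-torsion-free for $S=\{1-r:r\in\omega(R)\}$ (indeed every $s\in S$ acts on $M\cong\Z$ as an odd integer). On the other hand $t+1\in S_\pi$ and $(t+1)M=0$, so $M$ is \emph{not} $S_\pi$-torsion-free. Moreover $(\omega^nM)[\pi^{-1}]=2^n\Z\otimes\Z[\tfrac12]=\Z[\tfrac12]$ for every $n$, so $\bigcap_n(\omega^nM)[\pi^{-1}]=\Z[\tfrac12]\neq 0=N[\pi^{-1}]$. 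Thus localization does \emph{not} commute with this descending intersection, and $S$-torsion-freeness together with $\pi$-torsion-freeness does \emph{not} imply $S_\pi$-torsion-freeness. Your chain
\[
(3)\ \Longleftrightarrow\ N[\pi^{-1}]=0\ \Longleftrightarrow\ N=0\ \Longleftrightarrow\ M\text{ is }S\text{-torsion-free}\ \Longleftrightarrow\ (1)
\]
breaks at both ends.

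The fix is to bypass (3) entirely when linking to (1): prove (1)$\Leftrightarrow$(2) directly. Every $s\in S_\pi$ becomes, after dividing by the $\pi$-number $\epsilon(s)$, an element of $S_\pi^*$; conversely every $s^*\in S_\pi^*$ clears denominators to an element of $S_\pi$. Using the injection $M\hookrightarrow M[\pi^{-1}]$ one then checks in two lines that $M$ is $S_\pi$-torsion-free iff $M[\pi^{-1}]$ is $S_\pi^*$-torsion-free. Combined with your (2)$\Leftrightarrow$(3), this gives the corollary without any claim about localizing infinite intersections.
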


\subsection{Residual properties of mapping tori}
Let $A$ be a finitely generated $\pi$-torsion-free abelian group (written additively)  and let $\varphi$ be an automorphism of $A$; we construe $A$ as a (left) module over $R=\Z[t^{\Z}]$ in the natural way.
Form the semidirect product $G=\Z\ltimes_\varphi A$. Then
$G$ fits into a short exact sequence
$$0\to A\to G\to t^{\Z}\to 1,$$
and we construe $A$ as a normal subgroup of $G$ in this way. The normal subgroups of $G$ contained in $A$ are precisely the $R$-submodules of $A$.

\begin{lemma} \mbox{} \label{lem:powers of omega vs lower central series}
For all $n\geq 1$, we have
\begin{enumerate}
\item  $\omega^{n-1}A \geq \gamma_{n+1}(G) \geq \omega^n A$, and
\item  $\gamma^\pi_{n+1}(G)=(\gamma_{n+1}(G))[\pi^{-1}]\cap A$.
\end{enumerate}
\end{lemma}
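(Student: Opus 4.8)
The plan is to analyze the lower central series $\gamma_n(G)$ of $G = \Z \ltimes_\varphi A$ by keeping careful track of commutators, exploiting the short exact sequence $0 \to A \to G \to t^{\Z} \to 1$ and the module structure of $A$ over $R = \Z[t^{\Z}]$. The key observation is that if $g = (i, a) \in G$ with $i \in \Z$ written multiplicatively as $t^i$, and $b \in A$, then the commutator $[g, (0,b)]$ lies in $A$ and equals $(\varphi^i - \id)(b) = (t^i - 1)b$, while $[(0,b),(0,c)] = 0$ since $A$ is abelian. Since $t^i - 1$ lies in the augmentation ideal $\omega = (1-t)R$ for every $i$, and since $t-1$ generates $\omega$, a short induction should show that $[G, A] = \omega A$ and more generally that the "$A$-part" of $\gamma_{n+1}(G)$ is squeezed between $\omega^n A$ and $\omega^{n-1} A$. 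For part (1), I would prove both inclusions by induction on $n$: for the lower bound $\gamma_{n+1}(G) \supseteq \omega^n A$, iterate the commutator identity above starting from $\gamma_2(G) \ni (t-1)b$ for all $b$; for the upper bound $\gamma_{n+1}(G) \subseteq \omega^{n-1} A$, note that $\gamma_2(G) \subseteq A$ (since $G/A \cong \Z$ is abelian), and then each further commutator with an element of $G$ multiplies by a factor in $\omega$ — except one must be slightly careful because commuting with an element of $\gamma_n(G)$ rather than all of $G$ is what appears, which is why the bound is $\omega^{n-1}$ and not $\omega^n$ on that side.

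For part (2), I would use that $\gamma^\pi_{n+1}(G)$ is by definition the preimage in $G$ of $\operatorname{tor}_\pi(G/\gamma_{n+1}(G))$. An element $g \in G$ lies in $\gamma^\pi_{n+1}(G)$ iff $g^k \in \gamma_{n+1}(G)$ for some $\pi$-number $k$. Using part (1), $\gamma_{n+1}(G) \subseteq A$ (for $n \geq 1$), so $g^k \in A$ forces $g \in A$ — here one uses that $A$ is $\pi$-torsion-free and that $G/A \cong \Z$ has no $\pi$-torsion, so the image of $g$ in $\Z$ must be $0$. Thus $\gamma^\pi_{n+1}(G) \subseteq A$, and for $g = a \in A$ the condition becomes $k a \in \gamma_{n+1}(G)$ for some $\pi$-number $k$, i.e. $a \in \gamma_{n+1}(G)[\pi^{-1}] \cap A$ under the identification of $A$ as a subgroup of $A[\pi^{-1}]$. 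This is exactly the asserted equality; the passage from "$ka \in \gamma_{n+1}(G)$" to "$a \in \gamma_{n+1}(G)[\pi^{-1}]$" is where $\pi$-torsion-freeness of $A$ (hence injectivity of $A \to A[\pi^{-1}]$) and the fact that $\gamma_{n+1}(G)[\pi^{-1}]$ is precisely the set of elements of $A[\pi^{-1}]$ a $\pi$-multiple of which lands in $\gamma_{n+1}(G)$ get used.

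I expect the main obstacle to be the bookkeeping in the inductive proof of the two inclusions in part (1), specifically getting the indices to line up so that the gap is exactly one power of $\omega$: one must verify that $\gamma_{n+1}(G) = [\gamma_n(G), G]$ produces a factor of $t-1$ each time from the $\Z$-direction of $G$, that commutators purely within $A$ contribute nothing, and that mixed terms are absorbed — and then check that the lower bound $\omega^n A$ is actually attained by commuting the generator $t \in G$ repeatedly against $A$, while the upper bound follows because every element of $\gamma_{n+1}(G)$ is a product of such iterated commutators modulo $\gamma_{n+2}(G)$, each lying in $\omega^{n-1}A$. Everything else — the descent to $A$ in part (2) and the localization identity — should be formal once part (1) is in hand.
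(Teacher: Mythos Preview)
Your proposal is correct and follows essentially the same approach as the paper's proof: the paper also proceeds by induction on $n$ for part~(1), using the commutator formula $[g,a]=(1-\ol{g})a$ (your version differs by a harmless sign, depending on commutator convention), and for part~(2) uses exactly your observation that $g^k\in\gamma_{n+1}(G)\subseteq A$ forces $\ol g=1$ since $t^{\Z}$ is torsion-free. Your write-up is in fact more detailed than the paper's, which leaves both parts as ``easy'' exercises.
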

\begin{proof}
Part (1) follows by an easy
induction on $n$, using that for $g\in G$, $a\in A$ we have
$[g,a]=g^{-1}a^{-1}ga=(1-\ol{g})a\in A$, where $\ol{g}$ is the image of $g$ under the natural projection $G\to t^\Z$. Part (2) is also easy to show, noting that by (1), for all $n\geq 1$, if $g\in G$ satisfies $g^k\in\gamma_{n+1}(G)$ then $\ol{g}^k=1$ and hence $\ol{g}=1$ (since $t^\Z$ is torsion-free), or equivalently, $g\in A$.
\end{proof}

In particular, by the previous lemma we have
$$\bigcap_{n\geq 1} (\gamma_{n+1}(G))[\pi^{-1}]=\bigcap_{n\geq 1} (\omega^n A)[\pi^{-1}],$$
and $G$ is residually $\pi$-torsion-free nilpotent if and only if $\bigcap_{n\geq 1} (\omega^n A)[\pi^{-1}]=0$. Therefore, by Corollary~\ref{cor:KIT}:

\begin{proposition}\label{prop:res pi-tf nilp}
The group $G$ is residually $\pi$-torsion-free nilpotent if and only if $A$ is $S_\pi$-torsion-free.
\end{proposition}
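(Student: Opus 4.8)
The plan is to leverage the machinery already assembled: by Lemma~\ref{lem:powers of omega vs lower central series} and Corollary~\ref{cor:KIT}, the group $G=\Z\ltimes_\varphi A$ is residually $\pi$-torsion-free nilpotent precisely when $\bigcap_{n\geq 1}(\omega^n A)[\pi^{-1}]=0$, and by Corollary~\ref{cor:KIT} (applied to the finitely generated $\pi$-torsion-free $R$-module $A$) this intersection vanishes if and only if $A$ is $S_\pi$-torsion-free. So almost everything is in place, and the proof should simply be the concatenation of these two reductions.

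First I would record the chain of equivalences explicitly. By Lemma~\ref{lem:powers of omega vs lower central series}(2), for each $n\geq 1$ we have $\gamma^\pi_{n+1}(G)=(\gamma_{n+1}(G))[\pi^{-1}]\cap A$; intersecting over $n$ and using that the sequence $(\gamma^\pi_{n+1}(G))_n$ is decreasing (so the intersection equals $\bigcap_n \gamma^\pi_n(G)$, since the first term $\gamma^\pi_1(G)=G$ and $\gamma_2^\pi(G)=\gamma^\pi_2(G)$ contribute nothing new), we get $\bigcap_{n\geq 1}\gamma^\pi_n(G) = \bigcap_{n\geq 1}(\gamma_{n+1}(G))[\pi^{-1}]\cap A$. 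By the sandwich $\omega^{n-1}A\geq \gamma_{n+1}(G)\geq \omega^n A$ from part (1) of the same lemma, $\bigcap_n (\gamma_{n+1}(G))[\pi^{-1}] = \bigcap_n (\omega^n A)[\pi^{-1}]$, and since the latter is a subgroup of $A[\pi^{-1}]$ that is already contained in (the image of) $A$ when intersected appropriately — more precisely $\bigcap_n(\omega^n A)[\pi^{-1}]\subseteq A[\pi^{-1}]$ and we intersect with $A$ — I would note that $\bigcap_{n\geq 1}\gamma_n^\pi(G) = \bigcap_{n\geq 1}(\omega^n A)[\pi^{-1}] \cap A$. Then: $G$ is residually $\pi$-torsion-free nilpotent $\iff \bigcap_n \gamma_n^\pi(G)=1 \iff \bigcap_n (\omega^n A)[\pi^{-1}]\cap A = 0$.

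Next I would close the gap between ``$\bigcap_n(\omega^n A)[\pi^{-1}]\cap A=0$'' and ``$\bigcap_n(\omega^n A)[\pi^{-1}]=0$''. Here I would invoke the displayed identity from the excerpt, $(\omega(R)^n M)[\pi^{-1}]=\omega(R[\pi^{-1}])^nM[\pi^{-1}]$, together with the Krull Intersection Theorem in the form of Lemma~\ref{lem:KIT} applied over the ring $R[\pi^{-1}]$ and the module $A[\pi^{-1}]$: the intersection $\bigcap_n \omega(R[\pi^{-1}])^n A[\pi^{-1}]$ consists of those $x$ killed by some element of $S_\pi^*$, and any such $x$, being a $\Z[\pi^{-1}]$-torsion element... no, rather, one observes directly that if $x\in A[\pi^{-1}]$ lies in every $(\omega^n A)[\pi^{-1}]$ then some $\pi$-number multiple of $x$ lies in every $\omega^n A$, hence in $A$, so $x$ itself (being in $A[\pi^{-1}]$ with a $\pi$-multiple in $A\cap\bigcap_n(\omega^nA)[\pi^{-1}]$) vanishes once we know the $A$-part vanishes; conversely the inclusion of the $A$-intersection into the full intersection is trivial. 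This shows $\bigcap_n(\omega^nA)[\pi^{-1}]\cap A=0 \iff \bigcap_n(\omega^nA)[\pi^{-1}]=0$. Finally, Corollary~\ref{cor:KIT}, condition (3) $\iff$ (1), identifies the latter with ``$A$ is $S_\pi$-torsion-free'', completing the proof.

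The only delicate point — and the step I'd expect to need a sentence of care — is the passage between the intersection inside $A$ and the intersection inside $A[\pi^{-1}]$; everything else is a direct citation. In fact this is subsumed in Corollary~\ref{cor:KIT}: its equivalence (1)$\Leftrightarrow$(3) is stated for $A$ itself and already incorporates the localization, so once I have reduced residual $\pi$-torsion-free nilpotence to ``$\bigcap_n(\omega^nA)[\pi^{-1}]=0$'' via Lemma~\ref{lem:powers of omega vs lower central series}, and reconciled that with the ``$\cap A$'' version using that a $\pi$-number is invertible in $\Z[\pi^{-1}]$ so $x\in A[\pi^{-1}]$ with $kx\in A$ for a $\pi$-number $k$ forces nothing unless $kx=0$, the proof is immediate. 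So the write-up is short:

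\begin{proof}
By Lemma~\ref{lem:powers of omega vs lower central series}, $\bigcap_{n\geq 1}\gamma^\pi_n(G) = \bigcap_{n\geq 1}(\gamma_{n+1}(G))[\pi^{-1}]\cap A = \bigcap_{n\geq 1}(\omega^n A)[\pi^{-1}]\cap A$, where the last equality uses the sandwich $\omega^{n-1}A\geq\gamma_{n+1}(G)\geq\omega^n A$. Since a $\pi$-number is a unit in $\Z[\pi^{-1}]$, an element $x\in A[\pi^{-1}]$ with $kx\in A$ for some $\pi$-number $k$ satisfies: $x=0$ iff $kx=0$; applying this with $kx\in\bigcap_n\omega^n A$ (possible as $x\in\bigcap_n(\omega^n A)[\pi^{-1}]$) shows that $\bigcap_{n\geq 1}(\omega^n A)[\pi^{-1}]\cap A=0$ if and only if $\bigcap_{n\geq 1}(\omega^n A)[\pi^{-1}]=0$. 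Thus $G$ is residually $\pi$-torsion-free nilpotent, i.e., $\bigcap_{n\geq 1}\gamma^\pi_n(G)=1$, if and only if $\bigcap_{n\geq 1}(\omega^n A)[\pi^{-1}]=0$. By Corollary~\ref{cor:KIT}, the latter condition is equivalent to $A$ being $S_\pi$-torsion-free.
\end{proof}
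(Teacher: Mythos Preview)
Your approach is correct and the same as the paper's---the paper's argument is just the two sentences preceding the proposition, combining Lemma~\ref{lem:powers of omega vs lower central series} with Corollary~\ref{cor:KIT}. You are more careful than the paper in addressing the ``$\cap A$'' passage, which the paper leaves implicit.

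One slip in your execution of that step: from $x \in \bigcap_n (\omega^n A)[\pi^{-1}]$ and $kx \in A$ you cannot conclude $kx \in \bigcap_n \omega^n A$, since $A \cap (\omega^n A)[\pi^{-1}]$ may be strictly larger than $\omega^n A$ (a single $\pi$-number $k$ need not clear denominators for every $n$ simultaneously). What you do get, and what suffices, is $kx \in A \cap \bigcap_n (\omega^n A)[\pi^{-1}]$, which is $0$ by hypothesis; then $x = 0$ since $k$ is a unit in $\Z[\pi^{-1}]$. With this correction, your proof is complete.
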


From now on suppose that $A$ is torsion-free, that is, $A\cong\Z^d$ for some $d$. Let $P_\varphi=\det(t\id-\varphi)\in\Z[t]$ be the characteristic polynomial of $\varphi$, construed as an automorphism of $A^*:=\Q^d$.
Next we show:

\begin{proposition}\label{prop:res pi-tf nilp, 2}
The group $G$ is residually $\pi$-torsion-free nilpotent if and only if no irreducible factor of $P_\varphi$ is in $S_\pi$.
\end{proposition}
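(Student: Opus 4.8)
The plan is to deduce the statement from Proposition~\ref{prop:res pi-tf nilp}: since $G$ is residually $\pi$-torsion-free nilpotent if and only if the $R$-module $A=\Z^d$ (with $t$ acting by $\varphi$) is $S_\pi$-torsion-free, it suffices to prove that $A$ is $S_\pi$-torsion-free precisely when no irreducible factor $P\in\Z[t]$ of $P_\varphi$ lies in $S_\pi$. I will use throughout that for $P\in\Z[t]\subseteq R$ one has $\epsilon(P)=P(1)$, so that $P\in S_\pi$ means exactly that $P(1)$ is a nonzero integer all of whose prime divisors lie in $\pi$; note also that every nonzero integer dividing a $\pi$-number is again a $\pi$-number.

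For the direction ``no irreducible factor of $P_\varphi$ lies in $S_\pi$ $\Rightarrow$ $A$ is $S_\pi$-torsion-free'' I would argue by contraposition. Suppose $x\in A\setminus\{0\}$ and $s\in S_\pi$ satisfy $sx=0$. Let $q\in\Q[t]$ be the monic generator of the ideal $\{f\in\Q[t]:f(\varphi)x=0\}$, which is nonzero because $P_\varphi(\varphi)x=0$ by the Cayley--Hamilton theorem; then $q\mid P_\varphi$ in $\Q[t]$, so by Gauss's lemma $q$ is monic in $\Z[t]$ and divides $P_\varphi$ there. Write $s=t^{-a}\sigma$ with $a\geq 0$ and $\sigma\in\Z[t]$; applying $\varphi^a$ to $sx=0$ gives $\sigma(\varphi)x=0$, hence $q\mid\sigma$ in $\Q[t]$, say $\sigma=q\tau$ with $\tau\in\Q[t]$. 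The crucial point is that $\tau$ is in fact integral: since $q\in\Z[t]$ is monic and $\sigma\in\Z[t]$, division of $\sigma$ by $q$ stays within $\Z[t]$, and uniqueness of polynomial division over $\Q$ forces $\tau\in\Z[t]$. Now $s=q\,(t^{-a}\tau)$, so $\epsilon(s)=q(1)\,\tau(1)$ with $\tau(1)\in\Z$. As $\epsilon(s)$ is a $\pi$-number, so is $q(1)$, and hence so is $P(1)$ for every irreducible factor $P$ of $q$. Since $x\neq 0$ the polynomial $q$ is non-constant, so such a $P$ exists, and it is an irreducible factor of $P_\varphi$ lying in $S_\pi$ --- a contradiction.

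For the reverse direction, assume $P\in\Z[t]$ is an irreducible factor of $P_\varphi$ with $P(1)$ a $\pi$-number, so $P\in S_\pi$. The irreducible factors of the characteristic polynomial of $\varphi$ are exactly those of its minimal polynomial, so $P$ divides the minimal polynomial of $\varphi$ on $A^*=\Q^d$; hence $P(\varphi)$ is not injective on $\Q^d$, and choosing $0\neq v\in\Q^d$ with $P(\varphi)v=0$ and clearing denominators produces $0\neq x\in A$ with $Px=0$. Since $P\in S_\pi$, this shows that $A$ is not $S_\pi$-torsion-free.

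Combining the two directions with Proposition~\ref{prop:res pi-tf nilp} gives the result. I expect the one genuine obstacle to be the integrality of $\tau$ (equivalently, the passage from $h:=t^{-a}\tau\in\Q[t,t^{-1}]$ to $h\in R$); everything else is standard module theory over the principal ideal domains $\Q[t]$ and $\Q[t,t^{-1}]$ together with Gauss's lemma, and it is the monicity of $q$, inherited from that of $P_\varphi$, that makes this step work.
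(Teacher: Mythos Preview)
Your proof is correct. Both directions are sound: the forward direction via the order ideal of a putative $S_\pi$-torsion element, together with the monicity argument forcing $\tau\in\Z[t]$, is clean and complete; the converse via the minimal polynomial is standard.

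Your approach differs genuinely from the paper's. The paper invokes the structure theorem for finitely generated modules over the PID $\Q[t^{\Z}]$ to write $A^*=A\otimes\Q$ as a direct sum of cyclic modules $R^*/P_iR^*$ with invariant factors $P_i$, then passes to a finite-index $R$-submodule $\bigoplus R/P_iR$ of $A$ and checks the criterion on each cyclic summand. You instead argue directly with a single element: given a torsion relation $sx=0$, you extract the monic annihilator $q$ of $x$ and use monic division to show $q(1)\mid\epsilon(s)$. Your route avoids the structure theorem entirely, replacing it with Cayley--Hamilton and Gauss's lemma; it is more elementary and slightly more self-contained. The paper's route, on the other hand, makes the reduction to the cyclic case explicit and would generalize more readily if one wanted finer information about which invariant factors cause the failure.
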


\begin{proof}
This immediately follows from Proposition~\ref{prop:res pi-tf nilp} if the $R$-module $A$ has the form $A=R/P_\varphi R$.
In general,
we apply the structure theorem for finitely generated modules over the principal ideal domain $R^*:=\Q[t^\Z]$ to $A^*$ (considered as an $R^*$-module as usual) and obtain
$$A^* \cong R^*/P_1R^*\oplus \cdots \oplus R^*/P_lR^*$$
where the $P_i$ are the invariant factors of $\varphi$:
$P_i\neq 1$, $P_i\in \Z[t]$ monic, $P_i|P_{i+1}$, and $P_\varphi=P_1\cdots P_l$.
Then
$A$ has an $R$-submodule, of finite index in $A$ (as an abelian group), which is isomorphic to
$R/P_1R \oplus \cdots \oplus R/P_lR$.
Clearly, if $B$ is any finite-index $R$-submodule of $A$, then
$A$ is $S_\pi$-torsion-free if and only if $B$ is $S_\pi$-torsion-free. (We have an injective $R$-module morphism
$x\mapsto kx\colon A\cong \Z^d\to B$, where $k=[A:B]$.)
Thus $A$ is $S_\pi$-torsion-free if and only if each $R$-module $R/P_iR$ is $S_\pi$-torsion-free.
\end{proof}

The characterization given in Proposition~\ref{prop:res pi-tf nilp, 2} gives rise to a simple algorithm to decide, given a matrix representing $\varphi$ and a computable set $\pi$ of prime numbers, whether $G$ is residually $\pi$-torsion-free nilpotent. (An algorithm to decide whether a given finitely generated metabelian group is residually nilpotent is given in \cite{BCR94}.)

\medskip

By Examples~\ref{ex:pi-numbers} we also have to following corollary, which, in particular, proves  part  (1) of Theorem~\ref{thm:residual properties of mapping tori}.

\begin{corollary} \mbox{} \label{cor:res cor}

\begin{enumerate}
\item $G$ is residually nilpotent if and only if $\epsilon(P)\neq\pm 1$ for every irreducible factor $P$ of $P_\varphi$;
\item $G$ is residually torsion-free nilpotent if and only if $P_\varphi$ is a power of $t-1$;
\item $G$ is residually $p$ if and only if $\epsilon(P)\in p\Z$ for all irreducible $P$ dividing $P_\varphi$.
\end{enumerate}
\end{corollary}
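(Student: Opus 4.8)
## Proof Proposal for Corollary~\ref{cor:res cor}

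The plan is to deduce all three parts directly from Proposition~\ref{prop:res pi-tf nilp, 2}, the case analysis of Examples~\ref{ex:pi-numbers}, and the basic relation between residual $p$-ness (resp.\ residual nilpotence) of a finitely generated group and being residually $\pi$-torsion-free nilpotent for the appropriate set $\pi$. The one extra ingredient needed is the description, for each relevant $\pi$, of which irreducible polynomials $P\in\Z[t]$ lie in $S_\pi=\{r\in R:\epsilon(r)\text{ is a }\pi\text{-number}\}$; since $\epsilon(P)=P(1)$, this is purely a statement about the integer $P(1)$.

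First I would handle part~(3), which is the substantive one and whose statement ``$G$ is residually $p$ if and only if $\epsilon(P)\in p\Z$ for every irreducible factor $P$ of $P_\varphi$'' is exactly Theorem~\ref{thm:residual properties of mapping tori}(1). Take $\pi=\pi_p$, the set of all primes different from $p$. By Examples~\ref{ex:pi-numbers}(3), a finitely generated group is residually $p$ if and only if it is residually $\pi_p$-torsion-free nilpotent; moreover $A\cong\Z^d$ is automatically $\pi_p$-torsion-free, so Proposition~\ref{prop:res pi-tf nilp, 2} applies and gives: $G$ is residually $p$ iff no irreducible factor $P$ of $P_\varphi$ lies in $S_{\pi_p}$. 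Now $P\in S_{\pi_p}$ means $\epsilon(P)=P(1)$ is a $\pi_p$-number, i.e.\ a nonzero integer all of whose prime divisors are $\ne p$, i.e.\ $P(1)\in\Z\setminus p\Z$ (including the case $P(1)=\pm1$, and excluding $P(1)=0$). Hence ``no irreducible factor lies in $S_{\pi_p}$'' says precisely that every irreducible factor $P$ has $P(1)\equiv 0\bmod p$ (note $P(1)=0$ is allowed and still satisfies $p\mid P(1)$). This is the claim. The small point to be careful about is the boundary case $P(1)=0$: such a $P$ is not in $S_{\pi_p}$ (since $0$ is not a $\pi$-number), and it does satisfy $\epsilon(P)\in p\Z$, so both sides of the equivalence treat it the same way — I would remark on this explicitly.

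For part~(1), take $\pi=\emptyset$. By Examples~\ref{ex:pi-numbers}(1) every group is $\emptyset$-torsion-free, so Proposition~\ref{prop:res pi-tf nilp, 2} gives: $G$ is residually $\emptyset$-torsion-free nilpotent iff no irreducible factor of $P_\varphi$ lies in $S_\emptyset$. The $\emptyset$-numbers are exactly $\pm1$, so $P\in S_\emptyset$ iff $\epsilon(P)=\pm1$. It remains to note that, for a finitely generated group, being residually $\emptyset$-torsion-free nilpotent is the same as being residually nilpotent; here I would invoke the standard fact that a finitely generated nilpotent group is residually finite and its torsion subgroup is finite, so the $\emptyset$-torsion-free quotients suffice to detect all nontrivial elements detectable in nilpotent quotients (alternatively, this is immediate from $\gamma_n^\emptyset(G)=\gamma_n(G)$, which follows from the definition of $\gamma^\pi_n$ since there are no nontrivial $\emptyset$-numbers, so $\operatorname{tor}_\emptyset$ is trivial). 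For part~(2), take $\pi=$ the set of all primes; by Examples~\ref{ex:pi-numbers}(2) residually $\pi$-torsion-free nilpotent is residually torsion-free nilpotent, and $P\in S_\pi$ iff $\epsilon(P)=P(1)\ne0$. So $G$ is residually torsion-free nilpotent iff every irreducible factor $P$ of $P_\varphi$ has $P(1)=0$, i.e.\ iff every irreducible factor is $t-1$ (the only monic irreducible with a root at $1$), i.e.\ iff $P_\varphi$ is a power of $t-1$.

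The main obstacle is not conceptual but bookkeeping: keeping straight, for each of the three choices $\pi\in\{\emptyset,\ \text{all primes},\ \pi_p\}$, the precise meaning of ``$\epsilon(P)$ is a $\pi$-number'' as a condition on the integer $P(1)$, and correctly negating it (``no irreducible factor is in $S_\pi$'') — in particular making sure the degenerate value $P(1)=0$ is placed on the correct side in each case, and that ``residually $\pi$-torsion-free nilpotent'' is correctly translated back to ``residually $p$'' / ``residually nilpotent'' / ``residually torsion-free nilpotent'' via Examples~\ref{ex:pi-numbers}. Once those dictionary entries are laid out, each of (1)--(3) is a one-line consequence of Proposition~\ref{prop:res pi-tf nilp, 2}.
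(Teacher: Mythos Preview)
Your proposal is correct and follows exactly the approach the paper takes: the paper simply states that the corollary follows from Proposition~\ref{prop:res pi-tf nilp, 2} via Examples~\ref{ex:pi-numbers}, and your argument spells out precisely this derivation for each of the three choices $\pi=\emptyset$, $\pi=\{\text{all primes}\}$, and $\pi=\pi_p$. The bookkeeping you flag (the value $P(1)=0$, and the identification $\gamma_n^\emptyset(G)=\gamma_n(G)$) is handled correctly.
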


This in turn now yields part (2) of Theorem~\ref{thm:residual properties of mapping tori}:

\begin{corollary}\label{cor:res cor, 2}
The following are equivalent:
\begin{enumerate}
\item $\varphi$ is unipotent;
\item $G$ is nilpotent;
\item $G$ is residually torsion-free nilpotent;
\item $G$ is residually $p$ for every $p$;
\item $G$ is residually $p$ for infinitely many $p$.
\end{enumerate}
In this case, $G$ is nilpotent of class at most $d+1$.
\end{corollary}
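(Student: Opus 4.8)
The plan is to prove all five statements equivalent by running the cycle $(1)\Rightarrow(4)\Rightarrow(5)\Rightarrow(1)$ together with the two side-equivalences $(1)\Leftrightarrow(3)$ and $(1)\Leftrightarrow(2)$; the bound on the nilpotency class will be read off from the proof of $(1)\Rightarrow(2)$. Throughout I use that, as recalled in the introduction, $\varphi$ is unipotent precisely when its only eigenvalue is $1$, i.e.\ precisely when $P_\varphi=(t-1)^d$ (equivalently, $P_\varphi$ is a power of $t-1$, since $\deg P_\varphi=d$).

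With this in hand $(1)\Leftrightarrow(3)$ is nothing but Corollary~\ref{cor:res cor}(2). For $(1)\Rightarrow(4)$: if $P_\varphi=(t-1)^d$, then the only irreducible factor of $P_\varphi$ in $\Z[t]$ is $t-1$, and $\epsilon(t-1)=1-1=0\in p\Z$ for every prime $p$, so Corollary~\ref{cor:res cor}(3) yields $(4)$; (alternatively one can get $(3)\Rightarrow(4)$ from the classical fact that a finitely generated torsion-free nilpotent group is residually $p$ for every $p$). The implication $(4)\Rightarrow(5)$ is trivial. For $(5)\Rightarrow(1)$, suppose $G$ is residually $p$ for infinitely many $p$ and let $P\in\Z[t]$ be a monic irreducible factor of $P_\varphi$. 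By Corollary~\ref{cor:res cor}(3), $P(1)=\epsilon(P)$ lies in $p\Z$ for each of the infinitely many primes $p$ in question; hence $P(1)$ is divisible by infinitely many primes, so $P(1)=0$. Thus $(t-1)\mid P$ in $\Q[t]$, and since $P$ and $t-1$ are monic, Gauss's Lemma gives $(t-1)\mid P$ in $\Z[t]$, whence $P=t-1$ by irreducibility. As $P_\varphi$ is monic of degree $d$ with every irreducible factor equal to $t-1$, we conclude $P_\varphi=(t-1)^d$, i.e.\ $\varphi$ is unipotent.

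It remains to handle $(1)\Leftrightarrow(2)$ together with the class bound, and for this I would use Lemma~\ref{lem:powers of omega vs lower central series}(1). Viewing $A\cong\Z^d$ as an $R$-module with $t$ acting as $\varphi$, one has $\omega^nA=(\id-\varphi)^n(A)$ for all $n\ge 1$ (as $\omega^n=(1-t)^nR$ is principal). If $\varphi$ is unipotent then $(\id-\varphi)^d=0$, so $\omega^dA=0$; substituting $n=d+1$ in the chain $\omega^{n-1}A\ge\gamma_{n+1}(G)\ge\omega^nA$ gives $\gamma_{d+2}(G)=1$, so $G$ is nilpotent of class at most $d+1$ — this is $(1)\Rightarrow(2)$ with the asserted bound. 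Conversely, if $G$ is nilpotent, say $\gamma_k(G)=1$ with $k\ge 2$, the same chain (with $n=k-1$) gives $\omega^{k-1}A\le\gamma_k(G)=1$, i.e.\ $(\id-\varphi)^{k-1}$ annihilates $A=\Z^d$ and hence vanishes as a $d\times d$ matrix; thus $\varphi-\id$ is nilpotent and $\varphi$ is unipotent, giving $(2)\Rightarrow(1)$.

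The whole argument is essentially bookkeeping on top of Corollary~\ref{cor:res cor} and Lemma~\ref{lem:powers of omega vs lower central series}. I expect the only step needing genuine care to be $(5)\Rightarrow(1)$: one must first notice that an integer divisible by infinitely many primes must be $0$, and then use primitivity (Gauss's Lemma) to pin the irreducible factor down to $t-1$ exactly, rather than only up to a rational scalar; a secondary point is keeping the directions of the inclusions in Lemma~\ref{lem:powers of omega vs lower central series}(1) straight when extracting both the nilpotency of $G$ and the class bound.
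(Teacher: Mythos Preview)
Your proof is correct and follows essentially the same approach as the paper: both derive the equivalence of (1), (3), (4), (5) directly from Corollary~\ref{cor:res cor}, and both obtain (1)~$\Rightarrow$~(2) together with the class bound from Lemma~\ref{lem:powers of omega vs lower central series}(1). The only cosmetic difference is that the paper closes the cycle via the observation (2)~$\Rightarrow$~(3) (since $G$ is torsion-free), whereas you prove (2)~$\Rightarrow$~(1) directly from the same lemma; either route is immediate.
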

\begin{proof}
The equivalence of statements (1), (3), (4) and (5) is immediate from the previous corollary. The implication (2)~$\Rightarrow$~(3) is obvious ($G$ is torsion-free), and (1)~$\Rightarrow$~(2), as well as the addendum, are a consequence of Lemma~\ref{lem:powers of omega vs lower central series},~(1).
\end{proof}

Now fix a prime $p$. We let $\ol{A}:=A/pA\cong \F_p^d$, and we denote by $\ol{\varphi}$ the automorphism of $\ol{A}$ induced by $\varphi$.
We conclude this section with a short discussion on the relation between the residual properties of $G=\Z \ltimes_\varphi A$
and $\ol{G}:=\Z\ltimes_{\ol{\varphi}} \ol{A}$. This discussion is independent of Theorem~\ref{thm:residual properties of mapping tori}.
We recall a well-known fact:

\begin{lemma}\label{lem:res p mod p}
The following are equivalent:
\begin{enumerate}
\item $\ol{G}:=\Z\ltimes_{\ol{\varphi}} \ol{A}$ is residually $p$;
\item $\ol{\varphi}$ is unipotent;
\item $\ol{\varphi}^{p^k}=\id$ for some integer $k>0$.
\end{enumerate}
\end{lemma}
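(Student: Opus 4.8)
The plan is to run the cycle $(2)\Leftrightarrow(3)$, $(2)\Rightarrow(1)$, $(1)\Rightarrow(3)$. The equivalence $(2)\Leftrightarrow(3)$ is elementary linear algebra over $\F_p$: if $\ol{\varphi}^{p^k}=\id$, then the minimal polynomial of $\ol{\varphi}$ divides $t^{p^k}-1=(t-1)^{p^k}$ in $\F_p[t]$, hence is a power of $t-1$ of degree $\le d$, so $(\ol{\varphi}-\id)^d=0$; conversely, if $(\ol{\varphi}-\id)^d=0$, then choosing $k$ with $p^k\ge d$ and using that $\id$ and $\ol{\varphi}-\id$ commute in the characteristic-$p$ ring $M_d(\F_p)$, together with $\binom{p^k}{j}\equiv 0\bmod p$ for $0<j<p^k$, gives $\ol{\varphi}^{p^k}=(\id+(\ol{\varphi}-\id))^{p^k}=\id+(\ol{\varphi}-\id)^{p^k}=\id$.

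For $(2)\Rightarrow(1)$ I would first note that the computation in the proof of Lemma~\ref{lem:powers of omega vs lower central series}(1) goes through verbatim with $(A,\varphi)$ replaced by $(\ol{A},\ol{\varphi})$, yielding $\gamma_{n+1}(\ol{G})\subseteq(1-\ol{\varphi})^{n-1}\ol{A}$ for all $n\ge 1$. Since $\ol{\varphi}$ is unipotent, $(1-\ol{\varphi})^d\ol{A}=0$, so $\gamma_{d+2}(\ol{G})=1$ and $\ol{G}$ is a finitely generated nilpotent group. Its only torsion elements lie in $\ol{A}$ (because $\ol{G}/\ol{A}\cong\Z$ is torsion-free) and hence have $p$-power order; thus $\operatorname{tor}_{\pi_p}(\ol{G})=1$. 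Since for a finitely generated nilpotent group $H$ one has $\gamma^{\pi_p}_n(H)=\operatorname{tor}_{\pi_p}(H)$ as soon as $n$ exceeds the nilpotency class, it follows that $\bigcap_{n\ge 1}\gamma^{\pi_p}_n(\ol{G})=1$, i.e.\ $\ol{G}$ is residually $\pi_p$-torsion-free nilpotent, and by Examples~\ref{ex:pi-numbers}(3) this is equivalent to $\ol{G}$ being residually $p$.

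For $(1)\Rightarrow(3)$, the idea is that residual $p$-ness lets us capture all of the \emph{finite} group $\ol{A}$ inside a single finite $p$-quotient. Concretely, intersecting the kernels of finitely many morphisms $\ol{G}\to P$ with $P$ a $p$-group, one for each non-trivial element of $\ol{A}$, produces a normal subgroup $K\trianglelefteq\ol{G}$ of $p$-power index with $K\cap\ol{A}=1$. The image of $(1,0)$ in the finite $p$-group $\ol{G}/K$ has order a power of $p$, say $p^k$, so $(p^k,0)\in K$. Then for every $a\in\ol{A}$ the element $(0,\ol{\varphi}^{p^k}(a))=(p^k,0)(0,a)(p^k,0)^{-1}$ has the same image in $\ol{G}/K$ as $(0,a)$, whence $(0,\ol{\varphi}^{p^k}(a)-a)\in K\cap\ol{A}=1$; since $a$ was arbitrary, $\ol{\varphi}^{p^k}=\id$.

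None of the steps is genuinely hard; the one requiring the most care is $(1)\Rightarrow(3)$, where one must pass from the a priori infinite residual-$p$ approximation of $\ol{G}$ to one fixed finite $p$-quotient that is injective on $\ol{A}$, and then recognise that the order of $\ol{\varphi}$ is controlled by the structure of that quotient. The remaining work is just bookkeeping with the lower central series and the $\gamma^\pi_n$-filtration already introduced.
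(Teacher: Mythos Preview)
Your argument is correct. The equivalence $(2)\Leftrightarrow(3)$ is handled exactly as the paper does (``a familiar characterization of unipotent matrices over fields of characteristic $p$''). For $(1)\Leftrightarrow(2)$, however, the paper simply points to the method of Proposition~\ref{prop:res pi-tf nilp, 2}, i.e., one repeats the Krull-intersection / invariant-factor analysis over the PID $\F_p[t^\Z]$ and reads off that $\ol{G}$ is residually $p$ exactly when every irreducible factor of $P_{\ol\varphi}$ has $\epsilon(P)=0$ in $\F_p$, which forces $P=t-1$. You take a different and more elementary route: for $(2)\Rightarrow(1)$ you use the lower-central-series bound directly to get nilpotence and then the $\gamma^{\pi_p}$-filtration, and for $(1)\Rightarrow(3)$ you exploit the finiteness of $\ol{A}$ to produce a single finite $p$-quotient of $\ol{G}$ that is injective on $\ol{A}$, after which conjugation by $(p^k,0)$ forces $\ol{\varphi}^{p^k}=\id$. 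This second step is the genuinely new idea relative to the paper; it bypasses all the module-theoretic machinery and would not be available in the torsion-free setting of Corollary~\ref{cor:res cor, 2} (where $A$ is infinite), which is precisely why the paper prefers the uniform module-theoretic argument. Either approach is fine here.
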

\begin{proof}
The equivalence of (1) and (2) may be seen, for example, as in the proof of Proposition~\ref{prop:res pi-tf nilp, 2}.
The equivalence of (2) and (3) is a familiar characterization of unipotent matrices over fields of characteristic $p$.
\end{proof}

By this lemma and Corollary~\ref{cor:res cor},~(3) we obtain:

\begin{corollary}\label{cor:res p mod p}
If $\ol{G}$ is residually $p$, then so is $G$.
\end{corollary}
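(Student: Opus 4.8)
The plan is to deduce Corollary~\ref{cor:res p mod p} directly from the two characterizations already at hand: Lemma~\ref{lem:res p mod p} tells us that the hypothesis ``$\ol G$ is residually $p$'' is equivalent to $\ol\varphi$ being unipotent over $\F_p$, and Corollary~\ref{cor:res cor},~(3) tells us that the conclusion ``$G$ is residually $p$'' is equivalent to $\epsilon(P)\in p\Z$ for every irreducible factor $P\in\Z[t]$ of $P_\varphi$. So the whole statement reduces to the following purely ring-theoretic implication: if the reduction $\ol\varphi$ of $\varphi$ modulo $p$ is unipotent, then $P(1)\equiv 0\bmod p$ for every irreducible factor $P$ of $P_\varphi$.

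First I would observe that reduction modulo $p$ is a ring homomorphism $\Z[t]\to\F_p[t]$, so the characteristic polynomial $\ol{P_\varphi}$ of $\ol\varphi$ over $\F_p$ is just the image of $P_\varphi$ under this reduction; equivalently, if $P_\varphi=\prod_i P_i$ is a factorization into irreducibles over $\Z$, then $\ol{P_\varphi}=\prod_i\ol{P_i}$ in $\F_p[t]$. Next, since $\ol\varphi$ is unipotent it has $1$ as its only eigenvalue (this is exactly the equivalence recalled in the introduction, valid over any field), so $\ol{P_\varphi}(t)=(t-1)^d$ in $\F_p[t]$. Because $\F_p[t]$ is a UFD and $t-1$ is prime there, each factor $\ol{P_i}$ must itself be a power of $t-1$, in particular $\ol{P_i}(1)=0$ in $\F_p$. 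But $\ol{P_i}(1)$ is the image of $P_i(1)=\epsilon(P_i)$ under $\Z\to\F_p$, so $\epsilon(P_i)\equiv 0\bmod p$ for each $i$. This is precisely the condition in Corollary~\ref{cor:res cor},~(3), and hence $G$ is residually $p$.

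There is essentially no hard step here; the only point requiring a little care is making sure the passage between ``irreducible factors of $P_\varphi$ over $\Z$'' and ``irreducible factors over $\F_p$'' is handled correctly, since an integer-irreducible $P_i$ need not stay irreducible mod $p$ — but that does not matter, because all we need is that $P_i$ divides $P_\varphi$, hence $\ol{P_i}$ divides $(t-1)^d$, hence $\ol{P_i}$ is a power of $t-1$. One could alternatively phrase the argument without invoking Corollary~\ref{cor:res cor} at all, by noting that $\varphi$ unipotent mod $p$ forces $S_p$ to contain no irreducible factor of $P_\varphi$ (an element $r$ lies in $S_p$ iff $p\nmid\epsilon(r)$, i.e.\ iff $\ol{\epsilon(r)}\neq 0$, i.e.\ iff $\ol r(1)\neq 0$), and then appealing to Proposition~\ref{prop:res pi-tf nilp, 2} together with Examples~\ref{ex:pi-numbers},~(3); but the route through the already-stated corollaries is shortest and is the one I would write up.
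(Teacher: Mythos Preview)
Your argument is correct and is exactly the route the paper takes: it simply records the corollary as an immediate consequence of Lemma~\ref{lem:res p mod p} and Corollary~\ref{cor:res cor},~(3), and you have spelled out the one-line ring-theoretic link (that $\ol\varphi$ unipotent forces $\ol{P_\varphi}=(t-1)^d$, hence every irreducible $P\mid P_\varphi$ has $P(1)\equiv 0\bmod p$) which the paper leaves implicit. Your care about the monic factors $\ol{P_i}$ being genuine powers of $t-1$ (not units or zero) is justified since $P_\varphi$ is monic, so its irreducible factors in $\Z[t]$ may be taken monic.
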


\begin{example}
In general, the implication in the previous corollary cannot be reversed. For example, consider
$\varphi=\left(\begin{smallmatrix} 0 & 1 \\ 1 & a\end{smallmatrix}\right)$ where $a\in\Z$ is non-zero.
Then $P_\varphi=t^2-at-1$ is irreducible. Let $p\ne 2$ be a prime dividing $a$;
then $G$ is residually $p$, whereas $\ol{G}$ is not residually $p$. 
\end{example}

\subsection{Finite-index subgroups of mapping tori}
As before let $A=\Z^d$ and $\varphi\in\aut(A)$.
Given integers $k,n>0$, the subset $k\Z \times nA$ of $\Z\times A$ is the underlying set of a subgroup of $G=\Z\ltimes_\varphi A$, which we denote by $G_{k,n}$.
It is easy to see that  $$\Z\times A\to k\Z\times nA\colon (i,a)\mapsto (ki,na)$$ is a group isomorphism $\Z\ltimes_{\varphi^k} A\xrightarrow{\cong} G_{k,n}$.
If $n=1$, then $G_{k,n}\triangleleft G$; in fact, $G_{k,1}$ is the kernel of the natural morphism $G\to \Z\to\Z/k\Z$.
Clearly, if $H$ is a finite-index subgroup of $G$, then $G_{k,k}\leq H$ where $k=[G:H]$.
These remarks show the following well-known lemma:

\begin{lemma}
For every $p$, the group $G$ is residually $p$-by-finite cyclic.
\end{lemma}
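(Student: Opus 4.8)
The statement asserts that for every prime $p$, the group $G=\Z\ltimes_\varphi A$ (with $A=\Z^d$) has a finite-index subgroup that is residually $p$, and moreover that such a subgroup can be taken to be an extension of a $p$-group by a finite cyclic group (''residually $p$-by-finite cyclic''). The plan is to exhibit an explicit such subgroup using the subgroups $G_{k,1}$ introduced just above: recall that $G_{k,1}\cong\Z\ltimes_{\varphi^k}A$ is the kernel of $G\to\Z/k\Z$, hence normal of index $k$ in $G$. So it suffices to find, for each $p$, an integer $k>0$ such that $\Z\ltimes_{\varphi^k}A$ is residually $p$.

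First I would invoke Corollary~\ref{cor:res cor},~(3): $\Z\ltimes_{\varphi^k}A$ is residually $p$ if and only if $\epsilon(P)\in p\Z$ for every irreducible factor $P$ of the characteristic polynomial $P_{\varphi^k}$ of $\varphi^k$. The key observation is that $\epsilon(P)=P(1)$, and $P_{\varphi^k}(1)=\det(\id-\varphi^k)$, so the condition is controlled by the eigenvalues of $\varphi$: if $\lambda_1,\dots,\lambda_d\in\overline\Q$ are the eigenvalues of $\varphi$, then $P_{\varphi^k}(1)=\prod_{i}(1-\lambda_i^k)$. More precisely, for each irreducible factor $P$ of $P_{\varphi^k}$, the value $P(1)$ is a product of terms $1-\lambda_i^k$ over the roots of that factor, so it is enough to arrange that $P(1)\equiv 0\bmod p$ for each irreducible $P$. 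A clean way to do this is to choose $k$ so that $\varphi^k\equiv\id\bmod p$, i.e. so that $k$ is a multiple of the order of $\ol{\varphi}\in\gl_d(\F_p)$; then every entry of $\id-\varphi^k$ is divisible by $p$, hence so is $\det(\id-\varphi^k)=P_{\varphi^k}(1)$, and in fact $P(1)\in p\Z$ for each irreducible factor $P$ of $P_{\varphi^k}$ — this last point follows because reducing mod $p$ kills $P_{\varphi^k}$, so it kills each of its $\Z[t]$-irreducible factors (a monic factorization in $\Z[t]$ reduces to a factorization in $\F_p[t]$, and $P_{\varphi^k}$ reduces to $(t-1)^d$ times a unit... ). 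Cleaner still: apply Lemma~\ref{lem:res p mod p} directly — with $k$ a multiple of the order of $\ol\varphi$ in $\gl_d(\F_p)$, the reduction $\ol{\varphi^k}=\ol\varphi{}^k=\id$ is unipotent, so $\ol{G_{k,1}}=\Z\ltimes_{\ol{\varphi^k}}\ol A$ is residually $p$ by Lemma~\ref{lem:res p mod p}, and then Corollary~\ref{cor:res p mod p} gives that $G_{k,1}$ itself is residually $p$.

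That handles residual $p$-ness of a finite-index (indeed normal, index $k$) subgroup. For the ''$p$-by-finite cyclic'' refinement, I would note that $G_{k,1}\cong\Z\ltimes_{\varphi^k}A$ surjects onto $\Z$ (the first coordinate) with kernel $A\cong\Z^d$, and a residually-$p$ group that is an extension $0\to\Z^d\to G_{k,1}\to\Z\to 0$ is residually (finite $p$-group) in a way compatible with this extension: intersecting the defining family of maps to $p$-groups with the $\Z$ factor, one sees $G_{k,1}$ is approximated by quotients which are ($p$-group)-by-($\Z/p^m$) — concretely, the quotient of $G_{k,1}$ by $p^m A$ together with the congruence $\Z\to\Z/p^m\Z$ on the base is a $p$-group-by-(finite cyclic) group, and these quotients separate points since $G_{k,1}$ is residually $p$ and $\bigcap_m p^m A=0$. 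This is the routine bookkeeping part.

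The main obstacle is really just pinning down the correct elementary number-theoretic reduction in the second paragraph — i.e. being sure that ''$\ol\varphi$ has finite order in $\gl_d(\F_p)$'' (true, since $\gl_d(\F_p)$ is finite) suffices to force $\epsilon(P)\in p\Z$ for *every* irreducible factor $P$ of $P_{\varphi^k}$, not merely for $P_{\varphi^k}$ itself. Since the paper has already packaged exactly this implication as Corollary~\ref{cor:res p mod p} (via Lemma~\ref{lem:res p mod p}), I would lean on those results rather than redo the computation, so in fact the whole argument collapses to: pick $k=\#\gl_d(\F_p)$ (or the order of $\ol\varphi$); then $\ol{\varphi}^k=\id$ is unipotent; by Lemma~\ref{lem:res p mod p} and Corollary~\ref{cor:res p mod p}, $G_{k,1}\cong\Z\ltimes_{\varphi^k}A$ is residually $p$; it is normal of index $k$ in $G$ and is an extension of $\Z^d$ by $\Z$, hence is residually $p$-by-finite cyclic by the preceding remark. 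There is genuinely no hard step; the lemma is labelled ''well-known'' precisely because the pieces are all in place.
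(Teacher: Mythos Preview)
Your core argument is exactly the paper's: choose $k$ to be the order of $\ol{\varphi}$ in $\gl_d(\F_p)$, so that $\ol{\varphi^k}=\id$; then by Lemma~\ref{lem:res p mod p} and Corollary~\ref{cor:res p mod p} the normal subgroup $G_{k,1}\cong\Z\ltimes_{\varphi^k}A$ is residually $p$, and $G/G_{k,1}\cong\Z/k\Z$ is finite cyclic. That is the entire proof in the paper.

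However, you have misparsed the statement, and this leads you to add a paragraph of unnecessary and somewhat muddled ``bookkeeping.'' The phrase ``$G$ is residually $p$-by-finite cyclic'' is to be read as ``$G$ is (residually $p$)-by-(finite cyclic)'': there exists a normal subgroup $N\trianglelefteq G$ with $N$ residually $p$ and $G/N$ finite cyclic. It does \emph{not} mean that some subgroup of $G$ is an extension of a $p$-group by a finite cyclic group, nor that $G$ is residually (($p$-group)-by-(finite cyclic)). So once you have that $G_{k,1}$ is residually $p$ and has finite cyclic quotient in $G$, you are finished; the discussion about $G_{k,1}$ being approximated by quotients of the form ($p$-group)-by-$(\Z/p^m)$ is not needed and does not address the actual claim. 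Your detour through eigenvalues and $\det(\id-\varphi^k)$ in the second paragraph is also superfluous, as you yourself note: Corollary~\ref{cor:res p mod p} already packages that implication.
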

\begin{proof}
Fix $p$; we employ the notation introduced at the end of the last subsection.
Let $k$ be the order of the automorphism $\ol{\varphi}$ of $\ol{A}$ induced by $\varphi$. Then $\ol{\varphi^k}=\id$, hence $G_{k,1}\cong \Z\ltimes_{\varphi^k} A$ is residually $p$ by Lemma~\ref{lem:res p mod p} and Corollary~\ref{cor:res p mod p}, and $G/G_{k,1}\cong\Z/k\Z$.
\end{proof}

In contrast to this, we have:

\begin{proposition}
The following are equivalent:
\begin{enumerate}
\item $G$ has a finite-index subgroup which is residually $p$ for every $p$;
\item $G$ is virtually nilpotent;
\item $\varphi$ is quasi-unipotent.
\end{enumerate}
\end{proposition}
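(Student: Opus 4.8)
The plan is to prove the three equivalences by exploiting the structure already developed, together with the standard fact that a finitely generated virtually nilpotent group has a characteristic finite-index nilpotent subgroup. First I would establish (3)~$\Rightarrow$~(2): if $\varphi$ is quasi-unipotent, then $\varphi^k$ is unipotent for some $k>0$, so by Corollary~\ref{cor:res cor, 2} the subgroup $G_{k,1}\cong \Z\ltimes_{\varphi^k} A$ is nilpotent, and it has finite index in $G$; hence $G$ is virtually nilpotent. For (2)~$\Rightarrow$~(3): suppose $G$ has a finite-index nilpotent subgroup $H$. After replacing $H$ by $G_{k,k}$ with $k=[G:H]$ (which is contained in $H$ and therefore also nilpotent, using that subgroups of nilpotent groups are nilpotent), we may assume $H=G_{k,k}\cong \Z\ltimes_{\varphi^k}(kA)$. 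Since $kA\cong\Z^d$ and the action of $\varphi^k$ on $kA$ is conjugate to its action on $A$, nilpotence of $H$ forces $\varphi^k$ to be unipotent by Corollary~\ref{cor:res cor, 2}; thus $\varphi$ is quasi-unipotent.

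Next I would handle the equivalence of (1) with the other two. The implication (2)~$\Rightarrow$~(1) is immediate: a finitely generated nilpotent group is residually $p$ for every $p$ (this is a classical fact, and in our setting it also follows from Corollary~\ref{cor:res cor, 2}, since a nilpotent $G$ of the given form has unipotent $\varphi$, hence is residually $p$ for all $p$; and a finite-index nilpotent subgroup of a virtually nilpotent $G$ is itself finitely generated nilpotent). The substantive direction is (1)~$\Rightarrow$~(3). Suppose $H\leq G$ has finite index and is residually $p$ for every $p$. Again pass to $G_{k,k}\leq H$ with $k=[G:H]$; a finite-index subgroup of a group that is residually $p$ for every $p$ is again residually $p$ for every $p$ (residual $p$-ness passes to subgroups). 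So $G_{k,k}\cong\Z\ltimes_{\varphi^k}(kA)$ is residually $p$ for every $p$. By Corollary~\ref{cor:res cor, 2} applied to this mapping torus (with $\Z^d\cong kA$ and the automorphism induced by $\varphi^k$), being residually $p$ for infinitely many $p$ — in particular for all $p$ — forces $\varphi^k$ to be unipotent. Hence $\varphi$ is quasi-unipotent, proving (1)~$\Rightarrow$~(3).

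I would write this as a short cycle $(1)\Rightarrow(3)\Rightarrow(2)\Rightarrow(1)$, since (3)~$\Rightarrow$~(2) and (2)~$\Rightarrow$~(1) are quick. The one point requiring a little care — and the only place I expect any friction — is the reduction step: given a finite-index subgroup $H$ with some residual property, one must replace it by the normal subgroup $G_{k,k}$ (shown above to lie in $H$) and check that $G_{k,k}$ inherits both nilpotence and residual $p$-ness from $H$. Both inheritances are standard (subgroups of nilpotent groups are nilpotent; subgroups of residually $p$ groups are residually $p$), and the isomorphism $G_{k,k}\cong\Z\ltimes_{\varphi^k}A$ was recorded just above the statement. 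Once one is allowed to assume the finite-index subgroup is of the form $\Z\ltimes_{\varphi^k}\Z^d$, everything reduces to Corollary~\ref{cor:res cor, 2} applied to $\varphi^k$, and the proof is essentially bookkeeping.
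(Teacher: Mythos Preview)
Your proof is correct and follows essentially the same route as the paper's: both arguments reduce any finite-index subgroup $H$ to the explicit subgroup $G_{k,k}\cong \Z\ltimes_{\varphi^k} A$ (with $k=[G:H]$), and then apply Corollary~\ref{cor:res cor, 2} to $\varphi^k$. The paper organizes the logic as $(1)\Leftrightarrow(2)$ and $(2)\Leftrightarrow(3)$ rather than your cycle, and is somewhat terser about the $(1)\Rightarrow(2)$ reduction, but the content is the same. One cosmetic point: the isomorphism recorded just before the proposition is $G_{k,n}\cong\Z\ltimes_{\varphi^k}A$, so you can write $G_{k,k}\cong\Z\ltimes_{\varphi^k}A$ directly and skip the detour through $kA$ and the remark about conjugacy of the actions.
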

\begin{proof}
The equivalence of (1) and (2) follows from Corollary~\ref{cor:res cor, 2}.
If $\varphi^k$ is unipotent ($k>0$), then the finite-index normal subgroup $G_{k,1}$ of $G$ is isomorphic to $\Z\ltimes_{\varphi^k} A$ and hence nilpotent, again by Corollary~\ref{cor:res cor, 2}. This shows (3)~$\Rightarrow$~(2). For the converse, let $H$ be a finite-index nilpotent subgroup of $G$. Then $G_{k,k}\leq H$, where $k=[G:H]$, hence $G_{k,k}$ is nilpotent; since $G_{k,k}\cong \Z\ltimes_{\varphi^k} A$, we see (Corollary~\ref{cor:res cor, 2} once again) that $\varphi^k$ is unipotent.
\end{proof}

This shows part (3) of Theorem~\ref{thm:residual properties of mapping tori}.

\section{Residually $p$ Equals Weakly Residually $p$-Nilpotent}\label{section:resp}

\noindent
Let $G$ be a group and let  $H$ be a subgroup of  $G$.
The \emph{core} of $H$ in $G$ is defined as
$$H_G = \bigcap_{g\in G} g^{-1}Hg,$$
i.e.,  $H_G$ is the largest normal subgroup of $G$ contained in $H$.
Note that if $H$ is of finite index in $G$, then so is $H_G$, by the following standard group theory fact (sometimes attributed to Poincar\'e):

\begin{lemma} \label{Finite index}
Let $H$ be a subgroup of finite index in $G$. Then $H$ has only finitely many conjugates in $G$.
\end{lemma}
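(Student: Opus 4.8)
The plan is to prove the standard fact that a finite-index subgroup $H\le G$ has only finitely many conjugates, by exhibiting a bijection between the set of conjugates of $H$ and the set of left cosets of the normalizer $N_G(H)$ of $H$ in $G$. First I would observe that $g^{-1}Hg = g'^{-1}Hg'$ if and only if $g'g^{-1}$ normalizes $H$, i.e. if and only if $N_G(H)g = N_G(H)g'$. Hence the map sending the right coset $N_G(H)g$ to the conjugate $g^{-1}Hg$ is a well-defined bijection from $N_G(H)\backslash G$ onto the set of conjugates of $H$.

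It then remains to see that $N_G(H)$ has finite index in $G$. This is immediate because $H\le N_G(H)\le G$, so $[G:N_G(H)]$ divides $[G:H]$, which is finite by hypothesis. Therefore the set of conjugates of $H$ is in bijection with a finite set $N_G(H)\backslash G$, and in particular $H$ has at most $[G:H]$ conjugates in $G$.

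I do not expect any real obstacle here; the only points requiring care are the correct bookkeeping of which side the cosets sit on (the inversion $g\mapsto g^{-1}$ in the definition of conjugation swaps left and right), and recording that $\abs{N_G(H)\backslash G}=[G:N_G(H)]$ is finite. An alternative, equally short, route is to let $G$ act on the (finite) set of left cosets $G/H$ by left translation; this gives a homomorphism $G\to \mathrm{Sym}(G/H)$ whose kernel is exactly $H_G$, so $H_G$, and a fortiori every conjugate of $H$, contains a subgroup of index at most $(G:H)!$, and the conjugates of $H$ correspond to the finitely many subgroups of $G/H_G$ of the appropriate order. The coset-bijection argument is cleaner and is the one I would write up.
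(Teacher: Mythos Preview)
Your argument is correct and is essentially the same as the paper's: the paper phrases it via orbit--stabilizer (the stabilizer of $H$ under conjugation is $N_G(H)$, so the number of conjugates equals $[G:N_G(H)]$, which is finite since $H\le N_G(H)$), and your explicit coset bijection is precisely the content of that orbit--stabilizer statement. There is nothing to add.
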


\begin{proof}
The stabilizer of $H$ under the natural action of $G$ on the set of subgroups of $G$ via conjugation is the normalizer $N=N_G(H)$ of $H$ in $G$. Since $H$ has finite index in $G$, so does $N$, and the set of conjugates of $H$ in $G$ has cardinality $[G:N]$.
\end{proof}

A subgroup $H$ of $G$ is called {\it subnormal}\/  if there is a finite chain of subgroups of $G$, each one normal in the next, beginning at $H$ and ending at $G$:
\begin{equation}\label{Chain}
H = H_0 \triangleleft H_1 \triangleleft \cdots \triangleleft H_n=G.
\end{equation}
If $H$ is a subnormal subgroup of $G$ of finite index, then this finite chain can be chosen such that  each factor group $H_{i+1}/H_i$ is simple. In particular,
if $H$ is of $p$-power index, then the chain \eqref{Chain} can be chosen such that each factor $H_{i+1}/H_i$ is isomorphic to $\Z/p\Z$.

\medskip

Perron and Shalen \cite[p.~2]{PS99} say that $G$ is {\it weakly residually $p$-nilpotent} if for any $g\in G$ with $g\neq 1$ there exists a finite chain of subgroups of $G$
\[ H = H_0 \triangleleft H_1 \triangleleft \cdots \triangleleft H_n=G\]
with the following properties:
\bn
\item each group $H_i$ is normal in $H_{i+1}$,
\item for any $i$ the group $H_{i+1}/H_i$ is isomorphic to $\Z/p\Z$, and
\item $g$ is not contained in $H$.
\en
It follows from the above discussion that a group is
 weakly residually $p$-nilpotent exactly if for any $g\in G$ with $g\neq 1$ there exists a subnormal subgroup of $p$-power index in $G$ which does not contain $g$.
Clearly  residually $p$ implies weakly residually $p$-nilpotent. Somewhat less obviously, these two notions coincide:

\begin{prop}
\label{prop:weakstrong}
 If $G$ is weakly residually $p$-nilpotent, then $G$ is residually $p$.
\end{prop}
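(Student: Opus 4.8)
The plan is to show that if $G$ is weakly residually $p$-nilpotent, then for every non-trivial $g \in G$ there is a morphism onto a (finite) $p$-group that does not kill $g$. By hypothesis there is a subnormal subgroup $H \leq G$ of $p$-power index with $g \notin H$. The obstruction to concluding immediately is that $H$ need not be normal in $G$, so the quotient $G/H$ makes no sense as a group; we must replace $H$ by a normal subgroup.

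First I would pass to the core $K := H_G = \bigcap_{x \in G} x^{-1}Hx$. By Lemma~\ref{Finite index} (or its consequence noted right after it) $K$ has finite index in $G$, and clearly $K \leq H$, so $g \notin K$. It remains to check that $K$ still has $p$-power index in $G$, so that the finite group $Q := G/K$ is a $p$-group; then the quotient morphism $\alpha \colon G \to Q$ witnesses that $g$ survives in a $p$-group, and we are done. The point $g \notin K$ gives $\alpha(g) \neq 1$.

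For the index computation: since $H$ is subnormal of $p$-power index, by the discussion preceding the proposition there is a chain $H = H_0 \triangleleft H_1 \triangleleft \cdots \triangleleft H_n = G$ with each $H_{i+1}/H_i \cong \Z/p\Z$; in particular each $H_i$ is itself subnormal of $p$-power index in $G$. Now I would show more generally that the \emph{intersection of finitely many subnormal subgroups of $p$-power index is again subnormal of $p$-power index}, and then observe that $K = H_G$ is the intersection of the finitely many conjugates $x^{-1}Hx$ of $H$ (finitely many by Lemma~\ref{Finite index}), each of which is subnormal of $p$-power index since conjugation is an automorphism of $G$. The two-subgroup case of this closure property is the heart of the matter: if $A, B$ are subnormal with $A$ of $p$-power index, realized by a chain $A = A_0 \triangleleft \cdots \triangleleft A_m = G$ with $A_{i+1}/A_i \cong \Z/p\Z$, then intersecting the chain with $B$ gives $A \cap B = A_0 \cap B \triangleleft A_1 \cap B \triangleleft \cdots \triangleleft A_m \cap B = B$, and each $(A_{i+1} \cap B)/(A_i \cap B)$ embeds into $A_{i+1}/A_i \cong \Z/p\Z$, hence is trivial or $\Z/p\Z$; thus $A \cap B$ is subnormal of $p$-power index \emph{in} $B$. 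Splicing this below a subnormal chain from $B$ up to $G$ (which exists since $B$ is subnormal, and whose factors can be taken simple, hence of order divisible only by primes dividing $[G:B]$; when $[G:B]$ is a $p$-power these factors are $\Z/p\Z$) shows $A \cap B$ is subnormal of $p$-power index in $G$. Iterating over the finitely many conjugates of $H$ yields the claim for $K$.

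The main obstacle is precisely this stability of ``subnormal of $p$-power index'' under finite intersections — one has to be careful that subnormality is a genuinely more subtle condition than normality (it is not transitive-friendly in the naive sense, but it \emph{is} preserved under intersection with a fixed subgroup via the chain-intersection argument above), and that the $p$-power index is controlled at each step by the embedding of successive quotients. Once that lemma is in hand, the proposition follows by taking $\alpha \colon G \twoheadrightarrow G/H_G$ for the given $g$.
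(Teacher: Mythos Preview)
Your proof is correct and follows essentially the same route as the paper: pass to the core $H_G$, use the chain-intersection argument (Second Isomorphism Theorem) to control the index of an intersection with a subnormal subgroup, and iterate over the finitely many conjugates of $H$. The paper packages the key step as the divisibility statement $[G:H_G]\mid [G:H]^{[G:N_G(H)]}$ (its Lemma~\ref{Key Lemma}), while you phrase it as closure of ``subnormal of $p$-power index'' under finite intersections, but the underlying mechanism is identical.
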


This proposition may be seen as a consequence of the well-known fact that being a $p$-group is a root property
(see \cite[p.~33]{Gru57}). For the reader's convenience we also give a quick proof of the proposition.
This follows immediately from the following lemma (which must surely be well-known):

\begin{lemma} \label{Key Lemma}
Suppose $H$ is a subnormal subgroup of $G$. Then
$[G:H_G]$ divides
$[G:H]^{[G:N_G(H)]}$. In particular, if $[G:H]$ is a power of $p$, then so is  $[G:H_G]$.
\end{lemma}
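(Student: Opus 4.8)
The plan is to prove the divisibility statement by induction on the length of a subnormal chain from $H$ to $G$, reducing the general case to the single-step case where $H$ is normal in $G$ (in which case $H_G=H$ and the claim is trivial) together with a bound on $[G:H_G]$ in terms of the index and the number of conjugates. First I would invoke Lemma~\ref{Finite index}: $H$ has exactly $[G:N_G(H)]$ conjugates $g_1^{-1}Hg_1,\dots,g_m^{-1}Hg_m$ in $G$, where $m=[G:N_G(H)]$, and $H_G$ is their intersection. The elementary inequality I want is that the intersection of $m$ subgroups each of index $n$ has index dividing $n^m$; applied to the conjugates (each of which has the same index $[G:H]$ as $H$), this gives $[G:H_G]\mid [G:H]^{m}$, which is exactly the asserted bound, and the ``in particular'' clause follows since a divisor of a power of $p$ is a power of $p$.

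The key steps, in order: (i) record that $H_G=\bigcap_{i=1}^m g_i^{-1}Hg_i$ with $m=[G:N_G(H)]<\infty$ by Lemma~\ref{Finite index} (finiteness of $[G:H]$, hence of $m$, comes from subnormality plus finite index of $H$ — or one simply assumes $[G:H]$ finite, which is the only case used); (ii) prove the combinatorial lemma that for subgroups $K_1,\dots,K_m\le G$ the natural map $G/\bigcap_i K_i \to \prod_i G/K_i$ of coset spaces is injective, so $[G:\bigcap_i K_i]$ divides $\prod_i [G:K_i]$ — this is the standard ``Poincaré'' argument again, done by induction on $m$ using $[G:K_1\cap K_2]=[G:K_1][K_1:K_1\cap K_2]$ and $[K_1:K_1\cap K_2]\le [G:K_2]$ with the latter index dividing $[G:K_2]$; (iii) substitute $K_i=g_i^{-1}Hg_i$, each of index $[G:H]$, to conclude $[G:H_G]\mid [G:H]^m$.

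I do not expect a serious obstacle here — the only subtlety is purely bookkeeping: making sure the divisibility (not merely an inequality) in step (ii) is stated correctly, i.e.\ that $[K_1:K_1\cap K_2]$ divides $[G:K_2]$, which holds because the $K_1$-set $K_1/(K_1\cap K_2)$ embeds $K_1$-equivariantly into the $G$-set $G/K_2$, so its size divides $[G:K_2]$ when the latter is finite. Once that is in place the induction is immediate. Note that subnormality of $H$ is not actually needed for the displayed divisibility — it holds for any finite-index subgroup — but it is the hypothesis under which the lemma is applied in Proposition~\ref{prop:weakstrong}, so I would keep it in the statement and simply remark that finiteness of $[G:H]$ is all that is used.
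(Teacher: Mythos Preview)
Your argument has a genuine gap in step~(ii). The claim that $[K_1:K_1\cap K_2]$ divides $[G:K_2]$ for arbitrary finite-index subgroups $K_1,K_2$ of $G$ is false. The $K_1$-equivariant injection $K_1/(K_1\cap K_2)\hookrightarrow G/K_2$ only gives the inequality $[K_1:K_1\cap K_2]\leq [G:K_2]$; being a single $K_1$-orbit inside $G/K_2$ says nothing about divisibility of $[G:K_2]$. Concretely, take $G=S_3$ and $K_1=\langle (12)\rangle$, $K_2=\langle (13)\rangle$: then $K_1\cap K_2=1$, so $[K_1:K_1\cap K_2]=2$ while $[G:K_2]=3$. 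In fact the lemma itself fails without subnormality: with $G=S_3$ and $H=\langle(12)\rangle$ one has $H_G=1$, $[G:H]=3$, $[G:N_G(H)]=3$, and $[G:H_G]=6$ does not divide $3^3=27$. So your closing remark that ``subnormality of $H$ is not actually needed'' is incorrect, and the paper explicitly flags this: for arbitrary subgroups one only gets $[G:H\cap K]\leq [G:H][G:K]$.

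The paper repairs exactly this step by using the subnormal chain. One shows that if $H$ is subnormal of finite index and $K$ is any subgroup, then $[K:H\cap K]$ divides $[G:H]$: intersect a chain $H=H_0\triangleleft\cdots\triangleleft H_n=G$ with $K$ and, using normality of $H_{i-1}$ in $H_i$, observe that $[H_i\cap K:H_{i-1}\cap K]=[H_{i-1}(H_i\cap K):H_{i-1}]$ divides $[H_i:H_{i-1}]$. Multiplying over $i$ gives the divisibility. An induction then yields that for subnormal finite-index subgroups $G_1,\dots,G_m$ the index $[G:G_1\cap\cdots\cap G_m]$ divides $\prod_i[G:G_i]$. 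Since conjugates of a subnormal subgroup are again subnormal with the same index, applying this to the $m=[G:N_G(H)]$ conjugates of $H$ gives the lemma. Your overall architecture (intersect the conjugates, count them via Lemma~\ref{Finite index}) is right; what is missing is precisely the use of subnormality to upgrade the index inequality to a divisibility.
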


To prove Lemma~\ref{Key Lemma} we need an auxiliary observation.

\begin{lemma}
Let $H$ be a subnormal subgroup of finite index in $G$, and let $K$ be a subgroup of $G$. Then $[K : H \cap K]$ divides $[G : H]$, and hence $[G : H \cap K]$ divides $[G : H][G : K]$.
\end{lemma}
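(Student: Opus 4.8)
The second assertion is a formal consequence of the first: by multiplicativity of the index, $[G:H\cap K]=[G:K]\cdot[K:H\cap K]$, so once we know $[K:H\cap K]$ divides $[G:H]$ it follows at once that $[G:H\cap K]$ divides $[G:H]\,[G:K]$. Thus the plan is to concentrate on the first claim, that $[K:H\cap K]\mid[G:H]$. Note that subnormality is genuinely needed here: for a merely finite-index subgroup $H$ one only gets $[K:H\cap K]\le[G:H]$, not divisibility.

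First I would dispose of the \emph{normal} case: if $H\triangleleft G$, then $HK$ is a subgroup of $G$ containing $H$, and the map $k\mapsto kH\colon K\to HK/H$ is a surjective group morphism with kernel $H\cap K$; hence $[K:H\cap K]=[HK:H]$, which divides $[G:H]$ by Lagrange. This is really the heart of the matter, since it is exactly the passage to a genuine subgroup of the quotient $G/H$ that upgrades ``$\le$'' to ``$\mid$''.

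Next I would run an induction on the length $n$ of a subnormal series $H=H_0\triangleleft H_1\triangleleft\cdots\triangleleft H_n=G$, the cases $n=0,1$ being trivial resp.\ the normal case just handled. For $n\ge 2$, set $L:=H_{n-1}$, so that $L\triangleleft G$ (with $L$ of finite index, hence all intermediate groups of finite index), while $H$ is subnormal in $L$ via a series of length $n-1$. Applying the normal case to $L\triangleleft G$ and the subgroup $K$ gives $[K:L\cap K]\mid[G:L]$; applying the induction hypothesis to $H$ subnormal in $L$ and the subgroup $L\cap K$ of $L$, together with the identity $H\cap(L\cap K)=H\cap K$ (valid since $H\subseteq L$), gives $[L\cap K:H\cap K]\mid[L:H]$. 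Multiplying, $[K:H\cap K]=[K:L\cap K]\cdot[L\cap K:H\cap K]$ divides $[G:L]\cdot[L:H]=[G:H]$, completing the induction.

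The only point requiring care is the one already flagged — that each step must preserve divisibility, not merely the inequality of indices — which is why the induction is set up so that every step reduces to the normal case; the remaining manipulations (multiplicativity of index, the intersection identity, finiteness bookkeeping) are routine.
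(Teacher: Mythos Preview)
Your proof is correct and follows essentially the same approach as the paper's. The paper intersects the entire subnormal chain $H=H_0\triangleleft\cdots\triangleleft H_n=G$ with $K$ and applies the Second Isomorphism Theorem at each step to obtain $[H_i\cap K:H_{i-1}\cap K]\mid[H_i:H_{i-1}]$, then multiplies; you package the identical argument as an induction on $n$, peeling off the top normal step and invoking your ``normal case'' (which is precisely that Second Isomorphism Theorem computation).
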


 Note that for arbitrary  subgroups $H$ and $K$ of $G$, in general one only can conclude $[G:H\cap K]\leq [G:H][G:K]$.

\begin{proof}
Consider a chain \eqref{Chain} as above. Intersecting with $K$ yields a chain
$$H \cap K = H_0 \cap K \leq H_1\cap K \leq \cdots \leq H_n\cap K = K.$$
For $i=1,\dots,n$, by the
Second Isomorphism Theorem applied to $H_i$ we have
$$[H_i \cap K : H_{i-1} \cap K] = [H_i \cap K : H_{i-1}\cap (H_i \cap K)] = [H_{i-1}(H_i \cap K):H_{i-1}],$$
and the order of its subgroup $H_{i-1}(H_i \cap K)/H_{i-1}$ divides the order of the group $H_i/H_{i-1}$,
hence $[H_i \cap K : H_{i-1} \cap K]$ divides $[H_i:H_{i-1}]$. Thus
$[K:H\cap K]=\prod_{i=1}^n [H_i\cap K:H_{i-1}\cap K]$
divides $[G:H]=\prod_{i=1}^n [H_i:H_{i-1}]$.
The second statement now follows from  $[G : H \cap K] = [G : K][K : H \cap K]$.
\end{proof}

By induction on $m$, the lemma yields:

\begin{cor}
If $G_1,\dots,G_m$ are subnormal subgroups of finite index in $G$, then
$[G:G_1\cap\cdots \cap G_m]$ divides $[G:G_1]\cdots [G:G_m]$.
\end{cor}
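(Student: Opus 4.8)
The statement to prove is the corollary: if $G_1,\dots,G_m$ are subnormal subgroups of finite index in $G$, then $[G:G_1\cap\cdots\cap G_m]$ divides $[G:G_1]\cdots[G:G_m]$. The plan is a straightforward induction on $m$, using the preceding lemma as the induction step. The base case $m=1$ is trivial, and $m=2$ is exactly the second assertion of the preceding lemma (taking $H=G_1$, $K=G_2$).

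For the inductive step, suppose the claim holds for $m-1$ subnormal finite-index subgroups, and let $G_1,\dots,G_m$ be subnormal of finite index in $G$. Set $K=G_2\cap\cdots\cap G_m$. The first point to verify is that $K$ is again subnormal of finite index in $G$: finite index is clear from the inductive hypothesis (or just from $[G:K]\le\prod_{i\ge 2}[G:G_i]<\infty$), and subnormality of a finite intersection of subnormal subgroups is a standard fact — one can see it by intersecting a subnormal chain for $G_2$ with $G_3\cap\cdots\cap G_m$ repeatedly, exactly as in the proof of the preceding lemma, or cite that subnormal subgroups of a group are closed under finite intersection. Then apply the preceding lemma with $H=G_1$ (subnormal of finite index) and this $K$: it gives that $[G:G_1\cap K]$ divides $[G:G_1]\,[G:K]$. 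By the inductive hypothesis, $[G:K]=[G:G_2\cap\cdots\cap G_m]$ divides $[G:G_2]\cdots[G:G_m]$. Combining, $[G:G_1\cap\cdots\cap G_m]=[G:G_1\cap K]$ divides $[G:G_1][G:G_2]\cdots[G:G_m]$, as required.

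The only mild subtlety — the "main obstacle," such as it is — is that the preceding lemma as stated requires \emph{one} of the two subgroups ($H$) to be subnormal of finite index and places no hypothesis on $K$, so to iterate it I need to know that the intermediate intersection $K=G_2\cap\cdots\cap G_m$ is itself subnormal (of finite index) in $G$, which is precisely what licenses using it in the role of either argument of the lemma at the next stage. This is why the induction must carry along subnormality of the partial intersections, not merely the divisibility of indices. Once that bookkeeping is in place the argument is purely formal.

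I would present this in two or three sentences in the paper, since all the real content is in the two lemmas already proved; the corollary is just their packaging.
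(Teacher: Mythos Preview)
Your proof is correct and follows the same inductive route as the paper (which simply says ``by induction on $m$, the lemma yields'' the corollary). However, the paragraph you flag as the ``main obstacle'' is a non-issue: with your choice $H=G_1$ and $K=G_2\cap\cdots\cap G_m$, the lemma only asks that $H$ be subnormal of finite index, which holds by hypothesis, and the inductive hypothesis applies directly to $G_2,\dots,G_m$, which are themselves subnormal of finite index by the original assumption. At no point do you need to know that the partial intersection $K$ is subnormal, so the bookkeeping you describe (carrying subnormality of partial intersections through the induction) can be dropped entirely.
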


The corollary above and (the proof of) Lemma~\ref{Finite index} immediately imply Lemma~\ref{Key Lemma}.

\section{The Proof of Theorem~\ref{thm:ps}} \label{section:proofps}

\subsection{Discussion of the proof of Theorem~\ref{thm:ps}}

Let $M$ and $N$ be 
closed irreducible, orientable graph manifolds with infinite fundamental group
and let $f\colon M\to N$ be a covering homology equivalence.
If $M$ is a Seifert fibered space or a torus bundle, then  Theorem~\ref{thm:ps} is shown to hold in  \cite[Sections~5.1~and~5.2]{PS99}.
In these two cases no results on residual properties of fundamental groups are used.
Note that the case that $M$ is a torus bundle also follows immediately from Theorem~\ref{thm:ps2} since the fundamental groups of torus bundles are solvable.

The proof of Theorem \ref{thm:ps} for the remaining cases
builds on the following proposition:

\begin{proposition} \label{prop:tinj} \cite[Lemma~4.1.1]{PS99}
Let $f\colon M\to N$ be a covering homology equivalence between closed irreducible orientable graph manifolds with infinite fundamental group.
Then given any torus $T$ of the JSJ decomposition of $M$ the map $f_*\colon\pi_1(T)\to \pi_1(N)$ is injective.
\end{proposition}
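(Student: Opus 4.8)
The plan is to reduce the statement to the injectivity of a homomorphism of free abelian groups, and then to derive a contradiction from the homology equivalences that $f$ induces on an exhausting tower of finite covers.

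First, I would note that a covering homology equivalence is in particular a $\Z$-homology equivalence, so $f_*\colon H_3(M;\Z)\to H_3(N;\Z)$ is an isomorphism and $f$ has degree $\pm1$; a degree-one map of closed oriented manifolds is surjective on fundamental groups, hence $f_*\colon\pi_1(M)\to\pi_1(N)$ is onto. Since $N$ is closed, irreducible, and has infinite fundamental group it is aspherical, so $\pi_1(N)$ is torsion-free; and since $T$ is a JSJ torus of the graph manifold $M$ it is incompressible, so $\pi_1(T)\cong\Z^2$ injects into $\pi_1(M)$. Thus $H:=f_*(\pi_1(T))$ is a torsion-free quotient of $\Z^2$, i.e. $H\cong\Z^r$ with $r\in\{0,1,2\}$, and the assertion is that $r=2$. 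Assume $r\le1$. Then the kernel of $\Z^2\cong\pi_1(T)\twoheadrightarrow H$ is a nonzero direct summand, so it contains a primitive element $a$; in particular $1\neq a\in\ker f_*$.

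Now pass to covers. For a finite-index subgroup $Q\le\pi_1(N)$ set $G_Q:=f_*^{-1}(Q)$, a subgroup of index $[\pi_1(N):Q]$ in $\pi_1(M)$; the corresponding covers $M_Q\to M$, $N_Q\to N$ carry a lift $\tilde f_Q\colon M_Q\to N_Q$ of $f$ (as $f_*(G_Q)=Q$), which is again a covering homology equivalence, since every finite cover of $N_Q$ is a finite cover of $N$. In particular $\tilde f_Q$ is a $\Z$-homology equivalence and $b_1(M_Q)=b_1(N_Q)$. The crucial point is that $a\in G_Q$ for every $Q$, because $f_*(a)=1\in Q$; in fact $a$ lies in every conjugate of $G_Q$, because it lies in $\ker f_*$. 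Hence for every $Q$ and every component $\tilde T$ of the preimage of $T$ in $M_Q$, the image of $\pi_1(\tilde T)$ in $\pi_1(T)\cong\Z^2$ is a finite-index subgroup containing $a$, so $T$ never unwraps in the $a$-direction; meanwhile $\tilde f_{Q*}(\pi_1(\tilde T))\subseteq H$ has rank $\le1$, so $\tilde f_Q|_{\tilde T}$ factors up to homotopy through the cover of $N_Q$ corresponding to that image, which, being aspherical with free fundamental group of rank $\le1$, is homotopy equivalent to a point or a circle. Therefore $\tilde f_{Q*}[\tilde T]=0$ in $H_2(N_Q;\Z)$, whence $[\tilde T]=0$ in $H_2(M_Q;\Z)$ by the homology equivalence. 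We conclude that in every finite cover of $M$ of the form $M_Q$ --- equivalently, every finite cover whose fundamental group contains $\ker f_*$ --- every component of the preimage of $T$ is null-homologous over $\Z$, hence separating.

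The contradiction --- and the technical heart of the argument --- is to exhibit a finite cover $N'\to N$ such that, in the pullback $M':=M\times_N N'$ (which has $\pi_1(M')=f_*^{-1}(\pi_1(N'))\supseteq\ker f_*$, so is of the form $M_Q$), some component of the preimage of $T$ is \emph{non}-separating, i.e. has nonzero class in $H_2(M';\Q)$, i.e. is dual to a surjection $\pi_1(M')\to\Z$. This is where the hypothesis that $M$ is a graph manifold is used decisively: $T$ is a canonical JSJ torus, so it lifts to a JSJ torus of $M'$, and one analyzes the graph-of-groups decomposition of $M$ --- using that the persistence of $a$ keeps the edge corresponding to $T$ essential in these covers --- and chooses the finite quotient of $\pi_1(N)$ (here residual finiteness of $\pi_1(N)$, valid for all graph manifolds, is the relevant input; Theorem~\ref{thm:gmresp} would give a prime-power refinement if one wanted it) so that $M'$ acquires a first cohomology class detecting a component of the preimage of $T$. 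Making this last step precise is the main obstacle; the reductions above are formal.
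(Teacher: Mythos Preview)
Your reductions in the first four paragraphs are correct and match the paper's opening moves: $f$ has degree one hence $f_*$ is onto, $\pi_1(N)$ is torsion-free so the image of $\pi_1(T)$ has rank $0$, $1$, or $2$, and if the rank is at most $1$ then every lift of $T$ to every cover $M_Q$ pulled back from $N$ is null-homologous, hence separating. This is essentially Lemma~\ref{lem:choicea},~(1) applied in all covers. But the step you defer --- producing a cover $M'$ of the form $M_Q$ in which some lift of $T$ is \emph{non}-separating --- is not a technicality to be filled in later; it is the entire proof, and your sketch of it is both incomplete and points to the wrong input. Residual finiteness of $\pi_1(N)$ alone is not what is used, and it is unclear that it would suffice: the paper needs the stronger fact (Theorem~\ref{thm:gmresp}) that $\pi_1(N)$ is virtually residually $p$ for every prime $p$, precisely in order to control \emph{divisibility} of the order of the image of a chosen element under finite quotients (Lemma~\ref{lem:order}). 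The remark that a lift of a JSJ torus is again a JSJ torus is also not generally true (Seifert pieces can merge in covers), though the argument only needs incompressibility.

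The paper's route to the contradiction is more structured than ``find a non-separating lift''. Knowing that all lifts separate, one shows (Lemma~\ref{lem:choicea},~(2)) that one of the two sides $A$ of $M$ cut along $T$ has the property that in \emph{every} pulled-back cover the lift of $A$ has a single boundary torus, and then (Lemma~\ref{lem:choicea},~(3)) that $H_1(A^\alpha;\Z)=\Z$ for every such lift $A^\alpha$; the latter is proved by a Mayer-Vietoris argument together with a further passage to a $p$-elementary cover. This forces $H_1(A;\Z)=\Z$, and now the graph-manifold hypothesis on $M$ enters: the Alexander polynomial of $A$ with respect to $\pi_1(A)\to\Z$ is nontrivial and has only roots of unity as zeros (Eisenbud--Neumann), so some cyclic cover $A_m$ has $b_1(A_m)>1$ (Lemma~\ref{lem:posb1}). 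Finally, one uses Theorem~\ref{thm:gmresp} via Lemma~\ref{lem:order} to find a finite quotient $\alpha\colon\pi_1(N)\to G$ in which the image of a generator of $f_*(\pi_1(T))\cong\Z$ has order divisible by $m$; the choice of $A$ then forces $A^\alpha$ to factor through $A_m$, so $b_1(A^\alpha)>1$, contradicting Lemma~\ref{lem:choicea},~(3). Your proposal has not supplied any of the ingredients after the separating step.
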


\begin{remarks}
\mbox{}

\bn
\item
The proof provided in \cite{PS99} works under the  assumption that fundamental groups of graph manifolds are virtually residually $p$ for every $p$,
i.e., that there exists a finite index subgroup which is residually $p$ for every $p$.
In the following sections we provide a proof  of Proposition~\ref{prop:tinj} which is based on the  ideas of \cite[Section~4.1]{PS99}, but various changes to that treatment are in order to accommodate the
weaker information on virtual properties of graph manifold groups.
\item Note that in contrast to \cite{PS99} we do not exclude the case that $N$ is covered by a torus bundle.
\en
\end{remarks}

Now suppose that $M$ is neither a Seifert fibered space nor a torus bundle.
If $N$ is not finitely covered by a torus bundle, then the remainder of the proof of Theorem~\ref{thm:ps} provided in \cite[Sections~4.2~and~4.3]{PS99} carries over without any changes since it does not make use of any residual properties of graph manifold groups.
If $N$ is finitely covered by a torus bundle, then Theorem~\ref{thm:ps} is an immediate consequence of \cite[5.3.1~and~5.3.2]{PS99}.


\subsection{Proof of Proposition \ref{prop:tinj}, part I}

In this subsection,
we let $M$ and $N$ be 
closed irreducible orientable $3$-manifolds, and we let $f\colon M\to N$ be a covering homology equivalence. We also let $T$ be an incompressible torus in $M$ with the property that  $\im\{f_*\colon\pi_1(T)\to \pi_1(N)\}\cong \Z$.
The proof of the following lemma is partly based on ideas of  \cite[(4.1.3) and (4.1.4)]{PS99}.

\begin{lemma}\label{lem:choicea}
The following hold:
\bn
\item The torus $T$ is  separating in $M$.
\item There exists a component $A$ of $M$ cut along $T$ with the following property:
for any epimorphism $\a\colon\pi_1(N)\to G$ onto a finite group $G$ we have
\[ \im\left\{\pi_1(T) \to \pi_1(A)\xrightarrow{f_*}\pi_1(N)\xrightarrow{\a} G\right\}= \im\left\{\pi_1(A)\xrightarrow{f_*}\pi_1(N)\xrightarrow{\a} G\right\}.\]
\item For a component $A$ of $M$ as in \textup{(2)}  and every epimorphism $\a\colon\pi_1(N)\to G$ onto a finite group $G$
we  have  \[ H_1\left(\ker\left\{\pi_1(A)\xrightarrow{f_*}\pi_1(N)\xrightarrow{\a} G\right\};\Z\right)=\Z.\]
\en
\end{lemma}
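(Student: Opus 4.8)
\textbf{Proof plan for Lemma~\ref{lem:choicea}.}

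The plan is to exploit systematically the hypothesis that $f$ is a covering homology equivalence, using the fact that $\im\{f_*\colon\pi_1(T)\to\pi_1(N)\}\cong\Z$ together with a Mayer--Vietoris argument on finite covers. First I would prove (1). Since $T$ is incompressible in the irreducible manifold $M$, $[T]$ is a primitive class in $H_2(M;\Z_2)$ if and only if $T$ is non-separating; so it suffices to show $[T]=0$ in $H_2(M;\Z_2)$, or equivalently (by Poincar\'e duality) that the dual class in $H^1(M;\Z_2)\cong\Hom(\pi_1(M),\Z_2)$ vanishes. Suppose not: then there is a surjection $\pi_1(M)\to\Z_2$ detecting $T$, giving a connected double cover $\widehat M\to M$ in which $T$ lifts to two copies; pushing this cover down to $N$ is the subtle point, so instead I would argue on the $N$ side. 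Because $f_*(\pi_1(T))\cong\Z$ is cyclic, any finite cover of $N$ restricts over $f(T)$ in a controlled way; the key observation from \cite[(4.1.3)]{PS99} is that a covering homology equivalence forces $f_*[T]$ to be trivial in $H_1$ of every finite cover of $N$ with $\Z_2$ coefficients, and pulling this back through $f$ (using that $f$ is a $\Z_2$-homology equivalence on the relevant cover) shows $[T]=0\in H_2(M;\Z_2)$, hence $T$ separates.

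Next, for (2), write $M$ cut along $T$ as $A\cup_{T} B$ (two pieces by part (1); if $T$ bounds on one side only in the cut manifold one adjusts notation, but separating gives exactly two components, possibly with $A=B$ identified along two boundary tori only if $T$ were non-separating, which (1) excludes). For a finite group $G$ and epimorphism $\a\colon\pi_1(N)\to G$, set $\Gamma_A=\im\{\pi_1(A)\xrightarrow{f_*}\pi_1(N)\xrightarrow{\a}G\}$ and similarly $\Gamma_B$, and let $\Gamma_T=\im\{\pi_1(T)\to G\}$, a cyclic subgroup contained in $\Gamma_A\cap\Gamma_B$. By van Kampen, $G=\a(\pi_1(N))=\a(f_*\pi_1(M))$ is generated by $\Gamma_A$ and $\Gamma_B$. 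Now consider the cover $\widetilde N\to N$ corresponding to $\ker\a$: its preimage $\widetilde M\to M$ decomposes along the preimage of $T$, and computing $H_1(\widetilde M;\Z)$ by Mayer--Vietoris in terms of the covers of $A$, $B$, $T$ and comparing with $H_1(\widetilde N;\Z)$ via the homology equivalence $\widetilde f$ forces a rank/index constraint. The point is that the Euler characteristic / rank bookkeeping can only balance if the cyclic group $\Gamma_T$ already surjects onto one of the two factors; this is exactly where the $\Z$-homology equivalence on \emph{every} finite cover is used, and one concludes that for every $\a$ either $\Gamma_T=\Gamma_A$ or $\Gamma_T=\Gamma_B$. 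The remaining issue is that the good side might a priori depend on $\a$; to get a single component $A$ working for all $\a$, I would run the argument over the (cofinal) tower of covers and use that $\pi_1(N)$ is residually finite (it is a graph manifold group) so that a compatible choice stabilizes — or, more cleanly, observe that if neither side worked uniformly one could combine two epimorphisms via the corollary to Lemma~\ref{Key Lemma} (subnormal intersections of $p$-power index) to produce a cover violating the constraint. This gives the component $A$ as required.

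Finally, (3) is a direct consequence of (2). Fix $A$ as in (2) and $\a\colon\pi_1(N)\to G$ onto a finite group $G$; write $K=\ker\{\pi_1(A)\xrightarrow{f_*}\pi_1(N)\xrightarrow{\a}G\}$ and let $A'\to A$ be the corresponding finite cover, a compact $3$-manifold with $\pi_1(A')=K$. By (2), the boundary torus components of $A'$ lying over $T$ are permuted transitively by $G/\Gamma_T$-nonsense — more precisely, the condition in (2) says the cover $A'\to A$ restricted to $T$ is connected, i.e.\ $A'$ has a \emph{single} boundary component over $T$. Now $f$ restricts to a map $A'\to\widetilde N_A$ into the corresponding cover of a neighbourhood in $N$, which by the covering homology equivalence hypothesis is a $\Z$-homology equivalence; but $\widetilde N_A$ deformation retracts onto (a cover of) $f(T)$, which has the homology of a circle since $f_*\pi_1(T)\cong\Z$, so $H_1(A';\Z)=H_1(K;\Z)\cong\Z$. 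I expect the main obstacle to be part (2): isolating a \emph{single} component $A$ that works simultaneously for all finite quotients, rather than one depending on $\a$, and making the Mayer--Vietoris/rank bookkeeping on the tower of finite covers precise enough to force the cyclic image $\Gamma_T$ to absorb one whole side. The homological input (covering homology equivalence) must be leveraged at the level of arbitrarily deep covers, and keeping track of how the separating torus $T$ lifts — connected vs.\ split — in each is the technical heart of the argument.
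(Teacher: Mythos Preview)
Your plan has the right overall architecture, but there are real gaps in parts (2) and (3), and part (1) is made harder than necessary.

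\textbf{Part (1).} The paper's argument is a one-liner you are missing: since $f_*(\pi_1(T))\cong\Z$, the map $T\to N$ factors up to homotopy through a circle, so $f_*[T]=0$ in $H_2(N;\Z)$; as $f_*\colon H_2(M;\Z)\to H_2(N;\Z)$ is an isomorphism, $[T]=0$ in $H_2(M;\Z)$ and $T$ separates. No $\Z_2$-coefficients, no duality, no passage to covers.

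\textbf{Part (2).} Your uniformization step (combine two bad epimorphisms into a product to get a contradiction) is exactly what the paper does. The gap is in the per-$\alpha$ statement: ``Euler characteristic/rank bookkeeping'' is not an argument, and there is no Mayer--Vietoris decomposition on the $N$ side to compare with. The paper's mechanism is combinatorial, not homological. In the regular cover $p\colon M^\alpha\to M$, every component $S$ of $p^{-1}(T)$ is separating (apply the part (1) argument to $M^\alpha\to N^\alpha$). Choose $S$ so that one complementary piece $C(S)$ contains the fewest components of $p^{-1}(T)$; a minimality argument forces this number to be $1$, so $C(S)$ is a component of some $p^{-1}(A_i)$ with exactly one boundary torus. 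By regularity, \emph{every} component of $p^{-1}(A_i)$ then has a single boundary component, hence $b_0(p^{-1}(A_i))=b_0(p^{-1}(T))$, i.e.\ $|G|/|\Gamma_{A_i}|=|G|/|\Gamma_T|$, giving $\Gamma_T=\Gamma_{A_i}$. This is the missing idea.

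\textbf{Part (3).} Here your argument is actually wrong, not just sketchy. You assert that $f|_{A'}$ is a $\Z$-homology equivalence onto some ``$\widetilde N_A$'' which deformation retracts to a circle. But the covering homology equivalence hypothesis concerns only closed covers $\widetilde M\to\widetilde N$; it says nothing about restrictions to submanifolds, and there is no piece of $N$ naturally corresponding to $A$ (the map $f$ need not respect any decomposition of $N$). So this step does not go through.

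The paper instead proves (3) by contradiction, \emph{feeding back into} (2). Suppose $H_1(A^\alpha;\Z)\neq\Z$. Since $\partial A^\alpha=T^\alpha$ is a single torus, half-lives--half-dies gives $\dim\im\{H_1(T^\alpha;\F_p)\to H_1(A^\alpha;\F_p)\}=1$ for every $p$; pick $p$ with $\dim_{\F_p}H_1(A^\alpha;\F_p)\geq 2$ and use Mayer--Vietoris on $M^\alpha=A^\alpha\cup_{T^\alpha}B^\alpha$ to produce classes in $H_1(M^\alpha;\F_p)$ coming from $A^\alpha$ but not from $T^\alpha$. Now enlarge the quotient: set
\[
\beta\colon \pi_1(N)\longrightarrow K:=\pi_1(N)\big/[\ker\alpha,\ker\alpha]\,\ker(\alpha)^p,
\]
a finite group containing $H_1(N^\alpha;\F_p)$. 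Since $f_*\colon H_1(M^\alpha;\F_p)\to H_1(N^\alpha;\F_p)$ is an isomorphism, the extra classes survive, and one checks that $\pi_1(T)$ and $\pi_1(A)$ have different images under $\beta\circ f_*$, contradicting the defining property of $A$ in (2). Thus (3) is not a direct homology computation on $A'$; it is obtained by manufacturing a deeper finite quotient that violates (2).
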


The proof of this lemma will require the remainder of this section.

\medskip

We first prove (1). Since $ \im\{f_*\colon\pi_1(T)\to \pi_1(N)\}\cong \Z$ it follows that the map $T\to N$ factors up to homotopy through
a circle. In particular $f_*\colon H_2(T;\Z)\to H_2(N;\Z)$ is the trivial map.
On the other hand $f_*\colon H_2(T;\Z)\to H_2(N;\Z)$ factors as
$H_2(T;\Z)\to H_2(M;\Z)\xrightarrow{f_*} H_2(N;\Z)$ and the latter map is by assumption an isomorphism.
It follows that $T$ represents the trivial element in $H_2(M;\Z)$, i.e., $T$ is separating.

\medskip

We now turn to the proof of part (2).
We  denote the components of $M$ cut along $T$ by $A_1$ and $A_2$.

\begin{claim}
Let $\a\colon\pi_1(N)\to G$ be an epimorphism onto a finite group $G$. Then there exists an $i\in \{1,2\}$
such that
\[ \im\left\{\pi_1(T) \to \pi_1(A_i)\xrightarrow{f_*}\pi_1(N)\to G\right\}= \im\left\{\pi_1(A_i)\xrightarrow{f_*}\pi_1(N)\to G\right\}.\]
\end{claim}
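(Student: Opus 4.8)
The plan is to pull back, via $f$, the covering of $N$ determined by $\alpha$ to a finite regular covering of $M$, and to read off the claimed equality of images from the combinatorics of that covering. Write $q\colon\widehat{N}\to N$ for the regular covering with deck transformation group $G$ corresponding to $\ker(\alpha)$, put $\gamma:=\alpha\circ f_*\colon\pi_1(M)\to G$, and let $p\colon\widetilde{M}:=f^*\widehat{N}\to M$ be the pullback. Since $f$ is in particular a $\Z$-homology equivalence it has degree $\pm1$, so $f_*\colon\pi_1(M)\to\pi_1(N)$ is surjective and hence so is $\gamma$; therefore $\widetilde{M}$ is connected, $p$ is the regular $G$-covering of $M$ associated to $\ker(\gamma)$, and $\widetilde{f}\colon\widetilde{M}\to\widehat{N}$ is a $\Z$-homology equivalence. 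Set $G_i:=\gamma(\pi_1(A_i))$ and $C:=\gamma(\pi_1(T))$; then $C\le G_1\cap G_2$, and the Claim says precisely that $C=G_i$ for some $i$. Since $T$ separates $M$ by part~(1), $\widetilde{M}$ is obtained by gluing the $[G:G_1]$ components of $p^{-1}(A_1)$, each a connected $G_1$-covering of $A_1$, and the $[G:G_2]$ components of $p^{-1}(A_2)$, each a connected $G_2$-covering of $A_2$, along the $[G:C]$ components of $p^{-1}(T)$, each a connected $C$-covering $\widehat{T}$ of $T$. The dual graph $\Gamma$ of this decomposition is thus bipartite and connected, with $[G:G_1]+[G:G_2]$ vertices and $[G:C]$ edges.

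The key step is to show that $\Gamma$ is a tree, and for this it is enough to show that each component $\widehat{T}$ of $p^{-1}(T)$ separates $\widetilde{M}$. Indeed, the image of $\pi_1(\widehat{T})$ in $\pi_1(\widehat{N})\le\pi_1(N)$ lies inside $f_*(\pi_1(T))\cong\Z$, so $\widetilde{f}|_{\widehat{T}}$ is homotopic to a map $\widehat{T}\to S^1\to\widehat{N}$; hence $\widetilde{f}_*[\widehat{T}]=0$ in $H_2(\widehat{N};\Z)$, and since $\widetilde{f}_*$ is injective on $H_2$ we get $[\widehat{T}]=0$ in $H_2(\widetilde{M};\Z)$. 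A null-homologous two-sided closed orientable surface in a closed orientable $3$-manifold separates it (a non-separating one would have intersection number $\pm1$ with a suitable loop), so every component of $p^{-1}(T)$ separates $\widetilde{M}$; equivalently, every edge of $\Gamma$ is a bridge, and a connected graph all of whose edges are bridges is a tree. Counting vertices and edges therefore yields $[G:C]=[G:G_1]+[G:G_2]-1$.

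It remains to conclude by elementary arithmetic. Put $a=[G:G_1]$, $b=[G:G_2]$, $c=[G:C]$. Since $C\le G_i$ we have $a\mid c$ and $b\mid c$, which together with $c=a+b-1$ gives $a\mid(b-1)$ and $b\mid(a-1)$. Assuming without loss of generality $a\le b$, the relation $b\mid(a-1)$ together with $0\le a-1<b$ forces $a=1$, that is, $G_1=G$; then $c=b$, so the subgroup $C\le G_2$ has index $[G:G_2]$ in $G$, whence $C=G_2$. Thus $C=G_i$ for some $i$, as claimed. I expect the crux to be the middle paragraph: the point is that feeding the single covering $\widehat{N}$ into the covering-homology-equivalence hypothesis already forces every lift of $T$ to be null-homologous, hence separating, which rigidifies the dual graph into a tree; the surrounding covering-space bookkeeping and the final divisibility argument are then routine.
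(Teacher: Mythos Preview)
Your proof is correct and shares its crucial step with the paper's: both arguments pass to the induced regular $G$-cover and show that every component of $p^{-1}(T)$ separates the cover, using that its image in $H_2$ of the cover vanishes (via the factorization through a circle and the homology equivalence). Where you diverge is in how you extract the conclusion from this separation property.

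The paper proceeds by a direct minimality argument: among all lifts $S$ of $T$, choose one for which the ``smaller'' side $C(S)$ contains the fewest lifts of $T$, and show this minimum equals~$1$; then $C(S)$ is a component of $p^{-1}(A_i)$ with a single boundary torus, and regularity of the cover forces $b_0(p^{-1}(A_i))=b_0(p^{-1}(T))$, i.e., $C=G_i$. Your argument instead encodes the separation property as ``every edge of the dual bipartite graph $\Gamma$ is a bridge'', hence $\Gamma$ is a tree, and then the Euler-characteristic identity $c=a+b-1$ together with the divisibilities $a\mid c$, $b\mid c$ yields $a=1$ (say) and $C=G_2$. This is a clean repackaging of the same combinatorics (the paper's minimality argument is essentially locating a leaf of your tree), and it even gives a little more: you obtain not only $C=G_i$ for one index but also $G_j=G$ for the other. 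Both approaches are equally elementary once the separation step is in hand; yours has the virtue of making the underlying graph-theoretic structure explicit.
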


This claim is apparently weaker than (2); however, given the claim, by taking product homomorphisms it is easy to verify that in fact there
exists an $i\in \{1,2\}$ such that for any epimorphism $\a\colon\pi_1(N)\to G$ onto a finite group $G$ we have
\[ \im\left\{\pi_1(T) \to \pi_1(A_i)\xrightarrow{f_*}\pi_1(N)\to G\right\}= \im\left\{\pi_1(A_i)\xrightarrow{f_*}\pi_1(N)\to G\right\}.\]
Thus, in order to show (2), it is enough to prove the claim.

\begin{proof}[Proof of the claim]
We  denote
by $q\colon N^\a\to N $ the cover corresponding to $\alpha$, and we denote by $p\colon M^\a\to M$ the induced cover.
Note that both $p$ and $q$ are regular covers.
Let $S$ be a  component of $p^{-1}(T)\subseteq M^\a$. Note that $S$ is an incompressible torus in $M^\a$.
Consider the following commutative diagram:
\[ \xymatrix{ \pi_1(S)\ar[d]^{p_*}\ar[r]^{f_*} & \pi_1(N^\a)\ar[d]^{q_*} \\ \pi_1(T)\ar[r]^{f_*}& \pi_1(N)}\]
Since the right vertical map is injective and since we assumed  that $\im\{f_*:\pi_1(T)\to \pi_1(N)\}\cong \Z$ it follows that $\im\{f_*:\pi_1(S)\to \pi_1(N^\a)\}\cong \Z$.
By assumption we also have that  $H_*(M^\a;\Z)\to H_*(N^\a;\Z)$ is an isomorphism. We can therefore  use  the argument of (1)
to conclude that any component of $p^{-1}(T)$  is separating in $M^\a$.

Given a component $S$ of $p^{-1}(T)$ we denote the two components of $M^\a$ cut along $S$ by
$C(S)$ and $D(S)$. We denote by $c(S)$ respectively $d(S)$ the number of components of $p^{-1}(T)$
contained in $c(S)$ respectively $d(S)$. Note that $c(S)$ and $d(S)$ are at least $1$ since $C(S)$ and $D(S)$
contain $S$. We will henceforth assume that we named $C(S)$ and $D(S)$ such
that $c(S)\leq d(S)$.

Among all components of  $p^{-1}(T)$ we take $S$ such that $c(S)$ is minimal. We claim that $c(S)=1$. For a contradiction, suppose that $c(S)>1$. Then let $S'\ne S$ be a component of $p^{-1}(T)$ contained in $C(S)$.
Recall that $S'$ is separating in $M^\a$, hence it in particular separates $C(S)$ into two components.
Note that one of the two components will have two boundary components, one of which is $S$, and the other component will have just one boundary component.
Denote by  $E(S')$ the component of $C(S)$ cut along $S'$ which does not contain $S=\partial C(S)$.
Note that $E(S')$ has fewer components of $p^{-1}(T)$  than $c(S)$ and that $E(S')$
is in fact a component of $M^\a$ split along $S'$. In particular we have $E(S')=C(S')$ or $E(S')=D(S')$ and in either case $c(S')<c(S)$, contradicting our choice of $S$. This contradiction shows that  $c(S)=1$.

Note that $c(S)=1$ implies that there exists an $i\in \{1,2\}$ such that $C(S)$ is a component of $p^{-1}(A_i)$,
and that component of $p^{-1}(A_i)$ has just one boundary component.  By regularity of the cover $p\colon M^{\a}\to M$ this means that all components
of $p^{-1}(A_i)$ contain exactly one component of $p^{-1}(T)$, which implies  that $b_0(p^{-1}(A_i))=b_0(p^{-1}(T))$.
On the other hand we have
\begin{align*}
b_0(p^{-1}(A_i))&= |G|\, / \, \left|\im\left\{\pi_1(T) \to \pi_1(A_i)\xrightarrow{}\pi_1(N)\to G\right\}\right|,\\
b_0(p^{-1}(T))&= |G|\, / \, \left|\im\left\{\pi_1(A_i)\xrightarrow{}\pi_1(N)\to G\right\}\right|.
\end{align*}
Note that
\[ \im\left\{\pi_1(T) \to \pi_1(A_i)\xrightarrow{f_*}\pi_1(N)\to G\right\}\subseteq \im\left\{\pi_1(A_i)\xrightarrow{f_*}\pi_1(N)\to G\right\}\]
and therefore
\[ \im\left\{\pi_1(T) \to \pi_1(A_i)\xrightarrow{f_*}\pi_1(N)\to G\right\}= \im\left\{\pi_1(A_i)\xrightarrow{f_*}\pi_1(N)\to G\right\}.\]
This concludes the proof of the claim.
\end{proof}

We finally turn to the proof of (3). Let $A$ be a component of $M$ cut along $T$ as in (2). For a contradiction, suppose $\a\colon\pi_1(N)\to G$ is a morphism onto a finite group $G$
such that  $H_1(\ker\{\pi_1(A)\xrightarrow{f_*}\pi_1(N)\to G\};\Z)\ne \Z$.
As before, we  denote
by $q\colon N^\a\to N$ the  cover corresponding to $\a$ and we denote by $p\colon M^\a\to M$ the induced cover. Since $p$ and $q$ are regular,  all components of
$p^{-1}(A)$ respectively $p^{-1}(T)$ are diffeomorphic.
We now pick   components $A^\a$ and $T^\a$ of $p^{-1}(A)$ and $p^{-1}(T)$
such  that $T^\a\subseteq \partial A^\a$.
It follows immediately from
\[ \im\left\{\pi_1(T) \to \pi_1(A)\xrightarrow{f_*}\pi_1(N)\to G\right\}= \im\left\{\pi_1(A)\xrightarrow{f_*}\pi_1(N)\to G\right\}\]
that $p^{-1}(A)$ and $p^{-1}(T)$ have the same number of components, in particular we have
that in fact $T^\a=\partial A^\a$.
Also note that
\[ H_1\left(\ker\left\{\pi_1(A)\xrightarrow{f_*}\pi_1(N)\to G\right\};\Z\right)=H_1(A^\a;\Z).\]
By our assumption we therefore have $H_1(A^\a;\Z)\ne \Z$.

Recall that $T^\a$ is the only boundary component of $A^\a$. We denote by $B^\a$ the other component of $M^\a$ split along $T^\a$.
It follows from well-known Poincar\'e duality arguments
(cf., e.g., \cite[Lemma~3.3]{PS99}) that for any prime $p$ we have
\[ \dim(\im\{H_1(T^\a;\F_p)\to H_1(A^\a;\F_p)\})=1.\]
In particular $b_1(A^a)\geq 1$.
Our assumption that $H_1(A^\a;\Z)\ne \Z$ now implies that there exists a prime $p$
such that $\dim(H_1(A^\a;\F_p))\geq 2$. Let $p$ be such a prime.
We write $I=\im\{H_1(T^\a;\F_p)\to H_1(A^\a;\F_p)\}$ and we let $C\leq H_1(A^\a;\F_p)$ such that $I\oplus C=H_1(A^\a;\F_p)$.
Now consider the Mayer-Vietoris sequence
\[ H_1(T^\a;\F_p)\to H_1(A^\a;\F_p)\oplus H_1(B^\a;\F_p)\to H_1(M^\a;\F_p).\]
It follows immediately that
$$C\to H_1(A^\a;\F_p)\to H_1(M^\a;\F_p)$$
is injective.
On the other hand the inclusion induced morphism $ H_1(T^\a;\F_p)\to  H_1(M^\a;\F_p)$  factors through  $H_1(T^\a;\F_p) \to  H_1(A^\a;\F_p)$,
in particular it follows that the image of $C$ in $H_1(M^\a;\F_p)$ is not contained in the image of $ H_1(T^\a;\F_p)$ in $H_1(M^\a;\F_p)$.
In particular we have
\be \label{eq:noteq} \im\{H_1(T^\a;\F_p)\to  H_1(M^\a;\F_p)\}\subsetneq \im\{H_1(A^\a;\F_p)\to  H_1(M^\a;\F_p)\}.\ee
We will now show that this leads to a contradiction to (2) for an appropriate choice of epimorphism from $\pi_1(N)$ to a finite group.

Note that $[\ker(\a),\ker(\a)]\ker(\a)^p$ is a normal subgroup of $\pi_1(N)$. We can therefore consider the epimorphism
\[\b\colon\pi_1(N)\to K:=\pi_1(N)/[\ker(\a),\ker(\a)]\ker(\a)^p.\]
We obtain  a short exact sequence
\[ 1\to \ker(\a)/[\ker(\a),\ker(\a)]\ker(\a)^p\to   K\to   G=\pi_1(N)/\ker(\a)  \to 1.\]
We have
$$\ker(\a)/[\ker(\a),\ker(\a)]\ker(\a)^p\cong H_1(\ker(\a);\F_p)=H_1(N^\a;\F_p).$$
In particular $K$ is a finite group which contains  $H_1(N^\a;\F_p)$.

Now note  the restrictions of the map $\b\circ f_*\colon\pi_1(M^\a)\to K$
to $\pi_1(T^\alpha)$ and $\pi_1(A^\a)$
factor as follows:
\[ \xymatrix{ \pi_1(T^\a)\ar[d]\ar[r] & \pi_1(A^\a)\ar[d]&\\
 H_1(T^\a;\F_p)\ar[r] & H_1(A^\a;\F_p)\ar[r]& H_1(M^\a;\F_p)\ar[r]^-{f_*} &H_1(N^\a;\F_p)\subseteq K.}\]
Our assumption that $f$ induces a homology equivalence $M^\a\to N^\a$ implies that $f_*\colon H_1(M^\a;\F_p)\to H_1(N^\a;\F_p)$ is an isomorphism.
It therefore follows from (\ref{eq:noteq}) that
 $\pi_1(T^\a)$ and $\pi_1(A^\a)$ have different images under the map $\b\circ f_*$.

Now consider the following commutative diagram where the horizontal sequences are exact:
\[ \xymatrix{ 1\ar[r]& \pi_1(T^\a)\ar[d]\ar[r] & \pi_1(T)\ar[d]\ar[r]&G \ar[d]^=& \\
1\ar[r]& \pi_1(A^\a)\ar[d]\ar[r] & \pi_1(A)\ar[d]\ar[r]&G \ar[d]^=& \\
0\ar[r]& H_1(N^\a;\F_p)\ar[r]&K\ar[r] & G\ar[r]&1.}\]
It now follows that  $\pi_1(T)$ and $\pi_1(A)$ have different images under the map $\b\circ f_*$.
But this contradicts our choice of $A$.

\medskip

This concludes the proof of (3) and hence the proof of Lemma~\ref{lem:choicea}. \qed

\subsection{Proof of Proposition~\ref{prop:tinj}, part II}

We first state a few lemmas before we turn to the proof of Proposition~\ref{prop:tinj}.

\begin{lemma}\label{lem:posb1}
Let  $A$ be a graph manifold with one boundary component such that $H_1(A;\Z)=\Z$.
Given an integer $m>0$ denote by $A_m$ the $m$-fold cyclic cover of $A$ corresponding
to $\pi_1(A)\to \Z \to \Z/m\Z$.
Then there exists an $m$ with $b_1(A_m)>1$.
\end{lemma}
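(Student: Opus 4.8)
The statement concerns a graph manifold $A$ with one torus boundary component and $H_1(A;\Z) = \Z$, and asserts that some finite cyclic cover $A_m$ has $b_1 > 1$. The plan is to argue by contradiction: suppose $b_1(A_m) = 1$ for all $m > 0$ (we already know $b_1(A_m) \geq 1$ since the boundary torus contributes to $H_1$, by the Poincar\'e duality argument cited in \cite[Lemma~3.3]{PS99} and used in the previous subsection). Under this hypothesis I would extract rigid structural consequences for the JSJ pieces of $A$ and derive a contradiction with the fact that $A$ is a genuine graph manifold (i.e.\ has at least one Seifert fibered piece with hyperbolic base orbifold, or nontrivial gluing combinatorics).

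\textbf{Key steps.} First I would reduce to understanding $b_1$ of cyclic covers via the Alexander module: the infinite cyclic cover $\widetilde{A}$ corresponding to $\pi_1(A) \to \Z$ has $H_1(\widetilde{A};\Q)$ a finitely generated module over $\Q[t^{\pm 1}]$, and $b_1(A_m)$ is governed by the number of $m$-th roots of unity among the roots of the Alexander polynomial $\Delta_A(t)$ (plus a correction term). The hypothesis $b_1(A_m) = 1$ for all $m$ then forces $\Delta_A(t)$ to have \emph{no} roots of unity as zeros — equivalently, the $\Q[t^{\pm 1}]$-module $H_1(\widetilde{A};\Q)$ has no quotient on which $t$ acts with a root-of-unity eigenvalue. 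Second, I would analyze what this means geometrically. Each Seifert fibered piece $S$ of the JSJ decomposition of $A$ contributes to the Alexander module; when the base orbifold of $S$ is hyperbolic (has negative Euler characteristic), the first homology of $S$ is large, and the induced action of $t$ on the relevant piece of $H_1(\widetilde{A};\Q)$ will have eigenvalue $1$ (since the Seifert fibration direction is preserved and maps to a torsion-free part, while horizontal curves map in a controlled way). More concretely: a regular fiber of any Seifert piece is a curve whose class in $H_1(A;\Z) = \Z$ determines how $t$ acts, and the homology coming from the base surface produces $(t-1)$-torsion in the Alexander module — hence roots of unity (namely $1$) as zeros of $\Delta_A$ — contradicting the hypothesis. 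The remaining case is that every JSJ piece is itself a Seifert piece with small (Euclidean) base orbifold, i.e.\ $A$ is built from $S^1 \times D^2$'s, twisted $I$-bundles over the torus/Klein bottle, or $T^2 \times I$'s; here I would argue directly that such an $A$ with $H_1(A;\Z) = \Z$ is either a solid torus (excluded, since solid tori are not graph manifolds in the relevant sense, or trivially $b_1(A_m)=1$ throughout but then $A$ isn't a nontrivial graph manifold) — more carefully, one checks these low-complexity cases by hand and finds that the only ones with $H_1 = \Z$ where all cyclic covers have $b_1 = 1$ are excluded by the graph-manifold hypothesis, or conversely that some cover does increase $b_1$.

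\textbf{Main obstacle.} The delicate point is controlling the Alexander module of a graph manifold with boundary precisely enough to guarantee a root of unity among the zeros of $\Delta_A$ — the gluing maps between JSJ pieces can conspire, and the homology of the base orbifolds can in principle all map to the $t$-invariant part in a way that doesn't obviously force a new $b_1$-jump at a \emph{finite} level (one must rule out that all the relevant module structure is ``at infinity''). I expect the cleanest route is to use the Mayer--Vietoris / graph-of-groups structure of $A$ over its JSJ graph, combined with the known computation of $H_1$ of Seifert pieces and of their infinite cyclic covers, to show that either the underlying graph has a cycle (producing extra $H_1$ detected in some finite cover via the usual covering-space unwrapping of the graph) or some vertex piece has hyperbolic base (producing extra $H_1$ from the base surface's homology, detected after passing to a cover where the fiber class becomes primitive), and in the genuinely degenerate remaining case $A$ reduces to a Sol-type or Seifert-type manifold already handled, or fails to satisfy the hypotheses. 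This case analysis over the JSJ graph, rather than a single clean module-theoretic statement, is where the real work lies.
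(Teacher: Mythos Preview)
Your reduction to the Alexander module and the connection between roots of unity among the zeros of $\Delta_A(t)$ and jumps in $b_1(A_m)$ is exactly the right framework, and this part matches the paper. But from that point on you are making your life much harder than necessary, and the hard part of your plan is not actually carried out.

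The paper's proof is three lines. It invokes two results you did not cite: \cite[Proposition~2.9]{PS99} gives $\Delta_A\neq 1$ outright (so the polynomial has \emph{some} zero), and \cite[Theorem~12.1]{EN85} says that for a graph manifold \emph{every} zero of $\Delta_A$ is a root of unity. Combining these, $\Delta_A$ has a primitive $m$th root of unity as a zero for some $m$, and then the classical formula (e.g.\ \cite[8.21]{BZ85}) gives $b_1(A_m)>1$. That's it.

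Your plan, by contrast, is to reprove a weak form of the Eisenbud--Neumann theorem by a JSJ/Mayer--Vietoris case analysis. This is in principle possible, but what you wrote is not a proof: the assertion that a Seifert piece with hyperbolic base orbifold forces $(t-1)$-torsion in the Alexander module depends on how the abelianization $\pi_1(A)\to\Z$ restricts to that piece and how the gluing tori sit in homology, and you have not controlled this. Your ``low-complexity'' endgame (all pieces Euclidean) is likewise only gestured at. So the proposal has the correct opening move but a genuine gap where the work should be: either cite \cite{EN85} and \cite{PS99} and be done, or commit to a full Mayer--Vietoris computation of $H_1(\widetilde{A};\Q)$ over the JSJ graph --- which is exactly what Eisenbud--Neumann do, and is substantially more than a lemma.
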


\begin{proof} 
Let $\Delta\in \Z[t^\Z]$ be the Alexander polynomial corresponding to the Alexander module $H_1(A;\Z[t^\Z])$.
It follows from \cite[Proposition~2.9]{PS99} that $\Delta \ne 1$. By
\cite[Theorem~12.1]{EN85} all zeroes of $\Delta$ are roots of unity.
Let $m$ be such that $\Delta$ has a zero which is an $m$th root of unity. It now follows from \cite[p.~17]{Go77} or \cite[8.21]{BZ85} that $b_1(A_m)>1$.
\end{proof}

We recall the following well-known lemma.

\begin{lemma} \label{lem:biggerb1}
Let $\ti{X}\to X$ be a finite cover of manifolds. Then $H_1(\ti{X};\Q)\to H_1(X;\Q)$ is surjective, in particular $b_1(\ti{X})\geq b_1(X)$.
\end{lemma}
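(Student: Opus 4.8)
The plan is to prove that the map $p_*\colon H_1(\ti X;\Q)\to H_1(X;\Q)$ induced by the finite covering $p\colon\ti X\to X$ is surjective; the assertion $b_1(\ti X)\ge b_1(X)$ is then immediate, since a surjective linear map between vector spaces cannot decrease dimension. We may assume $X$ is connected, treating connected components separately otherwise, and we write $n<\infty$ for the number of sheets of $p$. The tool is the transfer (umkehr) homomorphism.

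First I would construct a transfer map $\tau\colon C_*(X;\Z)\to C_*(\ti X;\Z)$ on singular chain complexes. Given a singular simplex $\sigma\colon\Delta^k\to X$, the domain $\Delta^k$ is simply connected, so by the lifting criterion $\sigma$ admits exactly $n$ lifts $\ti\sigma_1,\dots,\ti\sigma_n\colon\Delta^k\to\ti X$; put $\tau(\sigma)=\ti\sigma_1+\cdots+\ti\sigma_n$ and extend $\Z$-linearly. One checks that $\tau$ commutes with the boundary operator, the key point being that precomposition with a face inclusion $\Delta^{k-1}\hookrightarrow\Delta^k$ carries the $n$ lifts of $\sigma$ bijectively onto the $n$ lifts of the corresponding face of $\sigma$; hence $\tau$ is a chain map, and by construction $p_\#\circ\tau=n\cdot\id$ on $C_*(X;\Z)$. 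Passing to homology and tensoring with $\Q$, one gets $p_*\circ\tau_*=n\cdot\id$ on $H_*(X;\Q)$. Since $n$ is a unit in $\Q$, the map $p_*$ is (split) surjective in every degree, in particular in degree $1$, which is what we wanted.

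The only point that needs care is the verification that $\tau$ is well defined and a chain map, i.e.\ the compatibility of the unordered $n$-element set of lifts with the face maps; this is routine from uniqueness of path-lifting but ought to be stated explicitly. As an alternative that sidesteps the chain-level bookkeeping, when $p$ is a \emph{regular} cover with finite deck group $D$ one has $H_*(X;\Q)\cong H_*(\ti X;\Q)_D$ (the $D$-coinvariants), because $\lvert D\rvert$ is invertible in $\Q$, and the natural projection $H_*(\ti X;\Q)\twoheadrightarrow H_*(\ti X;\Q)_D$ is manifestly surjective. Since not every finite cover is regular, the transfer argument above is the more direct route, and that is the one I would present.
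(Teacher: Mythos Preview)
Your transfer argument is correct and is the standard proof of this fact. The paper itself does not supply a proof: the lemma is simply introduced as ``well-known'' and left unproved. So there is nothing to compare against; your write-up would serve perfectly well as the omitted justification.
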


Finally we show a group-theoretic fact (through which Theorem~\ref{thm:gmresp} will enter the story):

\begin{lemma}\label{lem:order}
Let $\pi$ be a group with the property  that given any prime $p$ the group $\pi$ is virtually residually $p$.
Let $g\in \pi$ be an element of infinite order and let $m$ be a positive integer.
Then there exists an epimorphism $\a\colon\pi\to G$ onto a finite group $G$
such that $m$ divides the order of $\a(g)\in G$.
\end{lemma}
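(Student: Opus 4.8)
The plan is to reduce to the case where $m$ is a prime power, and then to convert a $p$-group quotient of a residually $p$ finite-index subgroup into a finite quotient of $\pi$ by passing to a normal core.

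First I would reduce to prime powers. Write $m=p_1^{e_1}\cdots p_r^{e_r}$ with the $p_i$ distinct primes and $e_i\geq 1$ (the case $m=1$ being vacuous). It suffices to find, for each $i$, an epimorphism $\a_i\colon\pi\to G_i$ onto a finite group with $p_i^{e_i}\mid\ord(\a_i(g))$: the map $\pi\to G_1\times\cdots\times G_r$ given by $x\mapsto(\a_1(x),\dots,\a_r(x))$, corestricted to its image $G$, is then an epimorphism onto a finite group, and the order of the image of $g$ is the least common multiple of the $\ord(\a_i(g))$, hence divisible by each $p_i^{e_i}$ and therefore by $m$. So assume from now on $m=p^e$ with $p$ prime and $e\geq 1$.

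Now I would pick, using the hypothesis (and the observation from the introduction that the finite-index subgroup witnessing virtual residual $p$-ness may be taken normal), a normal subgroup $H\trianglelefteq\pi$ of finite index which is residually $p$; put $d=[\pi:H]$, so that $g^d\in H$. Since $g$ has infinite order, so does $g^d$, hence $(g^d)^{p^{e-1}}\neq 1$ in $H$; residual $p$-ness of $H$ then yields a morphism $\b\colon H\to P$ onto a finite $p$-group $P$ with $\b\bigl((g^d)^{p^{e-1}}\bigr)\neq 1$, so that $\ord(\b(g^d))$ is a power of $p$ not dividing $p^{e-1}$, i.e. $p^e\mid\ord(\b(g^d))$. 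Let $L=\ker\b$ and let $K=\bigcap_{x\in\pi}x^{-1}Lx$ be its normal core in $\pi$, which is normal of finite index in $\pi$ by Lemma~\ref{Finite index}. Because $H$ is normal, every conjugate $x^{-1}Lx$ is contained in $H$ with $[H:x^{-1}Lx]=[H:L]=|\b(H)|$ a power of $p$, so $H/K$ embeds in a finite product of copies of $\b(H)$ and is a finite $p$-group; thus $G:=\pi/K$ is finite (an extension of $\pi/H$ by the $p$-group $H/K$). Let $\a\colon\pi\to G$ be the quotient map. The surjection $H/K\to H/L=\b(H)$ carries $g^dK$ to $\b(g^d)$, whence $\ord(\b(g^d))\mid\ord(g^dK)=\ord(\a(g)^d)\mid\ord(\a(g))$, so $p^e\mid\ord(\a(g))$, as required.

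The only genuinely nontrivial step is this last \emph{promotion} from $H$ to $\pi$: residual $p$-ness is available only virtually, so one cannot map $\pi$ directly to a $p$-group detecting a high power of $g$; the normal-core construction is what turns the $p$-group quotient of $H$ into a (finite $p$-group)-by-(finite) quotient of $\pi$ while keeping enough of the order of $\b(g^d)$. Everything else is routine bookkeeping with orders of elements in finite groups (in particular $\ord(\a(g)^d)\mid\ord(\a(g))$ and the behaviour of orders under surjections).
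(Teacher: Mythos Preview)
Your proof is correct and follows essentially the same strategy as the paper's: reduce to prime powers via products, pass to a normal finite-index residually $p$ subgroup, detect a high $p$-power order of a power of $g$ in a $p$-group quotient, and then take the normal core (the paper phrases this as an explicit product of conjugate homomorphisms) to obtain a finite quotient of $\pi$. Your divisibility chain $\ord(\b(g^d))\mid\ord(\a(g)^d)\mid\ord(\a(g))$ is in fact a little cleaner than the paper's endgame computation.
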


\begin{proof}
We start out with the following claim.

\begin{claim}
Given any prime $p$ and any integer $n>0$ there exists an epimorphism $\a:\pi\to G$ onto a finite group $G$
such that $p^{n}$ divides the order of $\a(g)\in G$.
\end{claim}

This claim easily yields the lemma: Write $m=p_1^{n_1}\cdot \dots \cdot p_k^{n_k}$ where $p_1,\dots,p_k$ are distinct primes and $n_1,\dots,n_k>0$.
By the claim, for each $i\in \{1,\dots,k\}$ we have an epimorphism $\a_i\colon\pi\to G_i$ onto a finite group $G_i$
such that $p_i^{n_i}$ divides the order of $\a_i(g)\in G_i$.
It is now clear that
\[ \a_1\times \dots \times \a_k\colon \pi \to G_1\times \dots \times G_k \]
has the desired properties.

\medskip

Thus it remains to prove the claim. Let $p$ and $n>0$ be given.
By assumption there exists a finite index normal subgroup $\ti{\pi}\subseteq \pi$ which is residually $p$.
Let $l\in\N$ be such that $\ll g\rr \cap \ti{\pi} = \ll g^l\rr$. We write $\ti{g}=g^l\in \ti{\pi}$. Since $\ti{g}$ has infinite order
and since $\ti{\pi}$ is residually $p$
there exists an epimorphism $\ti{\b}\colon\ti{\pi}\to P$ onto a $p$-group such that $\ti{\b}\left(\ti{g}^{p^{n-1}}\right)\neq 1$.
Now let $h_1=1,h_2,\dots,h_r\in \pi$ be such that $\pi=\bigcup_{i=1}^r \ti{\pi}h_i$. We consider
\[ \ba{rcl} \ti{\a}\colon\ti{\pi}&\to & P\times \dots \times P =P^r\\
x &\mapsto & \big(\ti{\b}(h_1xh_1^{-1}),\dots,\ti{\b}(h_rxh_r^{-1})\big).\ea \]
 Let $\ti{d}$ be the order of $\ti{\a}\left(\ti{g}\right)$. Evidently $\ti{d}$ is a power of $p$, i.e., $\ti{d}=p^m$ for some $m\in\N$.
 Note that $\ti{\b}\left(\ti{g}^{p^{n-1}}\right)$ is non-trivial, and therefore
  $\ti{\b}\left(\ti{g}^{p^{i}}\right)$ is non-trivial
for $i=0,\dots,n-1$. It follows that $\ti{d}\geq p^n$, hence $m\geq n$.

By construction, $\ker(\ti{\a})$ is normal in $\pi$.
We now identify $\tipi/\ker(\ti{\a})$ with a subgroup of $P^r$ and we denote the projection map $\pi\to \pi/\ker(\ti{\a})=:G$ by $\a$.  Note that $G$ is in general not a $p$-group.
We now get a commutative diagram
\[ \xymatrix{ \ti{\pi} \ar@{^`->}[d] \ar[r]^-{\ti{\a}} &\ti{\pi}/\ker(\ti{\a})\subseteq P^r \ar@{^`->}[d] \\
\pi\ar[r]^-\a & \pi/\ker(\ti{\a}) =G.}\]

Let $d$ be the order of ${\a}(g)$ in $G$.
Note that
$$\a(g)^{lp^{m}}=\a(g^l)^{p^m}=\a(\ti{g})^{p^m}=\ti{\a}(\ti{g})^{p^m}=1.$$
In particular $d|lp^m$.
We now write $d=kp^r$ where $k|l$ and $r\leq m$. We then have
\[ \ti{\a}(\ti{g})^{p^r}={\a}(\ti{g})^{p^r}=\a(g^l)^{p^r}=\a(g)^{lp^r}=\left(\a(g)^{kp^r}\right)^{\frac{l}{k}}=1.\]
In particular the order of $\ti{\a}(\ti{g})$, which equals $p^m$, divides $p^r$. It now follows that $p^n$ divides $d$.
This concludes the proof of the  claim.
\end{proof}

\subsection{Conclusion of the proof of Proposition~\ref{prop:tinj}}

We are now in a position to prove Proposition~\ref{prop:tinj}.
So assume we are given 
closed irreducible orientable graph manifolds $M$ and $N$ with infinite fundamental group, and a covering homology equivalence $f\colon M\to N$.
Let $T$ be a torus  of the JSJ decomposition of $M$.
It follows from \cite[(4.1.10)]{PS99}  that $f_*(\pi_1(T))$ cannot be trivial.
Since $N$ is irreducible with infinite fundamental group it follows that $\pi_1(N)$ is torsion-free,
 in particular we see that either $f_*\colon\pi_1(T)\to \pi_1(N)$ is injective or  $\im\{f_*:\pi_1(T)\to \pi_1(N)\}\cong \Z$.

For a contradiction, suppose  $\im\{f_*:\pi_1(T)\to \pi_1(N)\}\cong \Z$.
We denote by $A$ the component of $M$ cut along $T$ as in Lemma~\ref{lem:choicea},~(2).
Note that $A$ is a graph manifold with one boundary component and by Lemma~\ref{lem:choicea},~(3) we have $H_1(A;\Z)=\Z$.
Given an integer $m>0$, we denote by $A_m$ the $m$-fold cyclic cover corresponding
to $\pi_1(A)\to \Z \to \Z/m\Z$.
By Lemma~\ref{lem:posb1} we can find  $m$ such that $b_1(A_m)>1$.

Let $t\in \pi_1(T)$ be an element such that $f_*(t)$ generates  $\im\{f_*:\pi_1(T)\to \pi_1(N)\}\cong \Z$.
By Theorem \ref{thm:gmresp} and Lemma~\ref{lem:order} we can find  an epimorphism $\a\colon\pi_1(N)\to G$  onto a finite group $G$
such that $m$ divides the order of $\a(f_*(t))\in G$. We denote as above
by $q\colon N^\a\to N $ and  $p\colon M^\a\to M$ the corresponding induced covers and we let $A^\a$ be one of the components of $p^{-1}(A)$.

We denote by $d$ the order of $\a(f_*(t))\in G$ and we write
$$H=\im\left\{\pi_1(T)\to\pi_1(A)\xrightarrow{f_*} \pi_1(N) \xrightarrow{\alpha} G\right\}.$$
Note that $H\cong \Z/d\Z$. By our choice of $A$ we have
$$H=\im\left\{\pi_1(A)\xrightarrow{f_*} \pi_1(N)\xrightarrow{\alpha} G\right\}.$$
In particular the map $\pi_1(A)\to G$ factors through $\pi_1(A)\to \Z\to \Z/d\Z$.
It now follows that $A^\a=A_d$. Since $A_d$ is a finite cover of $A_m$,
Lemma~\ref{lem:biggerb1} yields $b_1(A^\a)\geq b_1(A_m)>1$;
this  contradicts Lemma~\ref{lem:choicea},~(3). \qed

\section{Proof of Theorem~\ref{thm:ps2}}\label{section:ps2}

\noindent
For the reader's convenience we recall the statement of Theorem~\ref{thm:ps2}.

\begin{theorem}
Let $M$ and $N$ be closed and  irreducible $3$-manifolds with infinite fundamental group.
Assume that $\pi_1(M)$  is residually finite solvable.
Let $f\colon M\to N$ be a covering homology equivalence.
Then $f$ is homotopic to a homeomorphism.
\end{theorem}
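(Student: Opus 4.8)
The plan is to show that $f_*\colon\pi_1(M)\to\pi_1(N)$ is an isomorphism; granting that, the theorem follows from standard $3$-manifold topology. (We give the argument for orientable $M$ and $N$; the general case follows by passing to orientation double covers.) Since $M$ and $N$ are closed irreducible $3$-manifolds with infinite fundamental group, they are aspherical: by the Sphere Theorem $\pi_2=0$, and the universal cover --- a simply connected, noncompact (as $\pi_1$ is infinite) $3$-manifold with $\pi_2=0$ --- has trivial reduced integral homology and hence, by Hurewicz and Whitehead, is contractible. In particular $\pi_1(M)$ and $\pi_1(N)$ are torsion-free. As $f$ is a $\Z$-homology equivalence it has degree $\pm1$ (inspect $H_3$), so $f_*$ is surjective. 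Write $G:=\pi_1(M)$ and $K:=\ker(f_*)$, so that $\pi_1(N)\cong G/K$. Once $K=1$ is established, $f$ is a $\pi_1$-isomorphism between aspherical complexes, hence a homotopy equivalence, and by topological rigidity for closed aspherical $3$-manifolds --- Waldhausen's theorem in the Haken case, and geometrization in general --- it is homotopic to a homeomorphism.

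So everything comes down to proving $K=1$. Since $G$ is residually finite solvable, the intersection of all finite-index normal subgroups $U\trianglelefteq G$ with $G/U$ solvable is trivial, so it suffices to show $K\subseteq U$ for each such $U$. The key local statement is: \emph{if $\Lambda\leq G$ is a finite-index subgroup with $K\subseteq\Lambda$, then $K\subseteq[\Lambda,\Lambda]$}. Indeed, $\Lambda/K$ is a finite-index subgroup of $\pi_1(N)=G/K$, hence corresponds to a finite cover $\widetilde{N}\to N$; the induced cover $\widetilde{M}\to M$ is connected (as $f_*$ is surjective) with $\pi_1(\widetilde{M})=f_*^{-1}(\Lambda/K)=\Lambda$ (here $K\subseteq\Lambda$ is used), and $\widetilde{f}\colon\widetilde{M}\to\widetilde{N}$ is a $\Z$-homology equivalence by hypothesis. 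Since $H_1$ of a space is the abelianization of its fundamental group, $\widetilde{f}_*$ identifies $\Lambda/[\Lambda,\Lambda]=H_1(\widetilde{M};\Z)$ with $(\Lambda/K)/[\Lambda/K,\Lambda/K]=H_1(\widetilde{N};\Z)$ through the canonical surjection, and that surjection is injective precisely when $K\subseteq[\Lambda,\Lambda]$.

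Now fix such a $U$, say of derived length $d$, so that $G^{(d)}\subseteq U$ while $(G/U)^{(d)}=1$, where $G^{(i)}$ denotes the $i$-th derived subgroup of $G$. I claim, by induction on $i=0,1,\dots,d$, that $K\subseteq G^{(i)}U$. For $i=0$ this is merely $K\subseteq G$. Suppose $K\subseteq\Lambda_i:=G^{(i)}U$; as $G^{(i)}$ is characteristic and $U$ normal in $G$, $\Lambda_i$ is a finite-index normal subgroup of $G$ containing $K$, so the key local statement gives $K\subseteq[\Lambda_i,\Lambda_i]$. Moreover $[\Lambda_i,\Lambda_i]\subseteq G^{(i+1)}U$, since reducing modulo $U$ gives $[\Lambda_i,\Lambda_i]\,U/U=\bigl[(G/U)^{(i)},(G/U)^{(i)}\bigr]=(G/U)^{(i+1)}=G^{(i+1)}U/U$. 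Hence $K\subseteq G^{(i+1)}U$, and taking $i=d$ yields $K\subseteq G^{(d)}U=U$. Therefore $K=1$.

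The heart of the argument is this bootstrap, and two features are essential. First, the word ``covering'' in ``covering homology equivalence'' is used crucially: one needs $\widetilde{f}$ to be a homology equivalence on the covers of $N$ attached to the subgroups $\Lambda_i/K$, and since $\ker(\widetilde{f}_*)$ stays equal to $K$ in every finite cover, without a residual hypothesis on $\pi_1(M)$ there would be no leverage at all. Second, the derived series is exactly the filtration matched to the hypothesis ``residually finite solvable,'' playing the role that the lower central $p$-series plays against ``residually $p$'' in the Perron--Shalen proof of Theorem~\ref{thm:ps}. I expect the concluding rigidity step to be entirely routine --- and in the cases of primary interest, such as $M$ fibered (hence Haken), it is just Waldhausen's theorem.
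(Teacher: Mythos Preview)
Your proof is correct and follows essentially the same strategy as the paper: show $f_*$ is surjective via degree, then use the covering homology equivalence on the tower of solvable covers to force $\ker f_*=1$, and finish with $3$-manifold rigidity. The only notable difference is in the packaging of the injectivity step: the paper proves a general lemma (its Lemma~\ref{lem:completion}) that every homomorphism $\pi_1(M)\to S$ to a finite solvable $S$ factors through $\pi_1(N)$, arguing by induction on the derived length of $S$ via Shapiro's Lemma and a short diagram chase, whereas you argue directly that $K\subseteq U$ for each finite-index normal $U$ with solvable quotient by descending through $\Lambda_i=G^{(i)}U$. Your version is a bit more elementary (no twisted homology, no Shapiro) and tailored to exactly what is needed; the paper's version isolates a reusable statement about maps inducing $H_1$-isomorphisms on solvable covers. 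The concluding rigidity step in the paper is spelled out by cases (Seifert via Scott and Orlik, Haken via Waldhausen, hyperbolic via Mostow and geometrization), which amounts to the same thing as your appeal to the Borel conjecture for closed aspherical $3$-manifolds.
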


We will need the following purely group theoretic lemma.

\begin{lemma}\label{lem:completion}
Let $\phi\colon G\to H$ be a group morphism which, for every normal subgroup $\ti{H}$ of $H$ with finite solvable quotient $H/\ti{H}$,
induces an isomorphism $H_1(\ti{G};\Z)\to H_1(\ti{H};\Z)$, where $\ti{G}=\phi^{-1}(\ti{H})$.
Then  for every finite solvable group $S$, the map
\be \label{equ:prosolv} \hom(H,S) \xrightarrow{\phi^*} \hom(G,S)\ee
is a surjection.
Hence if in addition $\phi$ is surjective and $G$ is residually finite solvable, then $\phi$ is bijective.
\end{lemma}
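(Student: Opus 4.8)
The plan is to prove surjectivity of \eqref{equ:prosolv} by induction on the derived length of the finite solvable group $S$, lifting morphisms one abelian layer at a time through the derived series. Given $\psi\colon G\to S$, let $S'=[S,S]$, so $S/S'$ is finite abelian and the derived length of $S'$ is one less. The composite $G\xrightarrow{\psi} S\to S/S'$ factors through $H_1(G;\Z)$; I want to push it across to $H$. To do this, first I would reduce to the case that $\psi$ is surjective (replace $S$ by $\psi(G)$, which is still solvable), and then handle the abelian base case separately: if $S$ is finite abelian, the hypothesis with $\ti H=H$ (so $\ti G=G$) gives that $H_1(G;\Z)\to H_1(H;\Z)$ is an isomorphism, hence $\hom(H,S)=\hom(H_1(H;\Z),S)\to\hom(H_1(G;\Z),S)=\hom(G,S)$ is a bijection; in particular $\psi$ extends.

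For the inductive step, with $\psi\colon G\twoheadrightarrow S$ and $S$ of derived length $\geq 2$: by the abelian case applied to $G\to S/S'$ there is $\overline{\phi}\colon H\to S/S'$ with $\overline{\phi}\circ\phi$ equal to the composite $G\to S\to S/S'$. Set $\ti H=\overline{\phi}^{-1}(1)\triangleleft H$, a normal subgroup with finite abelian (hence solvable) quotient $S/S'$, and $\ti G=\phi^{-1}(\ti H)$; note $\ti G$ contains $\psi^{-1}(S')$, and after adjusting we may arrange $\ti G=\psi^{-1}(S')$ (shrink $\ti H$ if necessary, or observe the two coincide once $\overline\phi$ is chosen compatibly). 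Now $\psi$ restricts to $\ti\psi\colon\ti G\to S'$, and since $S'$ is solvable of smaller derived length, the inductive hypothesis applied to $\phi|_{\ti G}\colon\ti G\to\ti H$ — whose hypotheses hold because any normal subgroup of $\ti H$ with finite solvable quotient is automatically of finite solvable index in $H$, so the isomorphism on first homology is inherited from the hypothesis on $\phi$ — yields, after one more routine check, a morphism $\ti H\to S'$ extending $\ti\psi$. The genuine work is then to glue $\overline\phi\colon H\to S/S'$ and the lift $\ti H\to S'$ into a single morphism $H\to S$ extending $\psi$: the obstruction to assembling these into a morphism of the extension $1\to S'\to S\to S/S'\to 1$ lives in $H^2$ with appropriate coefficients, and one must verify that $\phi$ pulls this obstruction class back to the (vanishing) class coming from $\psi$ and that injectivity of the relevant $H^2$ map — again a consequence of the homology hypothesis, via the five-term exact sequence or a direct cocycle argument — forces it to vanish. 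This cohomological gluing is the step I expect to be the main obstacle; an alternative that sidesteps it is to work entirely inside the finite quotients, realizing $S$ as an inverse limit of its images and using that $G\to S$ factors through $G/(\text{a term of the derived series composed with a congruence subgroup})$, but some version of the extension-lifting still has to be confronted.

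Finally, the last sentence: suppose $\phi$ is surjective and $G$ is residually finite solvable. Surjectivity of $\phi$ already gives injectivity of $\phi^*$ on $\hom(H,S)$ for every $S$, and we have just shown it is onto; so $\phi^*$ is a bijection for all finite solvable $S$. To conclude $\phi$ is injective, take $1\neq g\in\ker\phi$; by residual finite solvability there is $\psi\colon G\to S$ with $S$ finite solvable and $\psi(g)\neq 1$. By surjectivity of \eqref{equ:prosolv} there is $\chi\colon H\to S$ with $\chi\circ\phi=\psi$, whence $\psi(g)=\chi(\phi(g))=\chi(1)=1$, a contradiction. Hence $\ker\phi=1$ and $\phi$ is bijective.
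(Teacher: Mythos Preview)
Your overall plan (induction on derived length, reduction to surjective $\psi$, abelian base case via the $H_1$-hypothesis at $\ti H=H$, and the final paragraph deducing injectivity of $\phi$) is fine and matches the paper. The inductive step, however, has two genuine gaps.

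First, you try to apply the inductive hypothesis to the restricted map $\phi|_{\ti G}\colon\ti G\to\ti H$. For this you need the lemma's hypothesis to hold for that map: for every $K\triangleleft\ti H$ with $\ti H/K$ finite solvable, $\phi$ must induce an isomorphism $H_1(\phi^{-1}(K);\Z)\to H_1(K;\Z)$. Your justification (``any normal subgroup of $\ti H$ with finite solvable quotient is automatically of finite solvable index in $H$'') does not give this: such a $K$ need not be normal in $H$, so the original hypothesis on $\phi$ does not apply to it, and passing to the core $K_H$ only gives the isomorphism for $K_H$, not for $K$. Second, even granting a lift $\ti H\to S'$, you must assemble it with $\overline\phi\colon H\to S/S'$ into a morphism $H\to S$. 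Your $\ti H\to S'$ comes with no $H$-equivariance, and your remark about an obstruction in $H^2$ is off target once the derived length of $S$ exceeds $2$, since then $S'=[S,S]$ is not abelian and there is no cohomology group to house the obstruction. You yourself flag this gluing as ``the main obstacle,'' and indeed it is not resolved.

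The paper avoids both problems by peeling the derived series from the \emph{other} end. With $\ell(S)=n+1$ it sets $\overline S=S/S^{(n)}$ (so $\overline S$ has derived length $n$ and $S^{(n)}$ is abelian), applies the inductive hypothesis to the composite $G\to S\to\overline S$ to get $\overline\beta\colon H\to\overline S$, and then observes that $\ker(\overline\beta)$ \emph{is} a normal subgroup of $H$ with finite solvable quotient, so the $H_1$-hypothesis applies directly to it. A five-lemma argument on the extensions
\[
0\to H_1(\ker(\overline\alpha);\Z)\to G/[\ker(\overline\alpha),\ker(\overline\alpha)]\to\overline S\to 1
\]
and its $H$-analogue yields an isomorphism $G/[\ker(\overline\alpha),\ker(\overline\alpha)]\xrightarrow{\ \cong\ }H/[\ker(\overline\beta),\ker(\overline\beta)]$; since $S^{(n)}$ is abelian, $\alpha$ factors through the left-hand quotient, and composing with the inverse isomorphism gives the desired $\beta\colon H\to S$. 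No equivariance check, no $H^2$, and the inductive hypothesis is applied to $\phi$ itself rather than to a restriction. I recommend reorganizing your inductive step along these lines.
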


\begin{proof}
We will show the first statement by induction on the derived length $\ell(S)$ of $S$.
If $\ell(S)=0$, then $S$ is the trivial group and there is nothing to prove.
Now suppose that the claim holds for any solvable group $S$ with $\ell(S)\leq n$.
Let $\a\colon G\to S$ be a morphism to a finite solvable group with $\ell(S)=n+1$; replacing $S$ with $\alpha(G)$ if necessary we may assume that $\alpha$ is onto.
We write $\ol{S}=S/S^{(n)}$ (where $S^{(n)}=\text{$n$th term of the derived series of $S$}$) and we denote the epimorphism $\alpha\colon G\to S\to S/S^{(n)}=\ol{S}$ by $\ol{\a}$.
By our induction assumption we know that $\ol{\a}=\ol{\b}\circ \phi$ for some epimorphism $\ol{\b}\colon H\to \ol{S}$.

We have the following commutative diagram:
\[  \xymatrix{ H_1(G;\Z[\ol{S}])\ar[d]^{\phi_*}\ar[r]^-\cong &\ker(\ol{\a})/[\ker(\ol{\a}),\ker(\ol{\a})]\ar[d]^{\phi}\\
H_1(H;\Z[\ol{S}])\ar[r]^-\cong &\ker(\ol{\b})/[\ker(\ol{\b}),\ker(\ol{\b})],}\]
where the horizontal maps are isomorphisms by Shapiro's Lemma (cf.~\cite[6.3.2, 6.3.4]{We94}), and where the right (and hence also the left) vertical map is an isomorphism by the hypothesis on $\phi$.
This diagram now gives rise to the following commutative diagram:
\[\xymatrix{
0\ar[r]& H_1(G;\Z[\ol{S}])\ar[d]^{\phi_*} \ar[r] & G/[\ker(\ol{\a}),\ker(\ol{\a})]\ar[d]^{\phi}\ar[r]^-{\ol{\a}} & \ol{S}\ar[d]^=\ar[r]& 1\\
0\ar[r]& H_1(H;\Z[\ol{S}]) \ar[r] & H/[\ker(\ol{\b}),\ker(\ol{\b})]\ar[r]^-{\ol{\b}} & \ol{S}\ar[r]& 1}\]
Note that the two horizontal sequences are exact and note that the left and right vertical map is an isomorphism.
We thus conclude that the middle map is also an isomorphism.
Now note that $\a$ factors as
$$ G\to G/[\ker(\ol{\a}),\ker(\ol{\a})]\to S;$$
we denote the second map by $\a$ as well.
We finally consider the following commutative diagram:
\[ \xymatrix{
G\ar[r]^{\phi} \ar[d] &H\ar[d] \\
G/[\ker(\ol{\a}),\ker(\ol{\a})]\ar[r]^{\phi}_\cong \ar[d]^{\a} & H/[\ker(\ol{\b}),\ker(\ol{\b})]\ar[dl] \\
S&}\]
It is now clear that  $\a=\b\circ \phi$ for some $\b\colon H\to S$.

Now assume $\phi$ is surjective and $G$ is residually finite solvable. For a contradiction suppose $g\in \ker\phi$ with $g\neq 1$.
Since $G$ is residually finite solvable we can find a morphism  $\a\colon G\to S$ to a finite solvable group
such that $\a(g)\neq 1$. By surjectivity of \eqref{equ:prosolv} we can find a homomorphism $\b\colon H\to S$ such that $\a=\b\circ \phi$.
Thus 
$(\b\circ \phi)(g)=\a(g)\neq 1$, hence $\phi(g)\neq 1$.
This contradiction shows  that $\phi$ is injective.
\end{proof}

Now assume we are given $f\colon M\to N$ as in the theorem.

\begin{claim}
The map $f_*\colon\pi_1(M)\to \pi_1(N)$ is an isomorphism.
\end{claim}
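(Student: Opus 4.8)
The plan is to deduce that $f_*$ is an isomorphism by combining Lemma~\ref{lem:completion} with the hypothesis that $f$ is a covering homology equivalence. First I would observe that since $M$ and $N$ are closed aspherical $3$-manifolds (irreducible with infinite fundamental group, hence aspherical by the Sphere Theorem together with Hopf's theorem, or directly since their universal covers are contractible open $3$-manifolds) of the same dimension, proving $f_*\colon\pi_1(M)\to\pi_1(N)$ is an isomorphism amounts to checking the two hypotheses of the last implication in Lemma~\ref{lem:completion} with $G=\pi_1(M)$, $H=\pi_1(N)$, $\phi=f_*$: namely that $\phi$ is surjective, that $G$ is residually finite solvable, and that for every normal $\ti H\triangleleft H$ with $H/\ti H$ finite solvable, $f_*$ induces an isomorphism $H_1(\ti G;\Z)\to H_1(\ti H;\Z)$ for $\ti G=f_*^{-1}(\ti H)$.

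The residual finite solvability of $G=\pi_1(M)$ is exactly the standing hypothesis of the theorem, so nothing is needed there. For the homological condition: given $\ti H\triangleleft\pi_1(N)$ of finite solvable index, let $\ti N\to N$ be the corresponding finite cover with $\pi_1(\ti N)=\ti H$, and let $\ti M\to M$ be the pullback cover, so $\pi_1(\ti M)=f_*^{-1}(\ti H)=\ti G$; the lifted map $\ti f\colon\ti M\to\ti N$ is a $\Z$-homology equivalence since $f$ is a covering homology equivalence. Because $\ti M$ and $\ti N$ are aspherical, $H_1(\ti M;\Z)=H_1(\ti G;\Z)$ and $H_1(\ti N;\Z)=H_1(\ti H;\Z)$, and $\ti f_*$ realizes the map $H_1(\ti G;\Z)\to H_1(\ti H;\Z)$ induced by $f_*$; being part of a homology equivalence, it is an isomorphism. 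That verifies the hypothesis of Lemma~\ref{lem:completion}.

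It remains to produce surjectivity of $f_*$. Here I would use a degree argument: a covering homology equivalence in particular induces an isomorphism $H_*(M;\Z)\to H_*(N;\Z)$, so (orienting suitably, or passing to $\Z/2$ if $N$ is non-orientable) $f$ has degree $\pm 1$ and is therefore $\pi_1$-surjective by the standard fact that a degree $\pm 1$ map between closed manifolds is surjective on fundamental groups. With surjectivity and residual finite solvability in hand, the final clause of Lemma~\ref{lem:completion} gives that $f_*$ is injective as well, hence bijective, proving the claim. I expect the main obstacle to be the bookkeeping around orientability and the degree-one/$\pi_1$-surjectivity step — making sure the homology-equivalence hypothesis genuinely forces degree $\pm1$ (including the non-orientable case via $\Z/2$ coefficients, where one still gets $\pi_1$-surjectivity) — rather than the application of Lemma~\ref{lem:completion}, which is essentially mechanical once the hypotheses are checked. (After the claim, one finishes the theorem by invoking the fact that a $\pi_1$-isomorphism between closed aspherical $3$-manifolds is homotopic to a homeomorphism — here citing the relevant rigidity/geometrization input, as $M,N$ are graph manifolds or more generally Haken — but that is beyond the stated claim.)
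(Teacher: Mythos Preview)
Your overall approach is the same as the paper's: check the hypotheses of Lemma~\ref{lem:completion} and conclude. The homological hypothesis follows exactly as you say (asphericity is not even needed there, since $H_1$ of any connected space agrees with $H_1$ of its fundamental group), and in the orientable case the paper argues just as you do---the isomorphism on $H_3(\,\cdot\,;\Z)$ forces $M$ to be orientable too, and then a degree~$\pm1$ map is $\pi_1$-surjective.

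The gap is in the non-orientable case. Being an isomorphism on top $\Z/2\Z$-homology does \emph{not} force $\pi_1$-surjectivity: any odd-sheeted covering $p\colon\ti N\to N$ of a closed manifold induces an isomorphism on $H_n(\,\cdot\,;\Z/2\Z)$ while $p_*$ is a proper inclusion on $\pi_1$. The paper instead passes to the orientation double cover $\widehat N\to N$; the induced lift $\widehat f\colon\widehat M\to\widehat N$ is again a covering homology equivalence, and the isomorphism $H_3(\widehat M;\Z)\cong H_3(\widehat N;\Z)=\Z$ forces $\widehat M$ to be orientable as well, so the orientable case (together with Lemma~\ref{lem:completion}) shows $\widehat f_*$ is an isomorphism on $\pi_1$. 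A $5$-lemma argument applied to the two extensions $1\to\pi_1(\widehat{\,\cdot\,})\to\pi_1(\,\cdot\,)\to\Z/2\Z\to 1$ then gives that $f_*$ is an isomorphism.
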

\begin{proof}
First assume that $N$ is orientable. Since $H_3(M;\Z)\to H_3(N;\Z)$ is an isomorphism
we see that $M$ is also orientable. It is well-known (cf.~\cite[Lemma~15.12]{He76}) that this implies that $f_*\colon\pi_1(M)\to \pi_1(N)$ is surjective. By the previous  lemma $f_*$ is in fact an isomorphism.

Now suppose that $N$ is not orientable.
Let
$\phi\colon\pi_1(N)\to \Z/2\Z$ be a morphism such that the corresponding cover  $\widehat{N}$ of $N$ is orientable, and let $\widehat{f}\colon\widehat{M}\to \widehat{N}$ be the induced map. Clearly $\widehat{f}$ is also a covering homology equivalence
and we deduce from the above that $\widehat{f}_*\colon\pi_1(\widehat{M})\to \pi_1(\widehat{N})$ is an isomorphism.
Consider the following commutative diagram:
\[ \xymatrix{ 1\ar[r] & \pi_1(\widehat{M})\ar[d]^{\widehat{f}_*}\ar[r] & \pi_1(M)\ar[r]\ar[d]^{f_*} & \Z/2\Z \ar[d]^=\ar[r] & 1\\
1\ar[r] & \pi_1(\widehat{N})\ar[r] & \pi_1(N)\ar[r]^\phi & \Z/2\Z \ar[r] & 1}\]
It now follows from the $5$-Lemma that $f_*\colon\pi_1(M)\to \pi_1(N)$ is an isomorphism.
\end{proof}


The following proposition now concludes the proof of the theorem.

\begin{proposition}
Let $M$ and $N$ be 
closed and  irreducible $3$-manifolds with infinite fundamental group.
Let $f\colon M\to N$ be a continuous map which induces an isomorphism on fundamental groups.
Then $f$ is homotopic to a homeomorphism.
\end{proposition}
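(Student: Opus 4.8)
The plan is to first upgrade $f$ to a homotopy equivalence, and then to invoke topological rigidity of aspherical $3$-manifolds.

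\emph{Step 1: asphericity.} As is well known, a closed irreducible $3$-manifold with infinite fundamental group is aspherical, and I would record the argument briefly. Since $\pi_1(M)$ is infinite, $M$ is not a spherical space form, so by the Sphere Theorem (and, in the non-orientable case, the Projective Plane Theorem) $\pi_2(M)=0$ --- a nontrivial class would be carried by an embedded $2$-sphere, which bounds a ball by irreducibility and is hence null-homotopic. Thus the universal cover $\widetilde M$ is a simply connected $3$-manifold with $\pi_2(\widetilde M)=\pi_2(M)=0$, non-compact because $\pi_1(M)$ is infinite. By Hurewicz, $H_1(\widetilde M;\Z)=H_2(\widetilde M;\Z)=0$, and $H_k(\widetilde M;\Z)=0$ for $k\geq 3$ since $\widetilde M$ is a connected non-compact $3$-manifold; so $\widetilde M$ is simply connected and acyclic, hence contractible. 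Therefore $M=K(\pi_1(M),1)$, and the same applies to $N$.

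\emph{Step 2: $f$ is a homotopy equivalence.} Since $M$ and $N$ are aspherical CW complexes, homotopy classes of maps between them correspond to conjugacy classes of homomorphisms $\pi_1(M)\to\pi_1(N)$. I would choose $g\colon N\to M$ realizing $(f_*)^{-1}$; then $g\circ f$ and $f\circ g$ induce the identity up to conjugacy on fundamental groups, hence are homotopic to $\mathrm{id}_M$ and $\mathrm{id}_N$ respectively. So $f$ is a homotopy equivalence; in particular it is a $\Z$-homology equivalence, so $M$ is orientable exactly when $N$ is.

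\emph{Step 3: rigidity.} What remains is to apply the fact that a homotopy equivalence between closed aspherical $3$-manifolds is homotopic to a homeomorphism. This is the three-dimensional Borel conjecture, which holds as a consequence of the Geometrization Theorem: for Haken manifolds it is Waldhausen's theorem, closed hyperbolic manifolds are handled by Mostow rigidity together with the topological rigidity of hyperbolic $3$-manifolds, Seifert fibered spaces are classical, and the general case is assembled along the JSJ/geometric decomposition. If one prefers to cite only the orientable version, one reduces to it by passing to the orientation double covers $\widehat M\to M$ and $\widehat N\to N$: the lift $\widehat f$ exists because $f^{*}w_1(N)=w_1(M)$ (by Wu's formula and the homotopy invariance of $\mathrm{Sq}^{1}$), it is again a homotopy equivalence, and it is equivariant for the deck involutions. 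Applying this to $f$ completes the proof.

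I expect Step 3 to be the real obstacle --- it is a deep theorem to be quoted rather than proved --- whereas Steps 1 and 2 are routine; the only genuine care on our side is the orientability bookkeeping in Step 3.
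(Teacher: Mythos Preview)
Your proof is correct and follows essentially the same route as the paper: first upgrade $f$ to a homotopy equivalence (you argue asphericity explicitly; the paper simply asserts it), then appeal to rigidity. Where you invoke the three-dimensional Borel conjecture as a package, the paper spells out the trichotomy directly---Seifert fibered via Scott \cite{Sc83a} and Orlik \cite{Or72}, $\Z^2\leq\pi_1$ hence Haken via Waldhausen \cite{Wa68}, otherwise hyperbolic by geometrization and Mostow---so no assembly along the JSJ decomposition is actually needed.
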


\begin{proof}
Note that $f$ gives rise to a homotopy equivalence  $f\colon M\to N$. We denote the inverse homotopy equivalence by $g\colon N\to M$.
Scott
\cite{Sc83a} showed that any manifold which is homotopy equivalent to a Seifert fibered manifold is in fact Seifert fibered (cf.~also \cite{CJ94}).
By \cite[Section 5.3,~Theorem 6]{Or72} a homotopy equivalence of Seifert fibered spaces is in fact homotopic to a homeomorphism.
This shows that if one of $M$ or $N$ is a Seifert fibered space, then so is the other, and the homotopy equivalence is homotopic to a homeomorphism
(cf.~also \cite[p.~14]{PS99}).

Now suppose that neither $M$ nor $N$ is a Seifert fibered manifold. By the Torus Theorem (cf.~\cite{Sc80}) $M$ and $N$ contain an embedded torus
if and only if $\pi_1(M)=\pi_1(N)$ contains a subgroup isomorphic to $\Z^2$. If $M$ and $N$ contain an embedded torus, then they are Haken and the conclusion
of the proposition follows from \cite[Corollary~6.5]{Wa68}.

By geometrization the only remaining case is that $M$ and $N$ are hyperbolic.
The proposition now follows from Mostow Rigidity.
\end{proof}


\end{document}